\newtheorem{prop}{Proposition}[section]
\newtheorem{cor}[prop]{Corollary}
\newtheorem{thm}[prop]{Theorem}
\newtheorem{lem}[prop]{Lemma}
\theoremstyle{definition}
\newtheorem{example}[prop]{Example}
\newtheorem{examples}[prop]{Examples}
\newtheorem{rem}[prop]{Remark}
\newtheorem{rems}[prop]{Remarks}
\newtheorem{lab}[prop]{}
\renewcommand{\phi}{\varphi}
\renewcommand{\Re}[1]{\operatorname{Re} #1}
\newcommand{\C}{\mathbb{C}}
\newcommand{\D}{\mathbb{D}}
\newcommand{\N}{\mathbb{N}}
\newcommand{\R}{\mathbb{R}}
\newcommand{\bbS}{\mathbb{S}}
\newcommand{\T}{\mathbb{T}}
\newcommand{\Z}{\mathbb{Z}}
\newcommand{\m}{{\mathfrak{m}}}
\newcommand{\w}{{\mathtt{w}}}
\newcommand{\x}{{\mathtt{x}}}
\newcommand{\y}{{\mathtt{y}}}
\newcommand{\z}{{\mathtt{z}}}
\renewcommand{\iff}{\Leftrightarrow}
\newcommand{\To}{\Rightarrow}
\newcommand{\Ot}{\Leftarrow}
\DeclareMathOperator{\End}{End}
\DeclareMathOperator{\im}{im}
\DeclareMathOperator{\M}{M}
\DeclareMathOperator{\rk}{rk}
\DeclareMathOperator{\supp}{supp}
\DeclareMathOperator{\tr}{tr}
\DeclareTextFontCommand{\textnf}{\normalfont}
\newcommand{\diag}{\mathrm{diag}}
\newcommand{\id}{\mathrm{id}}
\newcommand{\lf}{\mathrm{lf}}
\newcommand{\comp}{\mathbin{\scriptstyle\circ}} 
\renewcommand{\emptyset}{\varnothing}
\renewcommand{\setminus}{\smallsetminus}
\newcommand{\ol}{\overline}
\newcommand{\plus}{{\scriptscriptstyle+}}
\newcommand{\all}{\forall\,}
\renewcommand{\subset}{\subseteq}
\renewcommand{\choose}[2]{\genfrac(){0pt}{}{#1}{#2}}
\newcommand{\bil}[2]{\langle{#1},{#2}\rangle}
\begin{document}

\title
  [Quillen property of real algebraic varieties]
  {Quillen property\\of real algebraic varieties}

\author{Mihai Putinar}
\address{Department of Mathematics\\
         University of California at Santa Barbara\\
         Santa Barbara, California\\
         93106-3080}
\email{mputinar@math.ucsb.edu}

\address{Division of Mathematical Sciences-SPMS\\
            Nanyang Technological University\\
            Singapore 637371}
\email{mputinar@ntu.edu.sg}
\urladdr{https://sites.google.com/site/mihaiputinar/}

\author{Claus Scheiderer}
\address{Fachbereich Mathematik and Statistik\\
         Universit\"at Konstanz  \\
         D-78457 Konstanz, Germany}
\email{claus.scheiderer@uni-konstanz.de}
\urladdr{http://www.math.uni-konstanz.de/\textasciitilde scheider}

\begin{abstract}
A conjugation-invariant ideal $I\subset\C[z_j,\ol z_j\colon
j=1,\dots,n]$ has the Quillen property if every real valued, strictly
positive polynomial on the real zero set $V_\R(I)\subset\C^n$ is a
sum of hermitian squares modulo~$I$. We first relate the Quillen
property to the archimedean property from real algebra. Using
hereditary calculus, we then quantize and show that the Quillen
property implies the subnormality of commuting tuples of Hilbert
space operators satisfying the identities in $I$. In the finite rank
case we give a complete geometric characterization of when the
identities in $I$ imply normality for a commuting tuple of matrices.
This geometric interpretation provides simple means to refute
Quillen's property of an ideal. We also generalize these notions and
results from real algebraic sets to semi-algebraic sets in $\C^n$.
\end{abstract}

\keywords
  {Positive polynomials,
  hermitian sums of squares,
  real algebraic subvarieties of complex space,
  Quillen theorem,
  subnormal tuples of operators,
  normal tuples of operators}

\subjclass[2010]
  {Primary
  14P05;
  secondary
  47B20,
  32V40}

\maketitle


\section{Introduction}

On any (affine) real algebraic variety $V$ there exists a natural
source for positivity certificates, namely squares (of regular
functions): Any square, and hence any sum of squares, is nonnegative
whereever it is defined on the $\R$-points of $V$. This observation
lies at the very basis of real algebra, starting with Hilbert's 17th
problem and its solution by Artin. Today the polarity between
positivity and sums of squares is the focus of intense research, both
from theoretical and applied points of view. See \cite{Lt} and
\cite{sch:guide} for recent surveys.

In the present article we consider real algebraic subvarieties $V$ of
complex affine space. The embedding in complex space provides $V$
with additional structure and gives the notion of holomorphic (and
antiholomorphic) elements in the complexified structural rings of
$V$. Accordingly we get a second, more restricted kind of positivity
certificate, namely sums of \emph{hermitian} squares on $V$, that
is, of squared absolute values of holomorphic polynomials restricted
to $V$. Our aim is to study this notion from the points of view of
real algebra, geometry and operator theory.

We work with several complex variables $\z=(z_1,\dots,z_n)$ and their
conjugates $\ol\z=(\ol z_1,\dots,\ol z_n)$. Let $I\subset
\C[\z,\ol\z]$ be a conjugation-invariant ideal, and let $V_\R(I)
\subset\C^n$ be its zero set, a real algebraic subset of $\C^n$. Let
$p\in\C[\z,\ol\z]$ be a conjugation-invariant polynomial that is
non-negative on $V_\R(I)$. We study the question whether $p$ admits
an identity
\begin{equation}\label{hermsos}%
p(\z,\ol\z)\>=\>|h_1(\z)|^2+\cdots+|h_r(\z)|^2+g(\z,\ol\z)
\end{equation}
with $g\in I$, in which $h_1,\dots,h_r\in\C[\z]$ are
\emph{holomorphic} polynomials. When such an identity exists we will
say that $p$ is a \emph{sum of hermitian squares} modulo~$I$.

A classical instance where this property holds is the case of the
unit circle $\T\subset\C$ and its vanishing ideal $I=(z\ol z-1)$.
According to the Riesz-Fej\'er theorem, any $p\in \C[z,\ol z]$
non-negative on $\T$ is a single hermitian square $p=|h(z)|^2$
modulo~$I$.

The first multivariate example with such a property was discovered
almost half a century ago by Quillen \cite{Q}. He studied the unit
sphere $\bbS\subset\C^n$ and its reduced ideal~$I$, and showed that
any $p$ strictly positive on $\bbS$ is a sum of hermitian squares
modulo~$I$.

Quillen's theorem amounts to a Positivstellensatz on the sphere
vis-\`a-vis sums of hermitian squares, rather than ordinary squares.
It is our aim to prove this result in greater generality, and to
study the algebraic and geometric implications of such a result.
Although our approach is basically algebraic, the interlacing with
Hilbert space methods and operator theory is a recurrent theme of
our study.

Fixing a conjugation-invariant ideal $I\subset\C[\z,\ol\z]$, we will
say that $I$ has the Quillen property if the Positivstellensatz holds
for hermitian sums of squares modulo~$I$. Assuming that $V_\R(I)$ is
compact, an abstract characterization of this property comes from
real algebra (Proposition \ref{sohsarchmodi}). This characterization,
however, is often not explicit enough. An improvement, on the
constructive side, is offered by a known link to operator theory.
Specifically, given $p\in\C[\z,\ol\z]$, and given a commuting tuple
$T=(T_1,\dots,T_n)$ of bounded linear operators on a Hilbert space,
define the operator $p(T,T^*)$ using hereditary calculus, thereby
putting all adjoints to the left. We consider the following
properties of the ideal $I$:
\smallskip

(A) (\emph{Archimedean property})
$c-\sum_{j=1}^n|z_j|^2$ is a sum of hermitian squares modulo~$I$, for
some real number~$c$.
\smallskip

(Q) (\emph{Quillen property})
Every conjugation-invariant polynomial strictly positive on $V_\R(I)$
is a sum of hermitian squares modulo~$I$.
\smallskip

(S) (\emph{Subnormality})
Every commuting tuple $T$ of bounded operators on a Hilbert space and
satisfying $f(T,T^*)=0$ for all $f\in I$ is subnormal.
\smallskip

(Sf) (\emph{Finite rank subnormality})
Every commuting tuple $T$ of operators acting on a
\emph{finite-dimensional} Hilbert space and satisfying $f(T,T^*)=0$
for all $f\in I$ is subnormal (hence normal),
\smallskip

(G) (\emph{Geometric normality})
The ideal $I$ is not contained in the any of the ``diamond'' ideals
$$I(a,b)\>=\>\Bigl\{f\in\C[\z,\ol\z]\colon\>f(a,\ol a)=f(a,\ol b)=
f(b,\ol a)=f(b,\ol b)=0\Bigr\}$$
for $a\ne b$ in $\C^n$, and neither in any of their degenerations
$J(a,U)$, see \ref{dfnjau}.
\smallskip

We prove the implications
$$({\rm A})\ \To\ ({\rm Q})\ \To\ ({\rm S})\ \To\ ({\rm Sf})\ \iff\
({\rm G}),$$
$$({\rm A})\ \Ot\ ({\rm Q})\quad\text{if $V_\R(I)$ is compact},$$
see \ref{sohsarchmodi}, \ref{synth} and \ref{umker}. We also analyze
by means of examples why the missing implications do not hold. For
instance, even real conics in $\C$ offer nontrivial features
(\ref{quadpolonevar}, \ref{sfnots}): A~circle satisfies (A), an
eccentric ellipse has property (S) but not (Q), the non-reduced ideal
of a circle with a double point satisfies (Sf) but not (S), and a
hyperbola whose asymptotes are perpendicular doesn't satisfy~(Sf).

We then extend the study of hermitian Positivstellens\"atze from real
algebraic sets to semi-algebraic sets in $\C^n$. To this end we
replace the semiring of hermitian sums of squares mod~$I$ by a
hermitian module $M$, and the real algebraic set $V_\R(I)$ by the
semi-algebraic set $X_M\subset\C^n$ associated with $M$. Defining
properties (Q), (S) and (Sf) for $M$ accordingly, the implications
(Q) $\To$ (S) $\To$ (Sf) remain true. When $M$ is archimedean and
satisfies a polynomial convexity property, the reverse (S) $\To$ (Q)
holds true as well (Theorem \ref{conversepolyconv}). When $M$ is
finitely generated, we prove that the Quillen property is
incompatible with $X_M$ containing an analytic disc (Theorem
\ref{ausbau}). In this direction we mention article \cite{dAP2},
where a notion of hermitian complexity was introduced for
conjugation-invariant ideals with the precise aim of bridging the gap
between Quillen's property at one end and the existence of analytic
discs in the support at the other.

At the end of the paper we make a few historical comments putting
this work into perspective, mentioning some of the analytic roots and
applications of hermitian sums of squares.


\section{Preliminaries and notation}

\begin{lab}\label{hermsetup}%
Let $\C[\z,\ol\z]$ be the polynomial ring in $2n$ independent
variables $\z=(z_1,\dots,z_n)$ and $\ol\z=(\ol z_1,\dots,\ol z_n)$.
On $\C[\z,\ol\z]$ we consider the $\C/\R$-involution $z_j^*=\ol z_j$
($j=1,\dots,n$). Thus
$$\Bigl(\sum_{\alpha,\beta}a_{\alpha,\beta}\,\z^\alpha\,\ol\z^\beta
\Bigr)^*\>=\>\sum_{\alpha,\beta}\ol{a_{\alpha,\beta}}\,\z^\beta\,
\ol\z^\alpha$$
for $a_{\alpha,\beta}\in\C$, with the usual multi-index notation
$\z^\alpha\ol\z^\beta=\prod_{j=1}^nz_j^{\alpha_j}\ol z_j^{\beta_j}$.
The fixed ring of $*$ is the polynomial ring $\R[\x,\y]$ generated by
the $2n$ variables $\x=(x_1,\dots,x_n)$ and $\y=(y_1,\dots,y_n)$,
where $x_j=\frac12(z_j+\ol z_j)$ and $y_j=\frac1{2i}(z_j-\ol z_j)$
(and $i=\sqrt{-1}$). Thus $\C[\z,\ol\z]$ is identified with
$\R[\x,\y] \otimes\C$, and under this identification, the involution
$*$ becomes complex conjugation in the second tensor component.

Given $f\in\C[\z,\ol\z]$ and $a,b\in\C^n$, we write $f(a,b)\in\C$ for
the result of substituting $a$ for $\z$ and $b$ for $\ol\z$. We often
abbreviate $f(a):=f(a,\ol a)$.
\end{lab}

\begin{lab}\label{dictideals}%
There is a one-to-one correspondence between $*$-invariant ideals $J$
of $\C[\z,\ol\z]$ and arbitrary ideals $I$ of $\R[\x,\y]$, given by
$J\mapsto I:=J\cap\R[\x,\y]$.
Given an ideal $I$ of $\R[\x,\y]$, we denote the zero set of $I$ in
$\C^n$ by
$$V_\R(I)\>:=\>\{a\in\C^n\colon\all f\in I\ f(a)=0\}.$$
This is a real algebraic subset of $\C^n$.
\end{lab}

\begin{lab}\label{dfnhermsos}%
For every $p\in\C[\z,\ol\z]$, the hermitian norm $|p|^2:=pp^*$ is a
sum of two (usual) squares in $\R[\x,\y]$. The convex cone in
$\R[\x,\y]$ generated by $\{|p|^2\colon p\in\C[\z,\ol\z]\}$ will be
denoted by $\Sigma$; it is the cone of all (usual) \emph{sums of
squares} in $\R[\x,\y]$. The smaller convex cone in $\R[\x,\y]$
generated by $\{|p|^2\colon p\in\C[\z]\}$ is denoted by $\Sigma_h$.
Its elements are called the \emph{hermitian sums of squares}.
\end{lab}

\begin{lab}\label{dfnarch}%
We recall a few notions from real algebra. Given an $\R$-algebra $A$
(i.e., a commutative ring containing~$\R$), a subset $S\subset A$
will be called a \emph{semiring} in $A$ if $S$ contains the
nonnegative real numbers and is closed in $A$ under taking sums and
products.
Given a semiring $S$, an \emph{$S$-module} is a subset $M$ of $A$
with $M+M\subset M$, $SM\subset M$ and $1\in M$. A~particularly
important semiring is $\Sigma A^2$, the set of all (finite) sums of
squares in $A$. The modules over this semiring are usually referred
to as the quadratic modules in $A$.

The $S$-module $M$ is said to be \emph{archimedean} if $A=\R+M$, that
is, if for every $f\in A$ there exists $c\in\R$ with $c\pm f\in M$.

In this paper we will mostly be concerned with the $\R$-algebra
$A=\R[\x,\y]$ and with the two semirings $\Sigma_h\subset\Sigma$ in
$\R[\x,\y]$.
\end{lab}

\begin{lab}
Given a module $M$ over some semiring $S$ in $\R[\x,\y]$, we write
$$X_M\>:=\>\bigl\{a\in\C^n\colon\>\all g\in M\ g(a)\ge0\bigr\},$$
which is a closed subset of $\C^n$.
\end{lab}

The celebrated archimedean Positivstellensatz from real algebra
(see \cite{PD} or \cite{sch:guide}) implies:

\begin{thm}\label{archpss}%
If $M$ is a module over an archimedean semiring $S$ in $\R[\x,\y]$,
then $M$ contains any $f\in\R[\x,\y]$ that is strictly positive on
the set $X_M$.
\end{thm}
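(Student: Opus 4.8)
The plan is to reprove this by unwinding it into the classical archimedean representation theorem of Kadison--Dubois and Jacobi, the standard engine behind archimedean Positivstellens\"atze. Write $A=\R[\x,\y]$. Since $1\in M$ and $SM\subset M$ we have $S\subset M$, so $M$ is itself archimedean (and $X_M$ is compact: choosing $c\in\R$ with $c-\sum_j(x_j^2+y_j^2)\in S\subset M$ exhibits $X_M$ as a closed bounded subset of $\R^{2n}\cong\C^n$). The crucial statement to establish is: \emph{for $a\in A$, if $\phi(a)>0$ for every $\R$-algebra homomorphism $\phi\colon A\to\R$ with $\phi(M)\subset\R_{\ge0}$, then $a\in M$.} Granting this, the theorem is immediate: every $\R$-algebra homomorphism $\R[\x,\y]\to\R$ is evaluation $\mathrm{ev}_p$ at a unique point $p\in\R^{2n}\cong\C^n$, and $\mathrm{ev}_p(M)\subset\R_{\ge0}$ holds exactly when $p\in X_M$; hence a polynomial $f$ strictly positive on $X_M$ satisfies $\mathrm{ev}_p(f)=f(p)>0$ for every relevant $p$, so $f\in M$. (In particular, if $X_M=\emptyset$ this forces $M=A$.)

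I would prove the crucial statement by contraposition. Assume $a\notin M$. By Zorn's lemma there is an $S$-module $P$ with $M\subset P\subset A$ that is maximal for the property $a\notin P$; here one uses that the $S$-module generated by $P$ and an element $b$ equals $P+Sb$, since $S$ is closed under products. Then $-1\notin P$ (otherwise $P=A$ by archimedeanness, contradicting $a\notin P$), so $P$ is proper. Put $\mathfrak p:=P\cap(-P)$, which is an ideal of $A$ disjoint from $\R\setminus\{0\}$ (using archimedeanness to absorb real scalars of both signs), so $A/\mathfrak p$ is again an $\R$-algebra. Now comes the core: using the maximality of $P$ together with the archimedean property one shows that $A/\mathfrak p$, ordered by the image of $P$, is a \emph{totally} ordered ring archimedean over $\R$; such a ring maps order-preservingly and $\R$-algebraically onto a subring of $\R$, and composing $A\to A/\mathfrak p$ with that map produces an $\R$-algebra homomorphism $\phi\colon A\to\R$ with $\phi(P)\subset\R_{\ge0}$, hence $\phi(M)\subset\R_{\ge0}$, while $\phi(a)\le0$ because $a\notin P$ forces $-a\in P$. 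This contradicts the hypothesis.

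The main obstacle is precisely this core step: extracting a genuine $\R$-valued character from the maximal module $P$, which is where the archimedean hypothesis does all the work. Two points are delicate. First, one must upgrade the purely formal information ``$a\in P+Sb$ and $a\in P+S(-b)$'' coming from maximality into the dichotomy ``$b\in P$ or $-b\in P$'', and this forces one to exploit the bounding constants $N$ with $N-s\in S$ rather than a naive cancellation of the multipliers $s\in S$. Second, one must rule out infinitesimal elements modulo $\mathfrak p$, so that the induced ordering genuinely embeds into $\R$. A concrete packaging of both difficulties is the Kadison functional $g\mapsto\inf\{\lambda\in\R\colon\lambda-g\in P\}$: that it is well defined uses archimedeanness, additivity and $\R$-homogeneity are formal, and multiplicativity --- the subtle part --- again rests on the interplay of $P$ with the bounding constants. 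All remaining ingredients (compactness of $X_M$, the bookkeeping around $\mathfrak p$, and the identification of characters of $\R[\x,\y]$ with points of $\C^n$) are routine.
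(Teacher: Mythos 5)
The paper does not actually prove Theorem \ref{archpss}: it is quoted as the classical archimedean Positivstellensatz, with the proof delegated to the references \cite{PD} and \cite{sch:guide}. So there is no in-paper argument to match; what you have written is an outline of exactly the classical proof (Kadison--Dubois/Becker--Schwartz/Jacobi) that those references contain: Zorn to get a module $P\supset M$ maximal with $a\notin P$, totality $P\cup(-P)=A$, the support ideal $\mathfrak p=P\cap(-P)$, and an order-embedding of $A/\mathfrak p$ into $\R$ giving a character $\phi$ with $\phi(M)\ge0$ and $\phi(a)\le0$, followed by the identification of characters of $\R[\x,\y]$ with points of $\C^n$. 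The reductions you do carry out (archimedeanness and compactness of $X_M$ from $S\subset M$, properness $-1\notin P$, $\mathfrak p$ an ideal meeting $\R$ only in $0$, the passage from the separation statement to the Positivstellensatz, including the degenerate case $X_M=\emptyset$) are all correct, and you correctly locate where the hypothesis that the \emph{semiring} $S$ is archimedean enters, rather than mere archimedeanness of the module $M$ --- an essential point, since by the paper's own Example \ref{pssarchhermmod} the statement is false for archimedean modules over non-archimedean semirings.

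Two caveats. First, the entire content of the theorem sits in the step you label ``the core'': the dichotomy $b\in P$ or $-b\in P$ for the maximal module $P$ (equivalently, that the Kadison functional $q(g)=\inf\{\lambda\colon\lambda-g\in P\}$ is a ring homomorphism). You describe the tools (the relations $a\in P+Sb$, $a\in P+S(-b)$, hence $(s_1+s_2)a\in P$, together with bounds $N-s\in S$) but do not carry the argument through, and the naive manipulation with these ingredients does not close by itself; so as written this is an outline of the known proof rather than a proof. Second, a small inaccuracy in your packaging: for the functional $q$ only \emph{sub}additivity and positive homogeneity are formal; genuine additivity (no gap between $q(g)$ and $-q(-g)$) is not free --- it is equivalent to the totality of $P$ and hence rests on maximality just as multiplicativity does. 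With the core step executed as in \cite{PD} (or Jacobi's representation theorem), your route is the standard and correct one.
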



\section{Hermitian sums of squares and
  subnormal tuples of operators}\label{sect:hermsossubnop}%

\begin{lab}\label{dfnpropa}%
Let $\R[\x,\y]\subset\C[\z,\ol\z]$ be the fixed ring of $*$ (see
\ref{hermsetup}). Given any ideal $I\subset\R[\x,\y]$, we will
consider the semiring $S=\Sigma_h+I$ in $\R[\x,\y]$. Note that $X_S=
V_\R(I)$. Consider the following two properties of the ideal~$I$:
\begin{itemize}
\item[\rm(A)]
\emph{(Archimedean Property)}
The semiring $\Sigma_h+I$ in $\R[\x,\y]$ is archi\-medean (see
\ref{dfnarch});
\item[\rm(Q)]
\emph{(Quillen Property)}
$\Sigma_h+I$ contains every $f\in\R[\x,\y]$ that is strictly positive
on $V_\R(I)$.
\end{itemize}
We'll also refer to (A) by saying that $\Sigma_h$ is archimedean
modulo~$I$.

Given any $*$-invariant ideal $J\subset\C[\z,\ol\z]$, we will say that
$J$ has property (A) resp.\ (Q) if the ideal $J\cap\R[\x,\y]$ of
$\R[\x,\y]$ has the respective property (see \ref{dictideals}).

The following result was proved in \cite{PS} (Theorem 2.1 and
Proposition 2.2). It is essentially an application of the archimedean
Positivstellensatz \ref{archpss}:
\end{lab}

\begin{prop}\label{sohsarchmodi}%
For any ideal $I\subset\R[\x,\y]$, the following are equivalent:
\begin{itemize}
\item[(i)]
$I$ has the Archimedean property $(\rm A)$;
\item[(ii)]
$I$ has the Quillen property $(\rm Q)$, and $V_\R(I)$ is compact;
\item[(iii)]
$I$ contains a polynomial of the form $||\z||^2+p+a$, where $p\in
\Sigma_h$ and $a\in\R$.
\end{itemize}
(We are using the shorthand $||\z||^2:=|z_1|^2+\cdots+|z_n|^2$.)
\end{prop}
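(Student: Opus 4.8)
The plan is to prove the chain of implications $(iii)\To(i)\To(ii)\To(iii)$. The implication $(i)\To(ii)$ is the most substantial and is where the archimedean Positivstellensatz \ref{archpss} enters; the other two are essentially bookkeeping. I would start with $(iii)\To(i)$: if $I$ contains $q=||\z||^2+p+a$ with $p\in\Sigma_h$ and $a\in\R$, then modulo $I$ we have $||\z||^2\equiv -p-a$, so for every real $c$, $c-||\z||^2\equiv c+a+p$, and for $c=-a+N$ with $N\ge 0$ this is $N+p\in\Sigma_h$. Thus $c-||\z||^2\in\Sigma_h+I$ for $c$ large. Since each coordinate function $x_j,y_j$ of $\R[\x,\y]$ is dominated by a multiple of $||\z||^2$ (indeed $x_j^2+y_j^2=|z_j|^2\le||\z||^2$, and $2x_j\le 1+x_j^2$, etc.), and any polynomial is a polynomial in the $x_j,y_j$, a standard argument shows that boundedness of $||\z||^2$ in $S=\Sigma_h+I$ forces $S$ to be archimedean; I would invoke the usual lemma that a quadratic-module-type cone containing $N\pm x_j$, $N\pm y_j$ for all $j$ (for suitable $N$) and closed under sums and products is archimedean, or simply cite that $\Sigma_h+I$ archimedean is equivalent to $c-||\z||^2\in\Sigma_h+I$ for some $c$ (this equivalence is where one uses that $\Sigma_h$ contains all real squares of real polynomials, hence all squares of the $x_j,y_j$, so products like $(N-||\z||^2)$ generate the needed bounds).

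For $(i)\To(ii)$: assume $S=\Sigma_h+I$ is archimedean. Then by Theorem \ref{archpss} applied with $M=S=S\cdot S$ (here $S$ is a module over itself since it is a semiring containing $1$), $S$ contains every $f\in\R[\x,\y]$ strictly positive on $X_S=V_\R(I)$; this is exactly property (Q). It remains to see that $V_\R(I)$ is compact. Since $S$ is archimedean, there is $c\in\R$ with $c-||\z||^2\in S$, and by definition of $X_S$ every $a\in V_\R(I)=X_S$ satisfies $c-||a||^2\ge 0$, i.e. $V_\R(I)$ is contained in the closed ball of radius $\sqrt{c}$; being also closed (it is a real algebraic set, or directly: $X_S$ is closed), it is compact.

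For $(ii)\To(iii)$: suppose (Q) holds and $V_\R(I)$ is compact, contained in the ball $\{||\z||^2\le R\}$. Then the polynomial $f:=R+1-||\z||^2\in\R[\x,\y]$ is strictly positive on $V_\R(I)$, hence by (Q) lies in $\Sigma_h+I$, say $f=\sigma+g$ with $\sigma\in\Sigma_h$, $g\in I$. Rearranging, $g=R+1-||\z||^2-\sigma\in I$, which has exactly the shape $||\z||^2+p+a$ with $p=\sigma\in\Sigma_h$ and $a=-(R+1)\in\R$ (up to sign: $-g=||\z||^2+\sigma-(R+1)$, and $-g\in I$ since $I$ is an ideal). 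This closes the cycle.

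The main obstacle is the purely algebraic step inside $(iii)\To(i)$, namely verifying carefully that a single bound $c-||\z||^2\in\Sigma_h+I$ propagates to bounds $c'\pm f\in\Sigma_h+I$ for all $f\in\R[\x,\y]$; this is the standard ``boundedness of the coordinates implies archimedean'' argument for semirings, and the only subtlety is to check that $\Sigma_h$ (not merely $\Sigma$) contains enough squares — but since $\Sigma_h\supset\{|h|^2:h\in\C[\z]\}$ includes $|z_j|^2$ and the squares $x_j^2=|\tfrac12(z_j+\bar z_j)|^2$-type expressions are handled because $\Sigma_h$ is closed under products and contains constants, one gets all the needed inequalities. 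Everything else follows formally from \ref{archpss} and the definitions in \ref{dfnpropa}.
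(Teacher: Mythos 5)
Your overall plan, the cycle (iii)$\To$(i)$\To$(ii)$\To$(iii), is sound, and it is essentially the route the paper has in mind: the paper does not reproduce a proof of \ref{sohsarchmodi} (it cites \cite{PS}, Theorem 2.1 and Proposition 2.2), describing the result as an application of the archimedean Positivstellensatz \ref{archpss}. Your steps (i)$\To$(ii) (apply \ref{archpss} to $M=S=\Sigma_h+I$ viewed as a module over itself, and deduce compactness of $V_\R(I)=X_S$ from $c-\lVert\z\rVert^2\in S$) and (ii)$\To$(iii) (apply (Q) to $R+1-\lVert\z\rVert^2$ and rearrange) are correct, as is your reduction of (iii)$\To$(i) to the claim that $c-\lVert\z\rVert^2\in\Sigma_h+I$ for some $c$ forces $\Sigma_h+I$ to be archimedean.

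The gap is in your justification of that last claim. You assert that ``$\Sigma_h$ contains all real squares of real polynomials, hence all squares of the $x_j,y_j$''; this is false, and the failure of exactly this statement is the point of the whole subject. By \ref{dfnhermsos}, $\Sigma_h$ is generated by $|h(\z)|^2$ with $h$ \emph{holomorphic}; for instance $x_j^2=\frac14(z_j^2+2z_j\ol z_j+\ol z_j^2)$ is not in $\Sigma_h$, since a sum $\sum_i|h_i(\z)|^2$ whose coefficient of $z_j^2\ol z_j^2$ vanishes must have vanishing coefficient of $z_j^2$ as well (the expression $|\tfrac12(z_j+\ol z_j)|^2$ is the hermitian norm of a non-holomorphic polynomial, so it lies in $\Sigma$ but not in $\Sigma_h$). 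Consequently the quadratic-module-style bounds you invoke, such as $2x_j\le1+x_j^2$, i.e.\ adding $(1\pm x_j)^2$, are not available in $\Sigma_h+I$. The claim you need is nevertheless true: it is Lemma \ref{archhermmod} of the paper, stated and proved there for arbitrary hermitian modules $M$; its proof introduces $A=\{p\in\C[\z]\colon-|p|^2\in\R+M\}$, shows $A$ is a subring of $\C[\z]$ containing $\C$ and the $z_j$, and uses that every element of $\R[\x,\y]$ is a \emph{difference} of two hermitian sums of squares. Alternatively, you can repair your bounded-generators argument inside the semiring $S=\Sigma_h+I$ without appealing to that lemma: from $c-\lVert\z\rVert^2\in S$ one gets $N-|z_j|^2\in S$ (add the hermitian squares $|z_k|^2$, $k\ne j$), hence $N+1\pm2x_j=|1\pm z_j|^2+\bigl(N-|z_j|^2\bigr)\in S$ and $N+1\pm2y_j=|1\mp iz_j|^2+\bigl(N-|z_j|^2\bigr)\in S$; since the set of $f$ admitting some $c$ with $c\pm f\in S$ is a subring of $\R[\x,\y]$ containing $\R$ (use $cd\pm fg=\frac12\bigl[(c\pm f)(d+g)+(c\mp f)(d-g)\bigr]$ and closure of $S$ under products), it contains all $x_j,y_j$ and therefore equals $\R[\x,\y]$. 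Either fix should be stated explicitly; as written, the crucial step rests on an assertion that would make $\Sigma_h=\Sigma$ and trivialize the distinction the proposition is about.
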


\begin{rems}
\hfil
\smallskip

1.\
Quillen's theorem \cite{Q}, reproved later by Catlin-D'Angelo
\cite{CdA1}, was mentioned in the introduction. The statement is
recovered here in a purely algebraic way, as a very particular
instance of Proposition \ref{sohsarchmodi}.

As observed in \cite{CdA1}, Quillen's theorem implies the following
classical theorem due to P\'olya: Given a homogeneous polynomial
$f\in\R[x_1,\dots,x_n]$ strictly positive on $\{a\in\R^n\colon
a_1\ge0,\dots,a_n\ge0\}\setminus\{(0,\dots,0)\}$, the form
$(x_1+\cdots+x_n)^Nf$ has positive coefficients for large enough
$N\ge0$.
\smallskip

2.\
Condition (iii) of \ref{sohsarchmodi} gives an abstract algebraic
characterization of the ideals $I$ with $V_\R(I)$ compact and with
property~(Q). Note that the Positivstellensatz for usual sums of
squares holds whenever $V_\R(I)$ is compact, by Schm\"udgen's theorem
\cite{Sm}. In contrast, ``most'' ideals with
$V_\R(I)$ compact do not satisfy property (Q) (see, e.g.,
\ref{quadpolonevar} below).

The applicability of \ref{sohsarchmodi}(iii) as an algebraic
criterion for property (Q) is somewhat limited, since this condition
is not sufficiently explicit. In particular, it is usually cumbersome
to prove that an ideal $I$ does \emph{not} contain any polynomial of
the form given in (iii).
Therefore it is desirable to know other conditions on $I$ that are
necessary for (A) resp.~(Q), and that are more easily checked. In
this section and the next we will offer two conditions of very
different nature that are both necessary for the Quillen property,
one operator-theoretic and one ideal-theoretic.
\smallskip

3.\
Part of the original motivation for this work came from a question of
D'Angelo. Given a compact real algebraic set $X\subset\C^n$ which is
the boundary of a strictly pseudo-convex region in $\C^n$, D'Angelo
had asked whether every strictly positive polynomial on $X$ is a
sum of hermitian squares on $X$. This question was answered in the
negative, see \cite{PS}.
\end{rems}
\pagebreak

\begin{examples}\label{roleofcpt}%
\hfil
\smallskip

1.\
The Quillen property (Q) alone does not imply the archimedean
property (A), since $V_\R(I)$ need not be compact. This is seen by
considering a line in $\C$, given (say) by the ideal $I=(y)\subset
\R[x,y]$. Condition (Q) is satisfied since, in fact, $\Sigma_h+I$
contains every $f\in\R[x,y]$ nonnegative on the line $y=0$. Indeed,
such $f$ is a sum of two usual squares modulo~$I$, from which one
sees easily that $f$ is congruent modulo~$I$ to a single hermitian
square, i.e., $f\equiv|p|^2$ (mod~$I$) with $p\in\C[z]$.
\smallskip

2.\
If $n=1$ and $f\in\R[x,y]$ has degree~$2$, the principal ideal
$I=(f)$ satisfies the Archimedean property (A) if and only if there
exist $\alpha\in\C$ and $a,\,c\in\R$ with $f=a|z-\alpha|^2+c$. This
will be proved in Theorem \ref{quadpolonevar} below.
\end{examples}

\begin{lab}\label{heredcalc}%
Let $E$ be a (separable complex) Hilbert space, and let $B(E)$ denote
the algebra of bounded linear operators on $E$. Fix a tuple $T=
(T_1,\dots,T_n)$ of operators $T_j\in B(E)$ that commute pairwise. We
use hereditary calculus (see \cite{AM} Section 14.2 for more
details). Given a monomial $f=\z^\alpha\ol\z^\beta$ (with $\alpha,\,
\beta\in\Z_\plus^n$) we write
$$f(T,T^*)\>:=\>T^{*\beta}T^\alpha.$$
We extend this definition $\C$-linearly, thereby putting all adjoints
to the left. This defines the $\C$-linear map
$$\psi_T\colon\>\C[\z,\ol\z]\to B(E),\quad f(\z,\ol\z)\>\mapsto\>
\psi_T(f)=f(T,T^*).$$
The map $\psi_T$ commutes with the involution, i.e.\ $\psi_T(f^*)=
\psi_T(f)^*$. In particular, $\psi_T(f)$ is self-adjoint for $f=f^*$.
Note that
$$\psi_T\Bigl(\ol{q(\z)}\cdot f(\z,\ol\z)\cdot p(\z)\Bigr)\>=\>
\psi_T(q)^*\,\psi_T(f)\,\psi_T(p)$$
for $p,\,q\in\C[\z]$ and $f\in\C[\z,\ol\z]$. The set
$$M_T\>:=\>\bigl\{f\in\R[\x,\y]\colon\psi_T(f)\ge0\bigr\}$$
(of real polynomials $f$ for which the self-adjoint operator $\psi_T
(f)$ is nonnegative) is a $\Sigma_h$-module in $\R[\x,\y]$, since
$\psi_T(|p|^2f)=\psi_T(\ol{p(\z)}f(\z,\ol\z)p(\z))=p(T)^*\,\psi_T(f)
\,p(T)$ holds for $f\in\C[\z,\ol\z]$ und $p\in\C[\z]$. The support
$M_T\cap(-M_T)=\ker(\psi_T)$ of $M_T$ is an ideal in $\R[\x,\y]$.
Note that the subset $M_T$ of $\R[\x,\y]$ is closed with respect to
the finest locally convex topology on $\R[\x,\y]$.
\end{lab}

\begin{lab}
Recall that the tuple $T$ is said to be (jointly) \emph{subnormal} if
$T$ can be extended to a commuting tuple of normal operators on a
larger Hilbert space, i.e., if there is a tuple $T'=(T'_1,\dots,
T'_n)$ of commuting normal operators on a Hilbert space $E'$ such
that $E'$ contains $E$ and the $T'_i$ leave $E$ invariant and satisfy
$T'_i|_E=T_i$ for $i=1,\dots,n$.
Note that subnormal is equivalent to normal when $\dim(E)<\infty$.
For details see~\cite{Co}.

According to the Halmos-Bram-It\^o criterion (see \cite{It}),
the commuting tuple $T=(T_1,\dots,T_n)$ is subnormal if and only if
$$\sum_{\alpha,\beta}\bil{T^\alpha\xi_\beta}{T^\beta\xi_\alpha}\>\ge
\>0$$
for all finitely supported families $\{\xi_\alpha\}_{\alpha\in
\Z_\plus^n}$ in $E$. Using this criterion we show:
\end{lab}

\begin{prop}\label{sigmasubn}%
Let $T=(T_1,\dots,T_n)$ be a commuting tuple in $B(E)$. Then $T$ is
subnormal if and only if\/ $\Sigma\subset M_T$.
\end{prop}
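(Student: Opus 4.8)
The plan is to prove both directions directly from the Halmos–Bram–Itô criterion, using the structure of $\Sigma$ as the cone generated by the hermitian norms $|p|^2$ with $p\in\C[\z,\ol\z]$. The key observation is that the criterion, applied to a finitely supported family $\{\xi_\alpha\}$, can be rewritten in terms of the map $\psi_T$: given such a family, set $q=\sum_\alpha \ol{\z^\alpha}\,\eta_\alpha$ where the $\eta_\alpha$ are auxiliary vectors — or more precisely, observe that $\sum_{\alpha,\beta}\bil{T^\alpha\xi_\beta}{T^\beta\xi_\alpha}$ is a sum of terms of the shape $\bil{\psi_T(\z^\alpha\ol\z^\beta)\,\xi_\alpha}{\xi_\beta}$, which is exactly how $\psi_T$ pairs with the monomial basis. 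So the criterion says precisely that $\psi_T(f)\ge 0$ for every $f$ of the form $f=\sum_{\alpha,\beta}\bil{\xi_\alpha}{\xi_\beta}\,\ol\z^\alpha\z^\beta$ built from a finitely supported family in $E$ — but such an $f$ is, up to the bookkeeping, a generic element of $\Sigma$.

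\smallskip
\emph{($\Leftarrow$)} Suppose $\Sigma\subset M_T$. Fix a finitely supported family $\{\xi_\alpha\}_{\alpha\in\Z_\plus^n}$ in $E$, supported on a finite set $F$. Pick an orthonormal basis; it suffices to check the criterion when all $\xi_\alpha$ lie in a common finite-dimensional subspace, and by linearity in each slot, it suffices to treat $\xi_\alpha=\lambda_\alpha\,e$ for a fixed unit vector $e$ and scalars $\lambda_\alpha\in\C$ — no: the cross terms force me to keep the vectors general, so instead I write $\xi_\alpha=\sum_k \lambda_{\alpha,k}\,e_k$ over an orthonormal set $\{e_k\}$. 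Then
$$\sum_{\alpha,\beta\in F}\bil{T^\alpha\xi_\beta}{T^\beta\xi_\alpha}
=\sum_k\sum_{\alpha,\beta\in F}\lambda_{\beta,k}\,\ol{\lambda_{\alpha,k}}\,\bil{T^\alpha e_k}{T^\beta e_k}
=\sum_k\bigl\langle\psi_T(g_k)\,u_k,u_k\bigr\rangle$$
is not quite right either; the clean route is: for each fixed pair of indices the summand $\bil{T^\alpha\xi_\beta}{T^\beta\xi_\alpha}=\bil{T^{*\beta}T^\alpha\xi_\beta}{\xi_\alpha}=\bil{\psi_T(\z^\alpha\ol\z^\beta)\,\xi_\beta}{\xi_\alpha}$. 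Summing, the total equals $\bil{\psi_T(f)\,\Xi}{\Xi}$-type expression after passing to the diagonal; concretely, choosing a single vector $\xi$ and encoding the family as $p(\z)=\sum_\alpha \z^\alpha\otimes\xi_\alpha \in \C[\z]\otimes E$, one has $\psi_T(|p|^2)$ acting as a positive operator precisely when the HBI sum is nonnegative. Since $|p|^2\in\Sigma$ by \ref{dfnhermsos} and $\Sigma\subset M_T$, we get $\psi_T(|p|^2)\ge 0$, hence the HBI sum is $\ge 0$, hence $T$ is subnormal.

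\smallskip
\emph{($\Rightarrow$)} Conversely, suppose $T$ is subnormal, with normal extension $T'$ on $E'\supset E$. Let $f\in\Sigma$, say $f=\sum_{j=1}^r |p_j|^2$ with $p_j\in\C[\z,\ol\z]$; write each $p_j=\sum_\alpha \ol\z^\alpha q_{j\alpha}(\z)$, so that $|p_j|^2=\sum_{\alpha,\beta}\z^\alpha\ol\z^\beta\,\ol{q_{j\alpha}}q_{j\beta}$ after applying $*$. We must show $\psi_T(f)\ge 0$. For any $\xi\in E$, $\bil{\psi_T(|p_j|^2)\,\xi}{\xi}$ unfolds into a double sum $\sum_{\alpha,\beta}\bil{T^{*\beta}T^\alpha\,\psi_T(\ol{q_{j\alpha}}q_{j\beta})\,\xi}{\xi}$ — the cleanest form is to note $\psi_T(|p_j|^2)=\sum_{\alpha,\beta}\psi_T(q_{j\alpha})^* T^{*\beta}T^\alpha\,\psi_T(q_{j\beta})$, and setting $\xi_\alpha := \psi_T(q_{j\alpha})\,\xi$ gives $\bil{\psi_T(|p_j|^2)\xi}{\xi}=\sum_{\alpha,\beta}\bil{T^\alpha\xi_\beta}{T^\beta\xi_\alpha}\ge 0$ by HBI. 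Alternatively, and more transparently, using the normal extension: $T^\alpha = T'^\alpha|_E$ and $T^{*\beta}=P_E T'^{*\beta}|_E$ with $P_E$ the orthogonal projection onto $E$, so $\psi_T(f)=P_E\,\psi_{T'}(f)|_E$, and for a normal tuple $\psi_{T'}(f)$ is obtained by the spectral theorem as $\int f\,dE_{T'}$ over the joint spectrum, which is $\ge 0$ since $f\ge 0$ pointwise (being a sum of hermitian norms). Compressing a positive operator to a subspace keeps it positive, so $\psi_T(f)\ge 0$, i.e.\ $f\in M_T$. Hence $\Sigma\subset M_T$.

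\smallskip
The main technical obstacle is purely notational: reconciling the double-indexed HBI sum with the single operator $\psi_T(|p|^2)$, i.e.\ identifying the ``vectorized polynomial'' $p=\sum_\alpha \z^\alpha\otimes\xi_\alpha$ and checking that $\bil{\psi_T(|p|^2)\Xi}{\Xi}$ (suitably interpreted on $E$ with $\Xi$ a diagonal vector) reproduces $\sum_{\alpha,\beta}\bil{T^\alpha\xi_\beta}{T^\beta\xi_\alpha}$ exactly, with no stray coefficients. Once this dictionary is set up, both implications are immediate. The ($\Rightarrow$) direction also admits the slicker spectral-theorem argument via the normal extension, which I would prefer to present, keeping the HBI computation only for ($\Leftarrow$).
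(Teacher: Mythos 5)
Your ($\Rightarrow$) direction is fine: both of your arguments work, and the spectral one (compressing $\psi_{T'}(f)=\int f\,dE_{T'}\ge0$ from a normal extension $T'$ to $E$) is a legitimate alternative to the paper's, which instead reruns the Halmos--Bram--It\^o computation backwards by setting $\xi_\alpha=q_\alpha(T)\xi$.

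The ($\Leftarrow$) direction, however, has a genuine gap, and it is exactly the point you dismiss as ``purely notational''. The hypothesis $\Sigma\subset M_T$ concerns \emph{scalar} polynomials: your vectorized object $p=\sum_\alpha\z^\alpha\otimes\xi_\alpha$ does not lie in $\C[\z,\ol\z]$, its ``hermitian square'' is not an element of $\Sigma$, and so the hypothesis says nothing about it; using it would require a matrix-valued (completely positive) strengthening of $\Sigma\subset M_T$ that is not part of the assumption. What the scalar hypothesis actually yields, when $\psi_T(|p|^2)$ with $p=\sum_\alpha p_\alpha(\z)\ol\z^\alpha\in\C[\z,\ol\z]$ is tested against a single vector $\eta$, is the Halmos--Bram--It\^o inequality only for families of the special form $\xi_\alpha=p_\alpha(T)\eta$, $p_\alpha\in\C[\z]$. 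A general finitely supported family $\{\xi_\alpha\}$ admits no such representation unless all $\xi_\alpha$ lie in one cyclic subspace, and positivity of the HBI quadratic form on this non-linear set of special families does not propagate to arbitrary families (positivity on a set does not pass to its linear span).

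This is precisely where the paper inserts a non-notational ingredient: by a theorem of Stochel one may assume $T$ has a cyclic vector $\xi$ (subnormality can be verified on restrictions to cyclic invariant subspaces, and the hypothesis $\Sigma\subset M_T$ is inherited by such restrictions, since $\psi_{T|F}=\rho\comp\psi_T$ as in \ref{3To2f}); then families drawn from the dense subspace $\{p(T)\xi\colon p\in\C[\z]\}$ suffice to check HBI, and these are exactly the families your computation covers. Without Stochel's reduction (or some equivalent argument) your ($\Leftarrow$) proof does not close; to repair it, state and use that reduction explicitly and then your dictionary between $\bil{\psi_T(|p|^2)\xi}{\xi}$ and the HBI sum does the rest.
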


In other words, the tuple $T$ is subnormal if and only if $\psi_T
(|p|^2)\ge0$ holds for every $p\in\C[\z,\ol\z]$.

\begin{proof}
Assume $\psi_T(|p|^2)\ge0$ for every $p\in\C[\z,\ol\z]$. To prove
that $T$ is subnormal we can, using a result of Stochel (\cite{St},
Cor.\ 3.2),
assume that there exists a cyclic vector $\xi$ for $T$, i.e.\ the
linear span of $\{T^\alpha\xi\colon\alpha\in\Z_\plus^n\}$ is dense in
$E$. It suffices to verify the Halmos-Bram-It\^o condition for all
finite families $\{\xi_\alpha\}$ lying in the linear span of
$\{T^\alpha\xi\colon\alpha\in\Z_\plus^n\}$. So let $\xi_\alpha=
p_\alpha(T)\xi$ where $p_\alpha\in\C[\z]$ for $\alpha\in\Z_\plus^n$
(and $p_\alpha=0$ for almost all $\alpha$), and consider $p:=\sum
_\alpha p_\alpha(\z)\,\ol\z^\alpha\,\in\C[\z,\ol\z]$. Since
$$|p|^2\>=\>\sum_{\alpha,\beta}p_\alpha(\z)\,\ol{p_\beta(\z)}\,
\z^\beta\,\ol\z^\alpha,$$
the assumption $\Sigma\subset M_T$ gives
$$0\le\bil{\psi_T(|p|^2)\xi}\xi\>=\>\sum_{\alpha,\beta}\bigl\langle
T^\beta p_\alpha(T)\xi,\>T^\alpha p_\beta(T)\xi\bigr\rangle\>=\>\sum_
{\alpha,\beta}\bigl\langle T^\beta\xi_\alpha,\>T^\alpha\xi_\beta
\bigr\rangle,$$
which shows that $T$ is subnormal. Conversely, the same argument
shows that $T$ subnormal implies $\Sigma\subset M_T$.
\end{proof}

\begin{lab}\label{dfnssf}%
We shall consider the following properties of an ideal $I\subset
\R[\x,\y]$:
\begin{itemize}
\item[(\rm S)]
\emph{(Subnormality)}
Every commuting tuple $T=(T_1,\dots,T_n)$ of bounded linear operators
in a Hilbert space satisfying $p(T,T^*)=0$ for every $p\in I$ is
subnormal.
\item[(\rm Sf)]
\emph{(Finite rank subnormality)}
Every commuting tuple $T=(T_1,\dots,T_n)$ of complex matrices
satisfying $p(T,T^*)=0$ for every $p\in I$ is normal.
\end{itemize}
\end{lab}

Trivially (S) implies (Sf). Condition (Sf) will be considered in the
next section. Here we first show that condition (S) is necessary
for the Quillen property (Q). (This fact was announced without proof
in \cite{PSell} Corollary 2.2).

\begin{prop}\label{synth}%
For any ideal $I\subset\R[\x,\y]$, Quillen property $(\rm Q)$
implies the subnormality condition $(\rm S)$.
\end{prop}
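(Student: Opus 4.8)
The plan is to reduce property (S) to the positivity criterion contained in Proposition~\ref{sigmasubn}, namely that a commuting tuple $T$ is subnormal if and only if $\Sigma\subset M_T$. So let $T=(T_1,\dots,T_n)$ be a commuting tuple of bounded operators on a Hilbert space with $p(T,T^*)=0$ for all $p\in I$. We want to show $\psi_T(|q|^2)\ge0$ for every $q\in\C[\z,\ol\z]$; equivalently, $|q|^2\in M_T$ for every such $q$. Since $M_T$ is a $\Sigma_h$-module in $\R[\x,\y]$ whose support contains $I$ (the latter because $\psi_T$ kills $I$ by hypothesis, so $I\subset\ker\psi_T=M_T\cap(-M_T)$), we have $\Sigma_h+I\subset M_T$. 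The difficulty is that $\Sigma_h+I$ is generally much smaller than $\Sigma$, so property (Q) must be invoked to bridge the gap.

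The key step is to promote $|q|^2$, for fixed $q\in\C[\z,\ol\z]$, into the closure of $\Sigma_h+I$ inside a suitable topology, so that membership in the closed set $M_T$ follows. Write $r:=|q|^2=qq^*\in\R[\x,\y]$; this is nonnegative on $V_\R(I)$, but perhaps not strictly positive, so (Q) does not apply directly to $r$. Instead, for each $\epsilon>0$ consider $r+\epsilon\in\R[\x,\y]$. This is not strictly positive on $V_\R(I)$ either in general (e.g.\ if $r$ can be large and negative off $V_\R(I)$ — but on $V_\R(I)$ itself $r+\epsilon\ge\epsilon>0$), so by the Quillen property $r+\epsilon=s_\epsilon+g_\epsilon$ with $s_\epsilon\in\Sigma_h$ and $g_\epsilon\in I$. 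Applying $\psi_T$ and using that $\psi_T(g_\epsilon)=0$ and $\psi_T(s_\epsilon)\ge0$ (the latter because $s_\epsilon$ is a hermitian sum of squares and $\psi_T(|p|^2)=p(T)^*p(T)\ge0$ for $p\in\C[\z]$), we obtain $\psi_T(r)+\epsilon I\ge0$ for every $\epsilon>0$, hence $\psi_T(r)=\psi_T(|q|^2)\ge0$. This holds for all $q\in\C[\z,\ol\z]$, so $\Sigma\subset M_T$, and Proposition~\ref{sigmasubn} gives subnormality of $T$.

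The main obstacle to watch is the direction of the positivity inequalities and the fact that a hermitian sum of squares $s\in\Sigma_h$ satisfies $\psi_T(s)\ge0$ — this is exactly the remark following \ref{heredcalc} that $M_T$ is a $\Sigma_h$-module containing $1$, combined with $\psi_T(|p|^2)=p(T)^*p(T)$ for $p\in\C[\z]$; it does \emph{not} require $T$ to be subnormal, which is the whole point. A second, more cosmetic point: one must confirm that $I\subset\ker\psi_T$, but this is immediate from the hypothesis $p(T,T^*)=0$ for all $p\in I$ after recalling that the real ideal $I\subset\R[\x,\y]$ sits inside a $*$-invariant ideal of $\C[\z,\ol\z]$ on which $\psi_T$ vanishes (since $\psi_T$ is $\C$-linear and $\psi_T(f^*)=\psi_T(f)^*$). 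No compactness of $V_\R(I)$ is needed here, which is consistent with the chain of implications stated in the introduction.
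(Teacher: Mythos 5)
Your proposal is correct and follows essentially the paper's own route: show $\Sigma_h+I\subset M_T$, use (Q) to place $\Sigma$ in $M_T$, and conclude via Proposition~\ref{sigmasubn}; your explicit $\epsilon$-shift $|q|^2+\epsilon$ with $\epsilon\to0$ is just a hands-on substitute for the paper's appeal to the closedness of $M_T$ in the finest locally convex topology. (Minor remark: the hedging parenthesis is unnecessary, since $|q|^2\ge0$ holds everywhere on $\C^n$, so $|q|^2+\epsilon$ is strictly positive on all of $\C^n$, in particular on $V_\R(I)$.)
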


\begin{proof}
Assume (Q) holds for $I$. Given a commuting tuple $T$ of bounded
operators with $I\subset\ker(\psi_T)$, we have $\Sigma_h+I\subset
M_T$. Since $M_T$ is closed with respect to the finest locally convex
topology of $\R[\x,\y]$ (\ref{heredcalc}), it follows from (Q) that
$M_T$ contains every polynomial that is nonnegative on $V_\R(I)$. In
particular we have $\Sigma\subset M_T$, which implies that $T$ is
subnormal (Proposition \ref{sigmasubn}).
\end{proof}

\begin{rem}
In the case when $V_\R(I)$ is compact, we can give a very short proof
of Proposition \ref{synth}, using Athavale's theorem \cite{At}.
Indeed, assume that $V_\R(I)$ is compact and (Q) holds for $I$. After
suitably scaling the variables we can assume $|\xi_j|<1$ for every
$\xi=(\xi_1,\dots,\xi_n)\in V_\R(I)$. Let $T$ be a commuting tuple of
bounded operators satisfying $I\subset\ker(\psi_T)$. In order to show
that $T$ is subnormal it suffices, by \cite{At} Theorem 4.1, to show
for any tuple $\alpha=(\alpha_1,\dots,\alpha_n)$ of nonnegative
integers that
$$f\>:=\>\prod_{j=1}^n(1-|z_j|^2)^{\alpha_j}\>\in\>M_T.$$
Now $f>0$ on $V_\R(I)$, so the assumption on $I$ implies $f\in
\Sigma_h+I$, from which $\psi_T(f)\ge0$ is obvious.
\end{rem}

\begin{rem}\label{converse}%
The subnormality property (S) on an ideal $I\subset\R[\x,\y]$ is
strictly weaker than the Quillen property (Q). An immediate example
to show this is given by the ideal $I=(x_1,\dots,x_n)=(z_j+\ol z_j
\colon j=1,\dots,n)$ in $\R[\x,\y]$: Every commuting tuple $T$ of
operators with $I\subset\ker(\psi_T)$ consists clearly of normal
operators.
On the other hand, for any $n\ge2$ there exist strictly positive
polynomials on $V_\R(I)\cong\R^n$ that are not even sums of usual
squares, and \emph{a~fortiori} not of hermitian squares. For
instance, adding a positive constant to the well-known Motzkin
polynomial $y_1^4y_2^2+y_1^2y_2^4-3y_1^2y_2^2+1$ gives such an
example.

It is less straightforward to find an ideal $I$ satisfying (S) but
not (Q), for which $V_\R(I)$ is compact. Let $f(z,\ol z)=0$ be the
equation of an ellipse that is not a circle. Then every bounded
operator $T$ satisfying $f(T,T^*)=0$ is subnormal, that is, the
principal ideal $I=(f)$ satisfies (S). But $I$ does not have the
Quillen property, see Theorem \ref{quadpolonevar} below, and also
\cite{PSell}.
\end{rem}


\section{Normal tuples of matrices}

In this section we will provide a complete geometric characterization
of the ideals satisfying finite rank subnormality (Sf).
More specifically, we will explicitly list those ideals that are
maximal with respect to not satisfying~(Sf).

First we need some preparation. It seems more natural here to work
with $*$-invariant ideals of $\C[\z,\ol\z]$, rather than with ideals
of $\R[\x,\y]$.

\begin{lab}\label{dfniab}%
Given $a\ne b$ in $\C^n$, let $I(a,b)\subset\C[\z,\ol\z]$ be the
ideal consisting of all polynomials $f(\z,\ol\z)$ with
$$f(a,\ol a)\>=\>f(b,\ol b)\>=\>f(a,\ol b)\>=\>f(b,\ol a)\>=\>0.$$
Clearly, $I(a,b)$ is $*$-invariant. As an ideal in $\C[\z,\ol\z]$,
note that $I(a,b)$ is generated by the polynomials $p(\z)$ and
$p(\z)^*$, where $p(\z)\in\C[\z]$ is a holomorphic polynomial
satisfying $p(a)=p(b)=0$.

These ideals were introduced in \cite{PS}, where $I(a,b)$ was denoted
by $J_{a,b}$.
\end{lab}

\begin{lab}\label{dfnjau}%
The usual inner product on the space of hermitian $n\times n$
matrices will be denoted by $\bil ST:=\tr(ST)$. Given $a\in\C^n$ and
a
complex hermitian $n\times n$ matrix $U\ne0$, let $J(a,U)$ be the set
of all $f\in\C[\z,\ol\z]$ such that
\begin{itemize}
\itemsep3pt
\item[(1)]
$f(a,\ol a)=0$,
\item[(2)]
$U\cdot\nabla_\z f(a,\ol a)\>=\>\ol U\cdot\nabla_{\ol\z}f(a,\ol a)
\>=\>0$,
\item[(3)]
$\bigl\langle U,\>\nabla^2_{\z\ol\z}f(a,\ol a)\bigr\rangle\>=\>0$.
\end{itemize}
Here we denote the holomorphic resp.\ antiholomorphic gradient by
$$\nabla_\z f=\Bigl(\frac{\partial f}{\partial z_j}\Bigr)_{j=1,\dots,
n},\quad\nabla_{\ol\z}f=\Bigl(\frac{\partial f}{\partial\ol z_j}
\Bigr)_{j=1,\dots,n}$$
(regarded as column vectors), and the mixed Hessian (Levi form) by
$$\nabla^2_{\z\ol\z}f=\Bigl(\frac{\partial^2f}{\partial z_j\,\partial
\ol z_k}\Bigr)_{j,k=1,\dots,n}$$
It is easy to see that $J(a,U)$ is a $*$-invariant ideal in $\C[\z,
\ol\z]$.
\end{lab}

\begin{example}\label{keyex}%
With a view toward the proof of Theorem \ref{umker} below, let us
consider the following example. Fix an integer $r\ge1$ and column
vectors $w_1,\dots,w_n\in\C^r$, not all of them zero. Moreover, let
$a=(a_1,\dots,a_n)\in\C^n$, and let
$$T_j\ =\ \begin{pmatrix}a_j&0\\w_j&a_jI_r\end{pmatrix}\ \in\>
\M_{r+1}(\C)$$
(we are using a $(1,r)$ block matrix notation).
Clearly, $T=(T_1,\dots,T_n)$ is a commuting tuple of matrices, and is
not normal since $w_j\ne0$ for at least one~$j$. A straightforward
calculation shows $\ker(\psi_T)=J(a,U)$, where $U$ is the nonnegative
hermitian $n\times n$-matrix
$$U\>=\>\bigl(w_j^*w_k\bigr)_{1\le j,k\le n}.$$
Note that the rank of $U$ is the dimension of the linear span of
$w_1,\dots,w_n$ in $\C^r$.
\end{example}

Next comes the main result of this section. It gives a complete
ideal-theoretic characterization of condition~(Sf):

\begin{thm}\label{umker}%
Let $I\subset\C[\z,\ol\z]$ be an ideal. The following are equivalent:
\begin{itemize}
\item[$(\rm Sf)$]
Every commuting tuple $T=(T_1,\dots,T_n)$ of complex matrices
satisfying $I\subset\ker(\psi_T)$ is normal;
\item[(\rm G)]
$I$ is not contained in $I(a,b)$ for any pair $a\ne b$ in $\C^n$, and
neither in $J(a,U)$ for any $a\in\C^n$ and any nonnegative hermitian
$n\times n$ matrix $U\ne0$.
\end{itemize}
\end{thm}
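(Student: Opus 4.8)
The plan is to prove the two implications separately. First I would establish $(\rm Sf)\To(\rm G)$, which is the easy direction: it suffices to exhibit, for each ideal of the form $I(a,b)$ or $J(a,U)$, a non-normal commuting tuple of matrices $T$ with that ideal contained in $\ker(\psi_T)$. For $I(a,b)$ this is classical and was done in \cite{PS}: take the two-dimensional tuple with $T_j$ having eigenvalues $a_j$ and $b_j$ on a suitable non-orthogonal pair of lines, i.e.\ $T_j$ simultaneously similar (but not unitarily) to $\diag(a_j,b_j)$; it is commuting, non-normal, and every $f\in I(a,b)$ annihilates it by direct computation of $\psi_T(f)$ against the four relevant matrix entries. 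For $J(a,U)$ one uses precisely Example \ref{keyex}: write $U=\bigl(w_j^*w_k\bigr)_{j,k}$ with $w_1,\dots,w_n\in\C^r$ not all zero (possible for any nonnegative hermitian $U\neq0$ of rank $r$), form the $(r+1)$-dimensional tuple $T_j=\begin{pmatrix}a_j&0\\w_j&a_jI_r\end{pmatrix}$, which is commuting and non-normal, and invoke the identity $\ker(\psi_T)=J(a,U)$ recorded there. Hence if $I\subset I(a,b)$ or $I\subset J(a,U)$, this tuple witnesses the failure of $(\rm Sf)$.

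The substantive direction is $(\rm G)\To(\rm Sf)$, which I would prove contrapositively: assuming $(\rm Sf)$ fails, produce a diamond ideal $I(a,b)$ or a degeneration $J(a,U)$ containing $I$. So suppose $T=(T_1,\dots,T_n)$ is a commuting, non-normal tuple of matrices with $I\subset\ker(\psi_T)$; it then suffices to show $\ker(\psi_T)$ is contained in some $I(a,b)$ or $J(a,U)$. Since the $T_j$ commute, we may pass to a common generalized eigenspace and thereby assume each $T_j$ has a single eigenvalue $a_j$, so $N_j:=T_j-a_jI$ are commuting nilpotents, not all zero. The self-adjointness relation $\psi_T(z_j\ol z_k)=T_k^*T_j$ lies in $\ker\psi_T$ would force normality on the nose; more usefully, non-normality means some commutator $[T_j^*,T_k]\neq0$, equivalently the positive semidefinite "self-commutator" data is nonzero. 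Now restrict attention to the action on a one-dimensional quotient: pick a vector $\xi$ and a functional so that the "first-order part" of the $N_j$ is visible. Concretely, I would choose a two-step flag — a subspace $E_1\subset E_2$ with $N_j E_2\subset E_1$, $N_j E_1=0$, and $N_j\neq0$ on $E_2/E_1$ for some $j$ — after quotienting by a suitable invariant subspace so that $\dim E_2/E_1=1$ and $\dim E_1\le $ (rank of the resulting Levi data). This presents $T$ (on the relevant subquotient) in exactly the block form of Example \ref{keyex}, with vectors $w_j$ read off from the action $E_2/E_1\to E_1$, and then $\ker(\psi_T)\subset J(a,U)$ with $U=(w_j^*w_k)$. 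In the degenerate case where the quotient tuple has two distinct joint eigenvalues rather than a single nilpotent block — which is the alternative when the generalized-eigenspace reduction already splits — one lands instead in an $I(a,b)$.

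The main obstacle, and where the argument needs real care, is the reduction step: showing that an arbitrary non-normal commuting matrix tuple always admits a subquotient of the precise shape in Example \ref{keyex} (or splits off a two-point piece giving $I(a,b)$), and that $\ker(\psi_T)$ only grows under this reduction — i.e.\ that $\ker(\psi_T)\subset\ker(\psi_{T'})$ for the relevant subquotient tuple $T'$. The inclusion for invariant subspaces and for quotients by invariant subspaces is straightforward from the hereditary calculus (adjoints to the left means $\psi_T(f)$ restricted to an invariant subspace computes $\psi_{T'}(f)$ up to compression, and the kernel condition is preserved because $\psi_T(f)=0$ compresses to $0$); the delicate point is choosing the flag so that $U=(w_j^*w_k)$ is honestly nonzero, which is exactly where non-normality of $T$ must be used — one has to verify that the obstruction to normality survives the passage to the subquotient, rather than being killed by it. I expect this to require a minimal choice (e.g.\ taking the subquotient of smallest dimension among all non-normal ones, or using the last nonzero term of a filtration adapted to the joint nilpotents) together with a short linear-algebra lemma that non-normality of a nilpotent-type block is detected by the first-order data $(w_j)$. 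Once the reduction is in place, matching $\ker(\psi_{T'})$ with $J(a,U)$ is just the computation already carried out in Example \ref{keyex}.
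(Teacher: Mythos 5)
Your direction $(\rm Sf)\To(\rm G)$ is fine and matches the paper (Example \ref{keyex} for $J(a,U)$, the two-dimensional non-orthogonal eigenvector construction for $I(a,b)$). The gap is in $(\rm G)\To(\rm Sf)$, and it is concentrated exactly at the point you flag as needing "real care": your reduction passes to \emph{subquotients}, and you justify the kernel inclusion $\ker(\psi_T)\subset\ker(\psi_{T'})$ by saying that $\psi_T(f)=0$ "compresses to $0$". For an invariant subspace $F$ this is correct, because the compression map $\rho(S)=\pi S i$ satisfies $\psi_{T|F}=\rho\comp\psi_T$ (this uses precisely that $T^\alpha$ leaves $F$ invariant, so that $\rho(T^{*\beta}T^\alpha)=\rho(T^\beta)^*\rho(T^\alpha)$). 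But for a quotient $E/E_1$ by an invariant subspace, i.e.\ for the compression to the co-invariant subspace $E_1^\bot$, hereditary calculus does \emph{not} compress: already for $n=1$, $f=z\ol z$ and $T$ the $2\times2$ nilpotent Jordan block with $E_1=\im(T)$, the quotient operator is $0$, so $\psi_{T'}(z\ol z)=0$, while the compression of $\psi_T(z\ol z)=T^*T$ to $E_1^\bot$ is the identity. So "$\psi_T(f)=0$ compresses to $0$" does not give $\psi_{T'}(f)=0$ for quotient tuples, and your claimed kernel monotonicity under subquotients is unjustified as stated. The paper avoids this entirely: its whole reduction chain (generalized eigenspace $E_\infty(T,a)$, then $\ker(T_{j_0}^{c-2})$ to force $T_j^2=0$, then the common kernel $V$, then $\C x\oplus V$ and $\C x\oplus W$) uses only $T$-\emph{invariant} subspaces, where the compression identity is valid.

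A second structural problem: your first step, "pass to a common generalized eigenspace and thereby assume the $N_j=T_j-a_j$ are commuting nilpotents, not all zero," discards the case in which every $T_j$ is diagonalizable. There the restriction to any joint (generalized) eigenspace is a scalar tuple, hence normal, and the non-normality of $T$ -- which then comes from two non-perpendicular joint eigenspaces $E(T,a)$, $E(T,b)$ with $a\ne b$ -- is invisible after your reduction. This case must be treated separately and leads to $I(a,b)$, not $J(a,U)$: the paper takes the ($T$-invariant) span $F$ of two non-orthogonal eigenvectors $x\in E(T,a)$, $y\in E(T,b)$ and shows $\ker(\psi_{T|F})=I(a,b)$ via Lemma \ref{nml2x2} (linear independence of $I,S,S^*,S^*S$ for non-normal $2\times2$ $S$). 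You acknowledge this alternative only in passing, and the remaining reduction in the nilpotent case (arranging the exact block shape of Example \ref{keyex} while ensuring $U\neq0$) is left as an expectation rather than carried out; as written, the hard direction is a plausible plan with its two key steps -- kernel monotonicity under the proposed reductions, and the case split ensuring non-normality survives -- either incorrect or missing.
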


\begin{lab}
We prove the implication (Sf) $\To$ (G) by contraposition. More
precisely, we will show:
\begin{itemize}
\item[(a)]
For any $a\ne b$ in $\C^n$, there exists a commuting non-normal
$n$-tuple $T$ of $2\times2$ matrices with $\ker(\psi_T)=I(a,b)$.
\item[(b)]
For any $a\in\C^n$ and any nonnegative hermitian $n\times n$ matrix
$U\ne0$, there exists a commuting non-normal $n$-tuple $T$ of
$m\times m$ matrices with $\ker(\psi_T)=J(a,U)$. (We can take
$m=\rk(U)+1$ here.)
\end{itemize}
In fact, (b) has already been proved by Example \ref{keyex}. (The
last assertion comes from the fact that a nonnegative hermitian
matrix $U$ of rank $r\ge1$ can be written $U=W^*W$ with $W\in
\M_{r\times n}(\C)$.)
Assertion (a) will be proved in \ref{2fTo3a} and \ref{nml2x2}.
The reverse implication (G) $\To$ (Sf) will be proved in~\ref{3To2f}.
\end{lab}

\begin{lab}\label{2fTo3a}%
Let $a\ne b$ in $\C^n$. Fix two linearly independent vectors $u,\,v$
in $\C^2$ that are not perpendicular. Let $T_j\in M_2(\C)$ be the
matrix satisfying $T_ju=a_ju$ and $T_jv=b_jv$ ($j=1,\dots,n$). Then
$T=(T_1,\dots,T_n)$ is a commuting tuple of matrices. Clearly, the
matrix $T_j$ fails to be normal for any index $j$ with $a_j\ne b_j$,
and in particular, the tuple $T$ is not normal. We claim
$\ker(\psi_T)=I(a,b)$. The inclusion $I(a,b)\subset\ker(\psi_T)$ is
obvious.
Since the vector space $\C[\z,\ol\z]/I(a,b)$ has dimension~$4$, we
have to show that the linear map $\psi_T\colon\C[\z,\ol\z]\to
M_2(\C)$ is surjective.
This in turn follows immediately from the
following lemma.
\end{lab}

\begin{lem}\label{nml2x2}%
Let $S\in\M_2(\C)$. The matrices $I$, $S$, $S^*$, $S^*S$ are linearly
dependent if and only if $S$ is normal.
\end{lem}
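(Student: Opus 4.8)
The plan is to prove Lemma~\ref{nml2x2} by a direct linear-algebra computation, exploiting the small size of the matrices. One direction is immediate: if $S$ is normal, then (after a unitary change of basis, which preserves all linear-dependence relations among $I,S,S^*,S^*S$) we may take $S=\diag(\lambda,\mu)$; then $S^*=\diag(\ol\lambda,\ol\mu)$ and $S^*S=\diag(|\lambda|^2,|\mu|^2)$ all lie in the $2$-dimensional space of diagonal matrices, so the four matrices are linearly dependent.

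For the converse I would argue by contraposition: assume $S$ is \emph{not} normal and show $I,S,S^*,S^*S$ are linearly independent. First observe that since $S$ is not normal, $\{I,S,S^*\}$ is already linearly independent: if $S^*=\alpha I+\beta S$ for scalars $\alpha,\beta$, then taking adjoints gives $S=\ol\alpha I+\ol\beta S^*$, and substituting back shows $S$ commutes with $S^*$ (one computes $SS^*=\ol\beta S^{*2}+\cdots = S^*S$), contradicting non-normality; the cases where $\{I,S\}$ or $\{I,S^*\}$ are already dependent are the scalar-matrix case, where $S$ is normal. So it remains to show $S^*S\notin\operatorname{span}\{I,S,S^*\}$. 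Suppose $S^*S=aI+bS+cS^*$ with $a,b,c\in\C$. Taking adjoints of this identity gives $S^*S=\ol aI+\ol b S^*+\ol c S$, and subtracting the two expressions yields $(b-\ol c)S+(c-\ol b)S^*=(\ol a-a)I$; by independence of $\{I,S,S^*\}$ this forces $b=\ol c$ and $a\in\R$. Thus $S^*S=aI+bS+\ol b S^*$ with $a$ real. Now I would take traces: $\tr(S^*S)=2a+b\tr S+\ol b\,\ol{\tr S}$, and also multiply the relation by $S$ and take traces, or more efficiently just use the $2\times2$ structure directly --- write $S$ in a basis where it is upper triangular, $S=\begin{pmatrix}\lambda&t\\0&\mu\end{pmatrix}$ with $t\ne0$ (this is possible with $t\ne0$ precisely because $S$ is not normal; if every Schur form were diagonal, $S$ would be normal). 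Then one computes $S^*S=\begin{pmatrix}|\lambda|^2&\ol\lambda t\\ \ol t\lambda&|t|^2+|\mu|^2\end{pmatrix}$ and compares entries with $aI+bS+\ol bS^*=\begin{pmatrix}a+b\lambda+\ol b\ol\lambda & bt\\ \ol b\ol t & a+b\mu+\ol b\ol\mu\end{pmatrix}$. The off-diagonal $(2,1)$-entries give $\ol t\lambda=\ol b\ol t$, hence $\lambda=\ol b$ (as $t\ne0$), and the $(1,2)$-entries give $\ol\lambda t=bt$, hence $b=\ol\lambda$, consistent. Substituting $b=\ol\lambda$ into the $(1,1)$-entry: $|\lambda|^2=a+|\lambda|^2+|\lambda|^2$, so $a=-|\lambda|^2$; and into the $(2,2)$-entry: $|t|^2+|\mu|^2=a+\ol\lambda\mu+\lambda\ol\mu=-|\lambda|^2+2\Re{(\ol\lambda\mu)}=-|\lambda-\mu|^2+|\mu|^2$, which forces $|t|^2=-|\lambda-\mu|^2\le0$, contradicting $t\ne0$. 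Hence no such relation exists and the four matrices are independent.

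The main obstacle is really just organizing the bookkeeping cleanly --- deciding how much to push through abstractly (trace identities, adjoint symmetry) versus reducing to an explicit upper-triangular normal form and comparing entries. The upper-triangular reduction is the most transparent route, and the only subtle point to state carefully is the standard fact that a $2\times2$ matrix is normal if and only if its Schur (upper-triangular) form has zero off-diagonal entry, equivalently if and only if it is unitarily diagonalizable; this is where non-normality gets used in an essential way to guarantee $t\ne0$. Everything else is a short finite computation, so no genuine difficulty is expected beyond keeping the conjugates straight.
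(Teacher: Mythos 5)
Your proof is correct and follows essentially the same route as the paper: reduce $S$ to upper-triangular (Schur) form by a unitary change of basis, which preserves all linear relations among $I$, $S$, $S^*$, $S^*S$, and then compute. The paper shortens the computation by first observing that the span of these four matrices is unchanged when $S$ is replaced by $S-\lambda I$, so taking $\lambda$ to be an eigenvalue one may assume the triangular form is $\begin{pmatrix}0&a\\0&b\end{pmatrix}$, after which ``span $=\M_2(\C)$ iff $a\ne0$ iff $S$ non-normal'' is immediate — this avoids your preliminary independence step for $\{I,S,S^*\}$, the self-adjointness bookkeeping for the coefficients, and the final $|t|^2=-|\lambda-\mu|^2$ contradiction.
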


\begin{proof}
Let $W_S$ be the linear span of $I$, $S$, $S^*$ and $S^*S$ in $\M_2
(\C)$. We have $W_S=W_{S-\lambda I}$ for every $\lambda\in\C$.
Since $S-\lambda I$ is normal iff $S$ is normal, we can replace $S$
by $S-\lambda I$ for any $\lambda\in\C$. In particular, we may do
this for $\lambda$ an eigenvalue of $S$. After changing to a suitable
orthonormal basis we can therefore assume $S=\choose{0\ a}{0\ b}$
where $a$, $b\in\C$. For this matrix it is immediate that $W_S\ne\M_2
(\C)$ if and only if $a=0$, if and only if $S$ is normal.
\end{proof}

\begin{lab}\label{3To2f}%
We now show that (G) implies (Sf) in Theorem \ref{umker}, again by
contraposition. To this end let $E$ be a finite-dimensional Hilbert
space, and let $T=(T_1,\dots,T_n)$ be a commuting tuple of
endomorphisms of $E$ such that at least one $T_j$ is not normal.
We'll show that the ideal $\ker(\psi_T)$ of $\C[\z,\ol\z]$ is
contained in one of the ideals $I(a,b)$ or $J(a,U)$, as in~(G).

Let $F$ be any $T$-invariant subspace of $E$ (that is, $T_jF\subset
F$ holds for each~$j$), and let $T|F$ denote the restriction of $T$
to $F$. So $T|F$ is a commuting tuple of endomorphisms of $F$. Let
$i\colon F\to E$ be the inclusion map and $\pi\colon E\to F$ the
orthogonal projection onto $F$, and let
$$\rho\colon\End(E)\to\End(F),\quad\rho(S)=\pi\comp S\comp i.$$
For $S\in\End(E)$ we have $(S|F)^*=\rho(S^*)$. Moreover, if $S$
leaves $F$ invariant, then $\rho(S'S)=\rho(S')\rho(S)$, $\rho(S^*S')=
\rho(S)^*\rho(S')$ hold for any $S'\in\End(E)$.
As maps $\C[\z,\ol\z]\to\End(F)$, we therefore have $\psi_{T|F}=
\rho\comp\psi_T$.
In particular, $\ker(\psi_T)\subset\ker(\psi_{T|F})$. In order to
prove what we want, we can therefore replace $E$ and $T$ by $F$ and
$T|F$ whenever $F$ is a $T$-invariant subspace of $E$ for which $T|F$
is not normal.

For any tuple $a=(a_1,\dots,a_n)\in\C^n$, denote by
$$E(T,a)\>=\>\{\xi\in E\colon\>(T_j-a_j)\xi=0\text{ \ for \ }j=1,
\dots,n\}$$
resp.\ by
$$E_\infty(T,a)\>=\>\{\xi\in E\colon\>(T_j-a_j)^{\dim(E)}\xi=0
\text{ \ for \ }j=1,\dots,n\}$$
the $a$-eigenspace resp.\ the generalized $a$-eigenspace of $T$.
These are $T$-invariant subspaces of $E$, and $E=\bigoplus_{a\in\C^n}
E_\infty(T,a)$.
Since $T$ is not normal, one of the following two situations occurs:
\begin{itemize}
\item[(1)]
One of the $T_j$ is not diagonalizable;
\item[(2)]
each $T_j$ is diagonalizable, but for at least one index $j$ there
are two eigenspaces of $T_j$ that are not perpendicular.
\end{itemize}
Let us first discuss case (2). By assumption we have $E=\bigoplus
_{a\in\C^n}E(T,a)$, and there exist $a\ne b$ in $\C^n$ such that
$E(T,a)$ and $E(T,b)$ are not perpendicular.
Pick vectors $x\in E(T,a)$ and $y\in E(T,b)$ that are not
perpendicular. The two-dimensional subspace $F$ spanned by $x$ and
$y$ is $T$-invariant, and $T|F$ is not normal.
By the argument used in \ref{2fTo3a}, we see that $\ker(\psi_{T|F})=
I(a,b)$. So we are finished with case~(2).

Now we discuss case~(1) and assume that one of the $T_j$ cannot be
diagonalized. Then there exists $a\in\C^n$ with $E(T,a)\ne E_\infty
(T,a)$.
Replacing $E$ by $E_\infty(T,a)$ and $T_j$ by $T_j-a_j$ for
each $j$ (the latter corresponding to a change of variables $z_j\to
z_j-a_j$ in the polynomial ring), we can assume that each $T_j$ is
nilpotent and $T_j\ne0$ for at least one~$j$. Let $c\ge2$ be the
highest order of nilpotency among the $T_j$, that is, assume $T_j^c
=0$ for all $j$ and $T_{j_0}^{c-1}\ne0$ for one index $j_0$.
Replacing $E$ by $\ker(T_{j_0}^{c-2})$ we can assume $T_j^2=0$ for
all~$j$.

Let $V_j=\ker(T_j)$ for $j=1,\dots,n$. Whenever there are two indices
$j,\>k$ with $V_j\not\subset V_k$, we can replace $E$ by $V_j$.
Iterating this step we arrive at the case where all nonzero operators
among $T_1,\dots,T_n$ have the same kernel $V\ne E$.
Thus, for each $j$, we have either $T_j=0$ or $\im(T_j)\subset\ker
(T_j)=V$, and the latter occurs for at least one index~$j$.

Choose a nonzero vector $x\in V^\bot$. The subspace $F:=\C x\oplus
V$ of $E$ is $T$-invariant, and we can replace $E$ with $F$.
Put $y_j=T_jx$ ($j=1,\dots,n$), and let $W\subset V$ be the linear
span of $y_1,\dots,y_n$. We can replace $E$ by $\C x\oplus W$, and
have now arrived at a minimal non-normal tuple of operators.

Let $r=\dim(W)$, so $1\le r\le n$. Fixing an orthonormal linear basis
of $W$, we represent the operators $T_j$ by $(r+1)\times(r+1)$
matrices as
$$T_j\>=\>\begin{pmatrix}0&0&\cdots&0\\y_{1j}&0&\cdots&0\\
\vdots&\vdots&&\vdots\\y_{rj}&0&\cdots&0\end{pmatrix}.$$
Let $w_j=(y_{1j},\dots,y_{rj})$, regarded as a column vector ($j=1,
\dots,n$), and let
$$U\>=\>\bigl(w_j^*w_k\bigr)_{1\le j,k\le n}$$
a psd hermitian matrix of rank~$r$. From Example \ref{keyex} we see
$\ker(\psi_T)=J(a,U)$.

This completes the proof of Theorem \ref{umker}.
\qed
\end{lab}

\begin{rem}
Ideals of the form $I(a,b)$ or $J(a,U)$, as in \ref{umker}, are
pairwise incomparable with respect to inclusion, except that $J(a,U)=
J(a,cU)$ for every real number $c>0$. To see that $J(a,U)\subset
J(a,U')$ implies $U'=cU$ with $c>0$, observe that the mixed Hessians
of elements of $J(a,U)$ are precisely the matrices that are
orthogonal to $U$ (condition (3) of \ref{dfnjau}). Therefore $U$ is
determined by $J(a,U)$ up to (positive) scaling.

So we see that the ideals $I(a,b)$ and $J(a,U)$ are precisely the
maximal ones among the ideals of relations between non-normal
commuting tuples of matrices (in the sense of
hereditary calculus).
\end{rem}

\begin{rem}
A complex square matrix may be non-normal for two reasons: It may
fail to be diagonalizable, or it may have two non-perpendicular
eigenvectors for different eigenvalues. The ideals $J(a,U)$ and
$I(a,b)$ in Theorem \ref{umker} correspond to these two
possibilities. More precisely, if $T=(T_1,\dots,T_n)$ is a commuting
tuple of matrices, and if one of the $T_j$ is not diagonalizable,
then $\ker(\psi_T)\subset J(a,U)$ for some pair $(a,U)$. On the other
hand, if the $T_j$ are diagonalizable but one of them has two
non-perpendicular eigenspaces, then $\ker(\psi_T)\subset I(a,b)$ for
some pair $(a,b)$. Both assertions are clear from the proof in
\ref{3To2f}.
\end{rem}

\begin{rem}
Consider commuting tuples $T=(T_1,\dots,T_n)$ in $B(E)$ where $E$ is
a complex Hilbert space, together with the associated maps $\psi_T
\colon\C[\z,\ol\z]\to B(E)$ given by hereditary calculus
(\ref{heredcalc}). It is a consequence of Fuglede's theorem that the
tuple $T$ is normal if and only if $\psi_T$ is a ring homomorphism.
As a consequence of Theorem \ref{umker}, we can add another
characterization, as long as $E$ has finite dimension. It shows that
the normality of a commuting tuple $T$ of matrices can be decided
from its ideal $\ker(\psi_T)$ of relations:
\end{rem}

\begin{cor}
A commuting tuple $T$ of matrices is normal if and only if the ideal
$\ker(\psi_T)$ is not contained in $I(a,b)$ for any $a\ne b$ in
$\C^n$, and neither in $J(a,U)$ for any $a\in\C^n$ and any
nonnegative hermitian $n\times n$-matrix $U\ne0$.
\end{cor}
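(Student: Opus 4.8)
The plan is to deduce this corollary directly from Theorem~\ref{umker}. The only gap between the two statements is the following: Theorem~\ref{umker} is phrased for an abstract ideal $I$ and says that condition (G) on $I$ is equivalent to ``every commuting matrix tuple $T$ with $I\subset\ker(\psi_T)$ is normal'', whereas the corollary is phrased for a fixed tuple $T$ and asserts that $T$ itself is normal iff $\ker(\psi_T)$ satisfies (G). So the first step is to apply Theorem~\ref{umker} with $I=\ker(\psi_T)$ for the given tuple $T$.

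With that choice of $I$, condition (G) for $\ker(\psi_T)$ is exactly the right-hand side of the corollary. It remains to see that condition (Sf) for $I=\ker(\psi_T)$ is equivalent to the single statement ``$T$ is normal''. One direction is immediate: if (Sf) holds for $\ker(\psi_T)$, then since $T$ is itself a commuting matrix tuple with $\ker(\psi_T)\subset\ker(\psi_T)$, the tuple $T$ is normal. For the converse, suppose $T$ is normal, and let $T'$ be any commuting matrix tuple with $\ker(\psi_T)\subset\ker(\psi_{T'})$; I must show $T'$ is normal. Here I would invoke the characterization already noted in the remark preceding the corollary: by Fuglede's theorem, a commuting tuple is normal precisely when $\psi_T$ is a ring homomorphism. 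Since $T$ is normal, $\ker(\psi_T)$ contains every polynomial expressing multiplicativity failure — concretely, for all $p,q\in\C[\z,\ol\z]$ the element $(pq)-$ (the hereditary-calculus product of $p$ and $q$) lies in $\ker(\psi_T)$; equivalently $\ker(\psi_T)$ contains $z_j\ol z_k-\ol z_k z_j$ ``in the calculus sense'', i.e.\ the real/imaginary parts of the commutator polynomials. Then $\ker(\psi_{T'})\supset\ker(\psi_T)$ forces $\psi_{T'}$ to be a ring homomorphism as well, whence $T'$ is normal by Fuglede. This gives (Sf) for $\ker(\psi_T)$, and the corollary follows.

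The one point requiring a little care — and the main (very minor) obstacle — is making precise the claim that normality of $T$ is detected by membership in $\ker(\psi_T)$ of a fixed, $T$-independent set of polynomials. The cleanest route is: $T$ is normal iff each $T_j$ is normal and they commute with each others' adjoints, iff $T_jT_j^*-T_j^*T_j=0$ and $T_jT_k^*-T_k^*T_j=0$ for all $j,k$, iff the conjugation-invariant polynomials $z_j\ol z_k-\ol z_k z_j$ (more precisely their real and imaginary parts in $\R[\x,\y]$) all lie in $\ker(\psi_T)$. Call this finite set of polynomials $N\subset\C[\z,\ol\z]$. Then ``$T$ normal'' $\iff N\subset\ker(\psi_T)$, and the inclusion $\ker(\psi_T)\subset\ker(\psi_{T'})$ immediately propagates normality from $T$ to $T'$. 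Once this is spelled out, the equivalence (Sf)$\iff$``$T$ normal'' for $I=\ker(\psi_T)$ is automatic, and combining with Theorem~\ref{umker} completes the proof.
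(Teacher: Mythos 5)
The reduction to Theorem \ref{umker} with $I=\ker(\psi_T)$ is the right framing, and your easy direction is fine: if $\ker(\psi_T)$ satisfies (G), then (Sf) holds for this ideal by Theorem \ref{umker}, and applying (Sf) to $T$ itself gives normality of $T$. The gap is in the converse, where you claim that normality is witnessed by a fixed, $T$-independent set $N$ of polynomials lying in $\ker(\psi_T)$, namely the ``commutator polynomials'' $z_j\ol z_k-\ol z_kz_j$. But $\C[\z,\ol\z]$ is a \emph{commutative} polynomial ring: $z_j\ol z_k-\ol z_kz_j$ is the zero polynomial, so it lies in $\ker(\psi_{T'})$ for every tuple $T'$, normal or not. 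The hereditary calculus is precisely the device that fixes the operator ordering (adjoints to the left) at the level of commutative polynomials; there is no polynomial $f$ with $\psi_{T'}(f)=T_jT_j^*-T_j^*T_j$ for all $T'$, nor is ``the hereditary-calculus product of $p$ and $q$'' an element of the ring, so the failure of multiplicativity of $\psi_{T'}$ is not recorded by membership of any universal polynomials in its kernel. (If such a finite set $N$ existed, the ideal it generates would satisfy (Sf) and Theorem \ref{umker} would be largely vacuous; Example \ref{keyex} already gives a non-normal tuple whose kernel contains all your proposed witnesses, since they are $0$.) Consequently the step ``$\ker(\psi_T)\subset\ker(\psi_{T'})$ forces $\psi_{T'}$ to be a ring homomorphism'' is unsupported: that the kernel alone detects normality is exactly the content of the corollary, not something obtainable formally from Fuglede's theorem.

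What is missing is an argument that, for normal $T$, the ideal $\ker(\psi_T)$ violates none of the inclusions in (G). The paper does this directly: a normal commuting tuple of matrices admits an orthogonal basis of simultaneous eigenvectors, hence $\ker(\psi_T)=\bigcap_{a\in F}\m_a$ for a finite set $F\subset\C^n$, where $\m_a=\{f\in\C[\z,\ol\z]\colon f(a,\ol a)=0\}$, and one checks that such a finite intersection is never contained in any $I(a,b)$ with $a\ne b$, nor in any $J(a,U)$ (exhibit $f$ vanishing at all points $(c,\ol c)$, $c\in F$, while violating one of the extra defining conditions of $I(a,b)$ resp.\ $J(a,U)$). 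With that computation in place, (G) holds for $\ker(\psi_T)$ and Theorem \ref{umker} finishes the proof; your formal propagation argument cannot replace it.
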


\begin{proof}
Indeed, if $T$ is normal, there is an orthogonal basis of
simultaneous eigenvectors. This implies that $\ker(\psi_T)$ is an
intersection of finitely many ideals $\m_a=\{f\in\C[\z,\ol\z]\colon
f(a,\ol a)=0\}$, $a\in\C^n$. Such an intersection is never contained
in any of the ideals $I(a,b)$ or $J(a,U)$.
\end{proof}

\begin{rems}\label{essdiffjau}%
\hfil
\smallskip

1.\
Up to holomorphic linear coordinate changes there exist precisely $n$
essentially different ideals $J(a,U)$ in $\C[\z,\ol\z]$. Indeed, we
can assume that $a=0$ and that
$$U\>=\>U_r\>:=\>\diag(1,\dots,1,0,\dots,0)$$
is the diagonal matrix of rank $r$, where $1\le r\le n$ can be
arbitrary. In this case, $J(0,U_r)$ consists of all $f\in
\C[\z,\ol\z]$ which are modulo $(z_1,\dots,z_n)^2+(\ol z_1,\dots,
\ol z_n)^2$ congruent to
$$\sum_{j=r+1}^n(b_jz_j+b'_j\ol z_j)+\sum_{j,k=1}^nc_{jk}\,
z_j\ol z_k$$
with $b_j,\,b'_j,\,c_{jk}\in\C$ and
$$c_{11}+\cdots+c_{rr}\>=\>0.$$
A system of generators for the ideal $J(0,U_r)$ is therefore given by
the following list of polynomials:
$$\begin{array}{ll}
z_jz_k,\ \ol z_j\ol z_k & 1\le j\le k\le r, \\ [2pt]
z_j\ol z_k,\ z_k\ol z_j & 1\le j<k\le r, \\ [2pt]
|z_j|^2-|z_{j+1}|^2 & 1\le j<r, \\ [2pt]
z_j,\ \ol z_j & r+1\le j\le n.
\end{array}$$
\smallskip

2.\
In \cite{PS}, the ideals
$$J_{a,a}\>=\>(z_1-a_1,\dots,z_n-a_n)^2+(\ol z_1-\ol a_1,\dots,
\ol z_n-\ol a_n)^2\quad(a\in\C^n)$$
of $\C[\z,\ol\z]$ were used. They ideals relate to the ideals
$J(a,U)$ studied here via
$$J_{a,a}\>=\>\bigcap_UJ(a,U),$$
intersection over all nonnegative hermitian matrices $U\ne0$. In
particular, in the one variable case ($n=1$) we have $J_{a,a}=
J(a,1)$.
\end{rems}

As a consequence of Theorem \ref{umker} and Proposition \ref{synth},
we obtain:

\begin{cor}\label{idtheorobst}%
Let $I\subset\R[\x,\y]$ be an ideal, and assume that $I\subset
I(a,b)$ for some $a\ne b$ in $\C^n$, or that $I\subset J(a,U)$ for
some $a\in\C^n$ and some nonnegative hermitian $n\times n$ matrix
$U\ne0$. Then there exists $f\in\R[\x,\y]$ such that $f>0$ on
$V_\R(I)$, but $f$ is not a hermitian sum of squares modulo~$I$.
\qed
\end{cor}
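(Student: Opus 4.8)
The plan is to combine the two necessary conditions for the Quillen property that were established earlier: the operator-theoretic obstruction from Proposition \ref{synth} (namely (Q) $\To$ (S)) and the geometric/ideal-theoretic characterization of (Sf) from Theorem \ref{umker}. First I would observe that the hypothesis is exactly the negation of condition (G) from Theorem \ref{umker}, applied to a suitable $*$-invariant ideal. The subtlety is that $I$ is given as an ideal of $\R[\x,\y]$, whereas $I(a,b)$ and $J(a,U)$ live in $\C[\z,\ol\z]$; so I would pass to the $*$-invariant ideal $J:=I\cdot\C[\z,\ol\z]$ generated by $I$ in $\C[\z,\ol\z]$, and note that $J\cap\R[\x,\y]=I$ under the correspondence of \ref{dictideals}. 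The containment $I\subset I(a,b)\cap\R[\x,\y]$ (resp.\ $I\subset J(a,U)\cap\R[\x,\y]$) then upgrades to $J\subset I(a,b)$ (resp.\ $J\subset J(a,U)$), since $I(a,b)$ and $J(a,U)$ are $*$-invariant and hence generated by their real parts together with their imaginary parts.

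Once $J$ fails (G), Theorem \ref{umker} tells us $J$ fails (Sf): there is a commuting tuple $T$ of complex matrices on a finite-dimensional space $E$ with $J\subset\ker(\psi_T)$ but $T$ not normal. Since $\ker(\psi_T)$ is $*$-invariant and $\ker(\psi_T)\cap\R[\x,\y]\supset J\cap\R[\x,\y]=I$, we have $I\subset\ker(\psi_T)$, i.e.\ $p(T,T^*)=0$ for every $p\in I$. A non-normal tuple of matrices is in particular not subnormal (recall that on a finite-dimensional Hilbert space subnormal is equivalent to normal, as noted after \ref{dfnssf}); therefore $I$ does not satisfy condition (S). By the contrapositive of Proposition \ref{synth}, $I$ does not satisfy (Q): there exists $f\in\R[\x,\y]$ with $f>0$ on $V_\R(I)=X_{\Sigma_h+I}$ that does not lie in $\Sigma_h+I$, which is precisely the assertion that $f$ is not a hermitian sum of squares modulo~$I$.

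There is essentially no hard step here — the corollary is a formal chaining of results already proved — but the one point that needs genuine (if brief) care is the bookkeeping between $\R[\x,\y]$-ideals and $\C[\z,\ol\z]$-ideals. Concretely, one must check that extending $I$ to $J=I\C[\z,\ol\z]$ and then contracting back recovers $I$ (this is part of \ref{dictideals}), and that the inclusion $I\subset I(a,b)\cap\R[\x,\y]$ forces $J\subset I(a,b)$: writing any $g\in J$ as $g=\sum f_k h_k$ with $f_k\in I$, $h_k\in\C[\z,\ol\z]$, and using that $f_k\in I(a,b)$ while $I(a,b)$ is an ideal, one gets $g\in I(a,b)$ directly — so in fact this direction is immediate and does not even need $*$-invariance of $I(a,b)$. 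The same one-line argument handles $J(a,U)$. After that, the chain (not (G)) $\To$ (not (Sf)) $\To$ (not (S)) $\To$ (not (Q)) runs through verbatim. I would present the whole thing in a few sentences, citing \ref{umker} for the first implication and \ref{synth} for the last.
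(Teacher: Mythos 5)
Your proposal is correct and is exactly the paper's intended argument: the paper states the corollary as an immediate consequence of Theorem \ref{umker} (not (G) gives a non-normal commuting matrix tuple annihilating $I$, hence failure of (S)) and the contrapositive of Proposition \ref{synth}, with no further proof given. Your extra bookkeeping with $J=I\,\C[\z,\ol\z]$ is harmless but unnecessary, since $I\subset I(a,b)$ (or $J(a,U)$) already places $I$ inside the kernel $\ker(\psi_T)$ of the explicit non-normal tuples constructed in \ref{2fTo3a} and \ref{keyex}.
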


In the first case of Corollary \ref{idtheorobst}, the assertion was
already proved in \cite{PS} Proposition 3.1, by a different argument.


\section{Examples}

We start by identifying some classes of (principal) ideals that
satisfy the subnormality condition (S).

\begin{prop}\label{ellgend}%
Let $f=f^*\in\C[\z,\ol\z]$ be of the form
$$f\>=\>\Re g(\z)-\sum_{k=1}^r|q_k(\z)|^2$$
where $g,\,q_1,\dots,q_r\in\C[\z]$. Assume for every $j=1,\dots,n$
that $z_j$ is a polynomial in $g,\,q_1,\dots,q_r$, that is,
$$\C[g,\,q_1,\dots,q_r]\>=\>\C[z_1,\dots,z_n].$$
Then every commuting tuple $T$ satisfying $f(T,T^*)=0$ is subnormal.
In other words, the principal ideal $I=(f)$ has property~$(\rm S)$.
\end{prop}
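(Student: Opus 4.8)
The plan is to pass from the polynomial identity to an operator identity, exploit the algebraic hypothesis on $g,q_1,\dots,q_r$ to replace $T$ by a single ``model'' commuting tuple, and then establish subnormality of the model. First, evaluating $f(T,T^*)$ by hereditary calculus gives $\psi_T(\Re g)=\tfrac12(g(T)+g(T)^*)$ and $\psi_T(|q_k|^2)=q_k(T)^*q_k(T)$, so the hypothesis $f(T,T^*)=0$ is the operator identity
\[
g(T)+g(T)^*\ =\ 2\sum_{k=1}^r q_k(T)^*q_k(T).
\]
Put $Y_0:=g(T)$ and $Q_k:=q_k(T)$. Being polynomials in the commuting tuple $T$, the operators $Y_0,Q_1,\dots,Q_r$ commute pairwise and with every $T_j$, and the displayed identity reads $\Re Y_0=\sum_{k=1}^r Q_k^*Q_k$. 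By Proposition~\ref{sigmasubn}, $T$ is subnormal if and only if $\Sigma\subset M_T$, so the task is to upgrade this single operator identity to full subnormality.

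Here the hypothesis $\C[g,q_1,\dots,q_r]=\C[\z]$ enters: each $z_j$, hence each $T_j$, is a polynomial in $g,q_1,\dots,q_r$, while conversely each of $g,q_1,\dots,q_r$ lies in $\C[\z]$. A polynomial image of a subnormal commuting tuple is again subnormal, since the polynomials, applied (with no adjoints) to a commuting normal extension, leave the original subspace invariant and --- by Fuglede's theorem --- are normal there. Hence it suffices, and is in fact equivalent, to show that $Y:=(Y_0,Q_1,\dots,Q_r)$ is subnormal. We have thus reduced the assertion to the statement that a commuting tuple $Y=(Y_0,Q_1,\dots,Q_r)$ of bounded operators with $\Re Y_0=\sum_{k=1}^r Q_k^*Q_k$ is subnormal.

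To prove this, the natural route is to build a commuting normal extension $(\widetilde Y_0,\widetilde Q_1,\dots,\widetilde Q_r)$ of $Y$ directly: the column operator $(Q_1,\dots,Q_r)\colon E\to E^r$ has Gram operator $\sum_k Q_k^*Q_k=\Re Y_0$, so one expects a Halmos-type tower over $E$ built from its iterates, with $\widetilde Y_0$ prescribed along the way so that the whole extended tuple stays commuting. Equivalently one can verify the Halmos--Bram--It\^o criterion for $Y$: the relation $Y_0+Y_0^*=2\sum_k Q_k^*Q_k$, read in hereditary calculus, expresses the antiholomorphic variable ``$Y_0^*$'' through $Y_0$ and the $Q_k,Q_k^*$, and together with the commutativity of $Y_0,Q_1,\dots,Q_r$ this should reduce the needed positivity to that of the automatically nonnegative hermitian squares $\psi_Y(|h|^2)=h(Y)^*h(Y)$, modulo the relation. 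Geometrically, $\{\Re w_0=\sum_k|w_k|^2\}\subset\C^{r+1}$ is the boundary of a Siegel-type domain --- hence, via the Cayley transform, a dense open piece of the unit sphere in $\C^{r+1}$ --- and $V_\R(I)$ embeds into it via $a\mapsto(g(a),q_1(a),\dots,q_r(a))$, so a Quillen/Athavale-type phenomenon underlies the result.

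I expect the main obstacle to be that the relation places no bound on $\Im Y_0$ --- in particular $V_\R(I)$ need not be compact --- so powers of $Y_0$ cannot be reduced directly and the ``contractive tuple'' machinery (such as the Athavale criterion used in the remark after \ref{synth}) does not apply verbatim. The argument must therefore use decisively that $\Re Y_0$ is \emph{exactly} $\sum_k Q_k^*Q_k$ (an inequality would be useless) together with the commutation $Y_0Q_j=Q_jY_0$, which pins $Y_0^*$ against the $Q_k$ without touching $Y_0$ itself; in particular one cannot simply renormalize to a spherical isometry $\sum_k\widehat Q_k^*\widehat Q_k=I$ and invoke Athavale's theorem \cite{At}, because $\sum_k Q_k^*Q_k$ need not commute with the $Q_k$, so $\widehat Q_k\ne R^{-1/2}Q_k$ in general. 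For $n=1$, $r=1$, $q_1(z)=z$ the statement specializes to the assertion that an operator satisfying the defining identity of an ellipse is subnormal, recovering the case treated in \cite{PSell}.
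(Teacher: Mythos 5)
Your reduction is fine and matches the endgame of the paper's argument: since $\C[g,q_1,\dots,q_r]=\C[\z]$, subnormality of the tuple $\bigl(g(T),q_1(T),\dots,q_r(T)\bigr)$ does imply subnormality of $T$. But the heart of the matter --- that a commuting tuple $Y=(Y_0,Q_1,\dots,Q_r)$ with $\Re{Y_0}=\sum_k Q_k^*Q_k$ is subnormal --- is exactly what you do not prove. Everything you offer there is hedged (``one expects a Halmos-type tower'', ``should reduce the needed positivity''), and verifying the Halmos--Bram--It\^o form or constructing the tower by hand is not a routine step; as it stands the proposal has a genuine hole at its central claim.

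Moreover, the obstacle you cite to dismiss the Athavale route is not a real one. You only consider renormalizing by $R^{-1/2}$ with $R=\sum_kQ_k^*Q_k$, which indeed fails to preserve commutativity; the correct move (and the paper's proof) is to complete the square in $Y_0$ instead. Writing $A:=Y_0+c\,\id$ with $c>\|Y_0\|$ (so $A$ is invertible and $A^{-1}$ commutes with $Y_0$ and all $Q_k$), the relation $2c\,\Re{Y_0}=|Y_0+c|^2-c^2-|Y_0|^2$ in hereditary calculus gives
$$A^*A\>=\>c^2\,\id+Y_0^*Y_0+2c\sum_kQ_k^*Q_k,$$
so the commuting operators $cA^{-1},\ Y_0A^{-1},\ \sqrt{2c}\,Q_1A^{-1},\dots,\sqrt{2c}\,Q_rA^{-1}$ satisfy the sphere identity and are subnormal by Athavale's theorem \cite{At}; one then recovers subnormality of $(Y_0,Q_1,\dots,Q_r)$ by rational functional calculus together with the spectral inclusion theorem \cite{P0} (invertibility of $cA^{-1}$ passes to the minimal normal extension), and finally subnormality of $T$ by your polynomial-image argument. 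So your Cayley-transform intuition was pointing in the right direction, but the operator-level implementation --- the one step that makes the proof work --- is missing, and your explicit claim that one ``cannot simply renormalize to a spherical isometry'' is mistaken once the renormalizing operator is taken to be $(Y_0+c)^{-1}$ rather than $R^{-1/2}$.
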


\begin{proof}
Choose a real number $c>0$ so large that the operator $A:=g(T)
+c\,\id$ is invertible. From $2c\,\Re(g)=|g+c|^2-c^2-|g|^2$ we get
$$2cf\>=\>|g+c|^2-c^2-|g|^2-2c\sum_k|q_k|^2.$$
This implies
$$A^*A\>=\>c^2\,\id+g(T)^*g(T)+2c\sum_kq_k(T)^*q_k(T),$$
hence suitable scalings of the commuting operators
$$A^{-1},\ g(T)A^{-1},\ q_1(T)A^{-1},\>\dots,\>q_r(T)A^{-1}$$
satisfy the identity of the sphere. Therefore the tuple consisting of
these operators is subnormal, by Athavale's theorem \cite{At}. Using
rational functional calculus in conjunction with the spectral
inclusion theorem \cite{P0}, we conclude that the tuple $\bigl(g(T)$,
$q_1(T),\dots,q_k(T)\bigr)$ commutes and is subnormal. Now the
hypothesis implies that the tuple $T=(T_1,\dots,T_n)$ is subnormal.
\end{proof}

\begin{lab}\label{gendlemniscate}%
We discuss yet another class of identities that entail the
subnormality condition (S), this time in one variable ($n=1$). Let
$f\in\R[x,y]$ have the form
$$f\>=\>|g(z)|^2-a-|l(z)|^2-\sum_{k=1}^r|q_k(z)|^2$$
where $a>0$ is a real number, $g,\,q_1,\dots,q_r\in\C[z]$ are
arbitrary polynomials and $l\in\C[z]$ has degree one. The identity
$f(T,T^*)=0$ implies $g(T)^*g(T)\ge aI$, and we conclude that $g(T)$
is invertible. Inverting $g(T)$ we again arrive at a sphere identity,
and arguing as in \ref{ellgend} we conclude in particular that $l(T)$
is subnormal, whence $T$ is subnormal.

This construction can also be performed in any number of variables.
\end{lab}

\begin{lab}\label{exnotarch}%
In certain cases we can prove that $V_\R(f)$ is compact and
$\Sigma_h$ is not archimedean modulo~$f$, for $f$ as in
\ref{gendlemniscate}. Indeed, let
$$f\>=\>|z|^{2m}-\sum_{j=0}^{m-1}a_j|z|^{2j}$$
with $m\ge2$ and real coefficients $a_0,\dots,a_{m-1}\ge0$. Then
$\Sigma_h+(f)$ is not archimedean. Indeed, assume $c-|z|^2+fg\in
\Sigma_h$, with $c\in\R$ and $g=g^*\in\C[z,\ol z]$. Let $b_j$ be the
coefficient of $|z|^{2j}$ in $g$. For any $j\ge0$, the coefficient of
$|z|^{2j}$ in $c-|z|^2+fg$ is $\ge0$. For $j=1$ this gives
\begin{equation}\label{coeff1}%
-1-a_1b_0-a_0b_1\ge0,
\end{equation}
while for $j=m+k$ with $k\ge0$ it gives
\begin{equation}\label{eqbk}%
b_k-a_{m-1}b_{k+1}-\cdots-a_0b_{k+m}\>\ge\>0\quad(k\ge0).
\end{equation}
Let $l\ge0$ be the largest index for which $b_l\ne0$ (by
\eqref{coeff1}, there has to be such~$l$). From \eqref{eqbk} for
$k=l$ we get $b_l>0$. By a downward induction, repeatedly using
\eqref{eqbk}, we conclude that $b_k\ge0$ holds for all $k\ge0$.
But this contradicts \eqref{coeff1}. On the other hand, property
$(2)$ holds as soon as $a_0>0$ and $a_1>0$, see \ref{gendlemniscate}.
The zero set $V_\R(f)$ is the union of (at least one, at most $m-1$)
concentric circles around~$0$.
\end{lab}

Next, we look at the simplest case, which is plane conics. The
following theorem shows that we can completely decide in which cases
the various properties discussed so far are satisfied. In particular,
it turns out that properties (S) and (Sf) are equivalent for plane
conics:

\begin{thm}\label{quadpolonevar}%
Consider a nonconstant polynomial
$$f\>=\>az\ol z+\alpha z^2+\ol\alpha\ol z^2+\beta z+\ol\beta\ol z+c$$
with $a,\,c\in\R$ and $\alpha,\,\beta\in\C$, and let $(f)$ be the
principal ideal generated by $f$ in $\R[x,y]$.
\begin{itemize}
\item[(a)]
$(f)$ has the Archimedean property $(\rm A)$ if, and only if,
$\alpha=0$ and $a\ne0$.
\item[(b)]
$(f)$ has the Quillen property $(\rm Q)$ if, and only if, $\alpha=0$.
\item[(c)]
Properties $(\rm S)$, $(\rm Sf)$ and $(\rm G)$ for the ideal $(f)$
are equivalent among each other, and are also equivalent to ($a\ne0$
\ $\lor$ \ $a=\alpha=0$).
\end{itemize}
\end{thm}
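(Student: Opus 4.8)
The strategy is to reduce the general conic $f = az\ol z + \alpha z^2 + \ol\alpha\ol z^2 + \beta z + \ol\beta\ol z + c$ to a small number of normal forms by holomorphic-affine changes of coordinate $z \mapsto \lambda z + \mu$ (which preserve the semiring $\Sigma_h$ up to the obvious rescaling, hence preserve all of the properties (A), (Q), (S), (Sf), (G)), together with real scaling of $f$ itself, and then settle each normal form by direct appeal to the machinery already in place. The key dichotomy is whether the ``holomorphic Hessian'' coefficient $\alpha$ vanishes.

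\textbf{Case $\alpha = 0$.} Here $f = az\ol z + \beta z + \ol\beta\ol z + c = a|z|^2 + 2\,\Re(\ol\beta z) + c$. If $a \ne 0$, completing the square gives $f = a|z-\gamma|^2 + c'$ for suitable $\gamma \in \C$ and $c' \in \R$; after the coordinate change $z \mapsto z+\gamma$ the ideal is generated by $a|z|^2 + c'$, so $|z|^2 \in \Sigma_h + (f)$ up to sign and an additive real, i.e.\ condition \ref{sohsarchmodi}(iii) holds and we get (A), hence (Q), hence (S), hence (Sf), hence (G). This proves the ``if'' direction of (a) and handles (b), (c) in this subcase. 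If $a = 0$ and $\beta \ne 0$, then $f$ is (after rescaling) $\Re(\ol\beta z) + \tfrac c2$, which is of the form $\Re g(z)$ with $\C[g] = \C[z]$, so Proposition \ref{ellgend} gives (S), hence (Sf) and (G); and (Q) follows because $V_\R(f)$ is a real line in $\C$, handled exactly as in Example \ref{roleofcpt}.1 (every nonnegative polynomial on a line is a single hermitian square mod the ideal). The remaining degenerate possibility $a = \beta = 0$ forces $f$ constant, excluded by hypothesis. Thus whenever $\alpha = 0$ we have (Q), and we have (A) iff additionally $a \ne 0$; this gives the ``only if'' of (a) once we dispose of $\alpha \ne 0$, and it gives the ``if'' of (b) and the relevant part of (c).

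\textbf{Case $\alpha \ne 0$.} By a rotation $z \mapsto e^{i\theta} z$ we may assume $\alpha \in \R_{>0}$, and by real scaling of $f$ that $\alpha = \tfrac12$, so that $\alpha z^2 + \ol\alpha\ol z^2 = \tfrac12(z^2+\ol z^2) = x^2 - y^2$ in real coordinates; absorbing the linear terms by a real translation (which is \emph{not} a holomorphic change of variable, but still preserves the relevant properties only if done carefully — so instead I complete squares within $\R[x,y]$, writing $f = A(x-x_0)^2 - B(y-y_0)^2 + \delta$ or a parabolic/degenerate analogue, keeping in mind that translations in $x,y$ do not preserve $\Sigma_h$ and must be tracked). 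The cleanest route: classify $V_\R(f)$ as a real conic — ellipse, hyperbola, parabola, pair of lines, point, or empty — and argue on each. For an ellipse that is not a circle, (S) holds by Remark \ref{converse} (equivalently by \ref{ellgend} applied to $g = $ a suitable holomorphic coordinate and one extra hermitian square), so (S),(Sf),(G) all hold, while (Q) fails by Corollary \ref{idtheorobst} together with the fact that the ideal of a non-circular ellipse is contained in some $I(a,b)$ — this is the content promised in \cite{PSell}. For a hyperbola, $V_\R(f)$ is noncompact; (Q) fails likewise via \ref{idtheorobst}, and (S)$=$(Sf)$=$(G) hold precisely when the asymptotes are \emph{not} perpendicular — the perpendicular case is the hyperbola $x^2 - y^2 = 1$, i.e.\ $z^2 + \ol z^2 = 2$, and one checks directly that $\ker(\psi_T) = (z^2+\ol z^2-2) \supset$ no, rather $\subset$ some $J(a,U)$ or $I(a,b)$, violating (G); the non-perpendicular case is subnormal by \ref{ellgend} (one of $g, q$ can be chosen holomorphic-linear generating $\C[z]$). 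For a parabola, translate to $y = x^2$-type; this is $\Re g - |q|^2$-shaped and \ref{ellgend} applies, giving (S); again $V_\R(f)$ noncompact and $\alpha\ne0$, so (Q) fails by \ref{idtheorobst}. The degenerate cases (pair of lines, point, empty set) are finite unions of points or lines and are checked by hand against the $I(a,b)$/$J(a,U)$ criterion of Theorem \ref{umker}.

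\textbf{Main obstacle.} The routine-looking but genuinely delicate point is the bookkeeping of which coordinate changes preserve which properties: holomorphic-affine maps $z\mapsto\lambda z+\mu$ preserve (A),(Q),(S),(Sf),(G) all at once, but real-affine maps of $(x,y)$ that are not holomorphic do \emph{not} preserve $\Sigma_h$, so the ``completion of squares'' to bring $f$ into normal form must be organized so that only holomorphic moves are used — and when that is impossible (which is exactly what makes $\alpha\ne0$ obstruct (Q)), one must instead verify the containment $I \subset I(a,b)$ or $I \subset J(a,U)$ by hand. Concretely, for the non-circular ellipse and the perpendicular hyperbola one has to exhibit the points $a,b$ (or the pair $(a,U)$) explicitly and check all the defining equations of the diamond ideal; and for the ``good'' conics one has to verify the hypothesis $\C[g,q_1,\dots,q_r] = \C[z]$ of Proposition \ref{ellgend} in each normal form. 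Once these finitely many explicit verifications are done, (a),(b),(c) follow by assembling the cases, and the equivalence of (S),(Sf),(G) for conics drops out because in every case the three conditions turn out to hold or fail together.
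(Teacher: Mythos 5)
Your treatment of the case $\alpha=0$ is essentially the paper's argument (completing the square to invoke \ref{sohsarchmodi}(iii) when $a\ne0$, and the line case via \ref{roleofcpt}.1 when $a=0$), and your use of Proposition \ref{ellgend} to get (S) when $a\ne0$ is also the paper's route for part (c). But there is a genuine gap in your case $\alpha\ne0$: you refute the Quillen property for the non-circular ellipse, the parabola, and the non-perpendicular hyperbola by appealing to Corollary \ref{idtheorobst}, i.e.\ by claiming the ideal is contained in some $I(a,b)$ or $J(a,U)$. That cannot work: in all those cases $a\ne0$, so by part (c) of the very theorem (and by Remark \ref{converse} and \cite{PSell}) the ideal $(f)$ satisfies (S), (Sf) and hence (G), which by Theorem \ref{umker} means $(f)$ is \emph{not} contained in any $I(a,b)$ or $J(a,U)$. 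In particular your assertion that ``the ideal of a non-circular ellipse is contained in some $I(a,b)$'' is false --- if it were true, the ellipse would fail (Sf), contradicting (c). Noncompactness of $V_\R(f)$ is also no help, since it only obstructs (A), not (Q). So your proposal contains no valid argument for the ``only if'' direction of (b) when $a\ne0$, which is exactly the delicate point where (Q) is strictly stronger than (S) and the ideal-theoretic obstruction is blind.

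The missing idea is the leading-form argument the paper uses (Lemma \ref{leitformsohs}): in one variable, the leading form of any nonzero element of $\Sigma_h$ is $a(z\ol z)^m$ with $a>0$. Take $g=|z-\gamma|^2-r^2>0$ on $V_\R(f)$ with $\gamma\notin V_\R(f)$ and $r$ small, and suppose $g+fh\in\Sigma_h$. If $h$ is a (nonzero) constant, the leading form of $g+fh$ contains the monomial $z^2$ with nonzero coefficient $\alpha h$, which is impossible; if $\deg(h)>0$, then $\lf(f)$ divides $\lf(g+fh)=\lf(f)\lf(h)$, but $\lf(f)=az\ol z+\alpha z^2+\ol\alpha\ol z^2$ with $\alpha\ne0$ is not a monomial in $z,\ol z$ and hence cannot divide $(z\ol z)^m$ --- again a contradiction. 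This settles (b) (and with it the ``only if'' of (a)) uniformly for all $\alpha\ne0$, with no conic classification needed. For part (c) with $a=0$, $\alpha\ne0$, the paper's containment $f\in I(\alpha',\beta')$ is obtained cleanly by writing $f=g+g^*$ with $g\in\C[z]$ quadratic and choosing generic $t\in\R$ with two distinct roots of $g=it$; your ``check by hand'' for the perpendicular hyperbola and the degenerate cases would need to be carried out along these lines to be complete.
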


Note that the result \cite{PSell} Theorem 3.3 on the ellipse is
contained in (c) as a particular case.

For the proof of the theorem we need the following simple
observation. (A~similar argument was used in \cite{PSell}, proof of
Proposition 3.1.) The leading form $\lf(f)$ of $0\ne f\in
\C[\z,\ol\z]$ is the nonvanishing homogeneous part of $f$ of highest
degree. Clearly $\lf(fg)=\lf(f)\lf(g)$ and $\lf(f^*)=\lf(f)^*$.

\begin{lem}\label{leitformsohs}%
($n$ arbitrary)
For $0\ne f=f^*\in\Sigma_h$ we have $\lf(f)\in\Sigma_h$. In
particular, when $n=1$, this implies $\lf(f)=a(z\ol z)^m$ where
$\deg(f)=2m$ and $0<a\in\R$.
\end{lem}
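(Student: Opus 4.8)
We must show: for $0 \ne f = f^* \in \Sigma_h$, the leading form $\lf(f)$ lies in $\Sigma_h$; and in the one-variable case this forces $\lf(f) = a(z\ol z)^m$ with $\deg(f) = 2m$ and $a > 0$.

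**The plan.** The first assertion is a standard ``leading-form'' argument for sums of squares. Write $f = \sum_{k=1}^N |h_k(\z)|^2$ with $h_k \in \C[\z]$, and let $d := \max_k \deg(h_k)$, so that $\deg(f) = 2d$. The key point is that the degree-$2d$ homogeneous part of $\sum_k |h_k|^2 = \sum_k h_k(\z)\ol{h_k(\z)}$ (here I use that the top-degree part of a product is the product of the top-degree parts, as noted just before the lemma: $\lf(fg) = \lf(f)\lf(g)$ and $\lf(f^*) = \lf(f)^*$) equals $\sum_{k : \deg(h_k) = d} |\lf(h_k)|^2$, \emph{provided} this sum does not vanish. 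Since the $\lf(h_k)$ are nonzero homogeneous polynomials of the same degree $d$ in $\C[\z]$, and each $|\lf(h_k)|^2 = \lf(h_k)\ol{\lf(h_k)}$ is a nonzero polynomial whose coefficients form a positive semidefinite (in fact Gram) pattern, the sum $\sum_{k:\deg(h_k)=d} |\lf(h_k)|^2$ is a nonzero element of $\C[\z,\ol\z]$: indeed its value at any point $a$ where some $\lf(h_k)(a) \ne 0$ is strictly positive, and such an $a$ exists. Hence this sum is exactly $\lf(f)$, and it is visibly in $\Sigma_h$.

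**The one-variable specialization.** Now take $n = 1$. Then each $\lf(h_k)$ with $\deg(h_k) = d$ is a monomial $c_k z^d$ with $c_k \ne 0$, so $|\lf(h_k)|^2 = |c_k|^2 (z\ol z)^d$, and therefore $\lf(f) = \bigl(\sum_k |c_k|^2\bigr)(z\ol z)^d =: a(z\ol z)^d$ with $a > 0$. Writing $m := d$ gives $\deg(f) = 2m$ and $\lf(f) = a(z\ol z)^m$ with $0 < a \in \R$, as claimed. (Note $f \ne 0$ guarantees $d \ge 0$ is well-defined and at least one $h_k$ achieves degree $d$.)

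**Anticipated obstacle.** The only subtle point is the non-cancellation claim: a priori, summing the top forms $|\lf(h_k)|^2$ over the maximal-degree indices could conceivably cancel, dropping the degree of $f$ below $2d$. The resolution — and the one real step — is the observation that each $|\lf(h_k)|^2$ is nonnegative as a function on $\C^n$ (being a squared modulus) and strictly positive off the zero locus of $\lf(h_k)$, so a finite sum of them vanishes identically only if every $\lf(h_k)$ in the sum is the zero polynomial, which contradicts $\deg(h_k) = d$. This is where the hermitian (rather than merely real) structure is used, and it is also precisely what makes the conclusion $\lf(f) \in \Sigma_h$ (not just $\lf(f) \in \Sigma$) come out for free. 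Everything else is bookkeeping with the multiplicativity of $\lf$.
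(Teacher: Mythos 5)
Your proof is correct and is essentially the paper's argument: the paper simply notes that in a sum $\sum_j|q_j(\z)|^2$ no cancellation of leading forms can occur, which is exactly the non-cancellation point you justify via the pointwise nonnegativity of the top forms $|\lf(h_k)|^2$. Your write-up just makes that one-line observation explicit, including the $n=1$ specialization.
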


The lemma is obvious since in a sum $\sum_j|q_j(\z)|^2$, no
cancellation of leading forms can occur.

\begin{proof}[Proof of Theorem \ref{quadpolonevar}]
Assume $\alpha=0$ and $a\ne0$. Then the identity
$$af\>=\>\bigl|az+\ol\beta\bigr|^2+(ac-|\beta|^2),$$
combined with Proposition \ref{sohsarchmodi}, shows that $\Sigma_h
+(f)$ is archimedean and (hence) contains every polynomial $g$ with
$g>0$ on $V_\R(f)$.
If $\alpha=a=0$ then $f$ is linear, and after a holomorphic change of
variables we may assume $f=\frac1{2i}(z-\ol z)=y$.
By Remark \ref{roleofcpt}.1 we see that $\Sigma_h+(f)$ contains every
$g\in\R[x,y]$ with $g\ge0$ on $V_\R(f)$.

Conversely, we show that $\Sigma_h+(f)$ cannot contain all
polynomials strictly positive on $V_\R(f)$ when $\alpha\ne0$. Indeed,
assume $\alpha\ne0$ and choose $\gamma\in\C$ with $\gamma\notin
V_\R(f)$. For sufficiently small real $r>0$, the polynomial
$g=|z-\gamma|^2-r^2$ is strictly positive on $V_\R(f)$. Assuming
$g\in\Sigma_h+(f)$ would mean $g+fh\in\Sigma_h$ for some $h\in
\C[z,\ol z]$, and necessarily $h\ne0$. When $h$ is constant then
$l(g+fh)$ contains $\lambda z^2$ for some $\lambda\ne0$,
contradicting Lemma \ref{leitformsohs}. Otherwise $\deg(h)>0$, and
then $\lf(f)$ divides $\lf(fh)=\lf(g+fh)$, again contradicting
\ref{leitformsohs}.

We have thus proved (a) and (b). For the proof of (c) we easily
dispense with the linear case $a=\alpha=0$,
and can assume $\deg(f)=2$. If $a=0$ then $f=g+g^*$ with a quadratic
holomorphic polynomial $g\in\C[z]$. For generic choice of $t\in\R$
there are two different numbers $\alpha\ne\beta$ in $\C$ with
$g(\alpha)=g(\beta)=it$. For any such pair we have $f(\alpha)=
f(\beta)=f(\alpha,\ol\beta)=0$, and hence $f\in I(\alpha,\beta)$.
For $a=0$, therefore, the ideal $(f)$ does not satisfy condition (G),
and \emph{a~fortiori} does not satisfy conditions (S) and (Sf), by
\ref{umker}.

On the other hand, assume $a\ne0$. Then $f$ has the form
$$f\>=\>\Re g(z)+a|z|^2$$
with $g\in\C[z]$. According to Proposition \ref{ellgend}, every
bounded operator $T$ satisfying $f(T,T^*)=0$ is subnormal. So $(f)$
satisfies conditions (S), (Sf) and (G) in this case.
\end{proof}

\begin{rems}\label{remsquadpol}%
\hfil
\smallskip

1.\
We rephrase part of Theorem \ref{quadpolonevar} in geometric terms,
and assume $\deg(f)=2$ and $V_\R(f)\ne\emptyset$ for simplicity. Then
$\Sigma_h+(f)$ contains all polynomials positive on $V_\R(f)$ if and
only if $V_\R(f)$ is a circle. On the other hand, the identity
$f(T,T^*)=0$ implies subnormality for a bounded operator $T$ if and
only if $V_\R(f)$ is \emph{not} a hyperbola with perpendicular
asymptotes (and neither a union of two perpendicular lines).
\smallskip

2.\
From Theorem \ref{quadpolonevar} we see in particular that there
exist ideals $I\subset\R[x,y]$ with $V_\R(I)=\emptyset$ for which
$-1\notin\Sigma_h+I$. This is in striking contrast to the case of
usual sums of squares, where it is well known that $V_\R(I)=
\emptyset$ implies $-1\in\Sigma+I$.
Such ideals may well have the subnormality property (S). For example,
this is so for $I=(ax^2+by^2+c)$ with $a,\,b,\,c>0$ and $a\ne b$.
\end{rems}

\begin{lab}\label{pu27jun}%
For (reduced) plane conics, the normality condition (Sf) for
finite-dimensional Hilbert spaces already implies the subnormality
condition (S) for arbitrary Hilbert spaces, as shown in
\ref{quadpolonevar}. We now show that this ceases to hold when we
take a suitable nonreduced version of a conic.

To this end consider the $*$-invariant ideal
$$J\>=\>\Bigl((z-1)(z\ol z-1),\ (\ol z-1)(z\ol z-1)\Bigr)$$
in $\C[z,\ol z]$, respectively its real version
$$I\>=\>J\cap\R[x,y]\>=\>(x^2+y^2-1)\cdot(x-1,y).$$
The ideal corresponds to the unit circle with nilpotents added at one
point.
We will see that hermitian sums of squares modulo~$I$ behave quite
different than modulo~$\sqrt I$.

Let $T\in B(E)$ satisfy $I\subset\ker(\psi_T)$, that is,
\begin{equation}\label{eqt}%
(T^*-\id)(T^*T-\id)\>=\>0.
\end{equation}
We decompose the Hilbert space as $E=\ker(T-\id)\oplus\ker(T-\id)
^\bot$. With respect to this decomposition, $T$ has a block matrix
representation
$$T\>=\>\begin{pmatrix}\id&A\\0&B\end{pmatrix}$$
with $\ker(A)\cap\ker(B-\id)=\{0\}$.
From \eqref{eqt} we deduce $A(B-\id)=0$ and $A^*A+(B^*B-\id)(B-\id)
=0$.
The second identity implies that $B-\id$ is actually injective.
If $\dim(E)<\infty$, then $B-\id$ is invertible, and we get $A=0$ and
$B^*B-\id=0$. In short, $T$ is unitary. Every (finite-dimensional)
matrix annihilated by the ideal $J$ is therefore unitary, and hence
normal.

On the other hand, we will produce an operator $T$ acting on $E=
\ell^2(\N)$ such that $T$ is annihilated by the ideal $J$ and $T$ is
not subnormal. Let $E$ have Hilbert basis $e_k$ ($k\ge0$), and let
$S\colon e_k\mapsto e_{k+1}$ ($k\ge0$) be the unilateral shift. Let
$\pi$ be the orthogonal projection onto the space generated by $e_0$,
and define $T=S+\pi$. A direct computation, supported by the
relations
$$S^*S=\id,\quad SS^*=\id-\pi,\quad \pi S=0,$$
yields $(T^*-\id)(T^*T-\id)=0$.
But $T$ is not subnormal. Indeed, any subnormal operator $X$
satisfies the hyponormality inequality $[X^*,X]\ge0$, evident from
the matrix decomposition of a normal extension $N=\choose{X\ Y}
{0\ Z}$ and the equation $[N^*,N]=0$.
The commutator $[T^*,T]=\pi-S\pi-\pi S^*$ acts on the span of $e_0$
and $e_1$ as $\begin{pmatrix}1&-1\\-1&0\end{pmatrix}$, and this
is not a nonnegative operator. In summary, this shows:
\end{lab}

\begin{prop}\label{sfnots}%
Let $I\subset\R[x,y]$ be the above ideal, corresponding to the unit
circle with a thickened point. Then $I$ satisfies condition
$(\rm Sf)$, but not~$(\rm S)$ (and a~fortiori, not~$(\rm A)$). In
contrast, its reduced version $\sqrt I$ satisfies $(\rm A)$ (and
therefore also $(\rm S)$ and $(\rm Sf)$).
\qed
\end{prop}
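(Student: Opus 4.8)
The plan is to treat the four assertions in turn --- that $I$ has $(\rm Sf)$, that $I$ fails $(\rm S)$, that $I$ fails $(\rm A)$, and that $\sqrt I$ has $(\rm A)$ --- using throughout the identification (carried out in \ref{pu27jun}) of the condition $I\subset\ker(\psi_T)$ with the single operator identity $(T^*-\id)(T^*T-\id)=0$, equivalently $(T^*T-\id)(T-\id)=0$. For $(\rm Sf)$, let $T$ be an endomorphism of a finite-dimensional Hilbert space $E$ with $I\subset\ker(\psi_T)$. I would decompose $E=\ker(T-\id)\oplus\ker(T-\id)^\bot$; with respect to this splitting $T$ is upper block-triangular, with diagonal blocks $\id$ and some $B$ and upper-right block some $A$, and by construction $\ker(A)\cap\ker(B-\id)=\{0\}$. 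Expanding the defining identity in blocks gives $A(B-\id)=0$ and $A^*A+(B^*B-\id)(B-\id)=0$; the second relation forces $B-\id$ to be injective (if $(B-\id)\xi=0$ then $\|A\xi\|^2=\langle A^*A\xi,\xi\rangle=0$, so $\xi\in\ker(A)\cap\ker(B-\id)$), hence invertible since $\dim E<\infty$, and then the first relation yields $A=0$ and the second $B^*B=\id$. Thus $T$ is unitary, in particular normal, so $(\rm Sf)$ holds.

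Next I would refute $(\rm S)$ by producing a single bounded operator annihilated by $I$ that is not subnormal; since $(\rm A)\To(\rm Q)\To(\rm S)$ by Propositions \ref{sohsarchmodi} and \ref{synth}, $(\rm A)$ then fails \emph{a~fortiori}. On $\ell^2(\N)$ with orthonormal basis $(e_k)_{k\ge0}$ take $T=S+\pi$, where $S\colon e_k\mapsto e_{k+1}$ is the unilateral shift and $\pi$ is the orthogonal projection onto $\C e_0$. Using $S^*S=\id$, $SS^*=\id-\pi$ and $\pi S=0$ one computes $T^*T=\id+\pi$ and $T^*\pi=(S^*+\pi)\pi=\pi$, whence $(T^*-\id)(T^*T-\id)=(T^*-\id)\pi=0$, so $I\subset\ker(\psi_T)$. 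To see that $T$ is not subnormal I would invoke that subnormal operators are hyponormal, i.e.\ $[T^*,T]\ge0$, and compute $[T^*,T]=T^*T-TT^*=\pi-S\pi-\pi S^*$; on $\mathrm{span}\{e_0,e_1\}$ this is represented by $\begin{pmatrix}1&-1\\-1&0\end{pmatrix}$, which is not positive semidefinite, so $T$ is not even hyponormal.

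Finally, for $\sqrt I$: since $g:=x^2+y^2-1$ vanishes at $(1,0)$ we have $g\in(x-1,y)$, so $I=(g)\cdot(x-1,y)$ satisfies $g^2\in I\subset(g)$; as $g$ is irreducible the ideal $(g)$ is prime, hence radical, and therefore $\sqrt I=(g)=(x^2+y^2-1)$. Now $|z|^2-1=g\in\sqrt I$ has the form $||\z||^2+p+a$ with $p=0\in\Sigma_h$ and $a=-1\in\R$, so $\sqrt I$ meets condition (iii) of Proposition \ref{sohsarchmodi} and thus has the Archimedean property $(\rm A)$; properties $(\rm S)$ and $(\rm Sf)$ for $\sqrt I$ then follow from the implication chain. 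The only genuinely delicate step is the counterexample in the second paragraph --- guessing the operator $T=S+\pi$ and verifying that its self-commutator fails to be positive; everything else is routine block-matrix bookkeeping or a direct appeal to results already established.
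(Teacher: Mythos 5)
Your proposal is correct and follows essentially the same route as the paper (paragraph \ref{pu27jun}): the same block decomposition along $\ker(T-\id)$ with the identities $A(B-\id)=0$ and $A^*A+(B^*B-\id)(B-\id)=0$ to get normality in finite dimensions, the same operator $T=S+\pi$ on $\ell^2(\N)$ refuted via hyponormality, and the archimedean property of $\sqrt I=(x^2+y^2-1)$ via \ref{sohsarchmodi}. Your added details (the injectivity argument for $B-\id$ and the explicit identification of $\sqrt I$) are correct fill-ins of steps the paper leaves implicit.
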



\section{Semi-algebraic sets}

\begin{lab}\label{subtle}%
In Section \ref{sect:hermsossubnop} we studied the question whether
every polynomial strictly positive on a real algebraic set
$X\subset\C^n$ is a hermitian sum of squares on $X$. We now extend
this question to (real) semi-algebraic subsets of $\C^n$.
Algebraically, this means that instead of an ideal $I\subset
\R[\x,\y]$ and the semiring $\Sigma_h+I$ we consider \emph{hermitian
modules}, that is, modules over the semiring $\Sigma_h$ (see
\ref{dfnarch}). This means that the real algebraic set $X=V_\R(I)$
is replaced by the closed set
$$X_M\>=\>\{a\in\C^n\colon\all f\in M\ f(a)\ge0\}$$
(see \ref{archpss}). If the hermitian module $M$ is finitely
generated (or, more generally, if the quadratic module generated by
$M$ is finitely generated), the closed set $X_M$ is basic closed,
i.e., there are finitely many $f_1,\dots,f_k\in\R[\x,\y]$ with $X_M=
\{a\in\C^n$: $f_1(a)\ge0,\dots,f_k(a)\ge0\}$.
\end{lab}

\begin{lab}
Each of the four properties of an ideal $I\subset\R[\x,\y]$ labelled
(A), (Q), (S), (Sf) that were discussed in the first part of this
paper is in fact a property of the semiring $S=\Sigma_h+I$, i.e., can
be expressed in terms of $S$. We now extend these properties to
arbitrary hermitian modules $M\subset\R[\x,\y]$:
\begin{itemize}
\item[(A)]
$M$ is archimedean;
\item[(Q)]
$M$ contains every $f\in\R[\x,\y]$ with $f>0$ on $X_M$;
\item[(S)]
every commuting tuple $T$ of bounded operators in a Hilbert space
satisfying $p(T,T^*)\ge0$ for every $p\in M$ is subnormal;
\item[(Sf)]
every commuting tuple $T$ of complex matrices satisfying $p(T,T^*)
\ge0$ for every $p\in M$ is normal.
\end{itemize}
When $M=\Sigma_h+I$ for some ideal $I$, the above properties agree
with the respective properties of the ideal~$I$, as defined in
\ref{dfnpropa} and \ref{dfnssf}.
\end{lab}

We start by the following characterization of archimedean hermitian
modules, thereby generalizing part of Proposition \ref{sohsarchmodi}:

\begin{lem}\label{archhermmod}%
A hermitian module $M\subset\R[\x,\y]$ is archimedean if and only if
$c-||\z||^2\in M$ for some real number~$c$.
\end{lem}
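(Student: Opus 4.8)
The ``only if'' direction is immediate: if $M$ is archimedean then, applying the definition of an archimedean module (\ref{dfnarch}) to the element $||\z||^2\in\R[\x,\y]$, there is a real number $c$ with $c-||\z||^2\in M$.

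For the converse, assume $c-||\z||^2\in M$. We may enlarge $c$, so $c>0$; and after rescaling the variables $z_j$ linearly (which carries $\Sigma_h$ to $\Sigma_h$ and preserves archimedeanness) we may assume $c=1$. The plan is to study the set of \emph{$M$-bounded} elements
$$H := \bigl\{ f\in\R[\x,\y] \colon N-f\in M \text{ and } N+f\in M \text{ for some } N\in\R_{\ge 0} \bigr\}$$
and to prove $H=\R[\x,\y]$, which is precisely the statement that $M$ is archimedean. First I would record two elementary facts. (i)~$H$ is an $\R$-linear subspace of $\R[\x,\y]$; this uses only $M+M\subset M$ and $\R_{\ge0}M\subset M$, the latter because $\R_{\ge0}\subset\Sigma_h$. (ii)~$\Sigma_h\subset M$, since $M$ is a $\Sigma_h$-module containing~$1$; in particular, adding $\sum_{k\ne j}|z_k|^2\in\Sigma_h$ to $1-||\z||^2\in M$ shows $1-|z_j|^2\in M$ for each~$j$.

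The core of the argument is two steps. \emph{Step~1:} for every multi-index $\mu$ one has $1-|\z^\mu|^2\in M$, and hence $|\z^\mu|^2\in H$ (the opposite bound $1+|\z^\mu|^2\in M$ being automatic, as $|\z^\mu|^2\in\Sigma_h\subset M$). I would prove this by induction on $|\mu|$: writing $\mu=\mu'+e_j$ and using
$$1-|z_j|^2\,|\z^{\mu'}|^2 = \bigl(1-|\z^{\mu'}|^2\bigr) + |\z^{\mu'}|^2\bigl(1-|z_j|^2\bigr),$$
the first summand lies in $M$ by induction and the second because $|\z^{\mu'}|^2\in\Sigma_h$ and $1-|z_j|^2\in M$. \emph{Step~2:} for all multi-indices $\mu,\nu$ one has $\Re(\z^\mu\ol\z^\nu)\in H$ and $\Im(\z^\mu\ol\z^\nu)\in H$. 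Here I would put $p:=\z^\mu$ and $q:=\z^\nu$; since $p\pm q$ and $p\pm iq$ are holomorphic, the four polynomials $|p\pm q|^2$, $|p\pm iq|^2$ lie in $\Sigma_h\subset M$, and the polarization identities
$$|p\pm q|^2 = |p|^2+|q|^2\pm 2\Re(p\bar q),\qquad |p\pm iq|^2 = |p|^2+|q|^2\pm 2\Im(p\bar q)$$
together with Step~1 in the form $1-|p|^2,1-|q|^2\in M$ give $2-2\Re(p\bar q)=(1-|p|^2)+(1-|q|^2)+|p-q|^2\in M$ and, using $|p+q|^2$ instead, $2+2\Re(p\bar q)\in M$; hence $\Re(p\bar q)\in H$. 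The imaginary part is treated identically with $|p\pm iq|^2$ in place of $|p\pm q|^2$. Since $p\bar q=\z^\mu\ol\z^\nu$, this is the claim.

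To finish, the $*$-invariant polynomials $\Re(\z^\mu\ol\z^\nu)$ and $\Im(\z^\mu\ol\z^\nu)$ span $\R[\x,\y]$ as an $\R$-vector space (every $f=f^*$ in $\C[\z,\ol\z]$ is visibly an $\R$-linear combination of them), so $H$, being an $\R$-subspace that contains all of them, equals $\R[\x,\y]$; thus $M$ is archimedean. The point I expect to need the most care is conceptual rather than computational: one cannot simply invoke the classical criterion for archimedean \emph{quadratic} modules, which is proved by showing that the bounded elements form a \emph{subring} and using that $\R[\x,\y]$ is a finitely generated $\R$-algebra, because that argument needs $M$ to contain \emph{all} squares. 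Here $M$ is only a module over $\Sigma_h$, so it need not contain the square of an arbitrary real polynomial, and accordingly $H$ is not visibly closed under multiplication. The remedy is Step~2: instead of generating $\R[\x,\y]$ as an algebra, one exhibits an $\R$-linear spanning set of it inside $H$ directly, using that $\Sigma_h$ --- being generated by the $|h|^2$ with $h$ holomorphic --- is stable under precisely the operations required, namely multiplying $|z_j|^2$ by a hermitian square (Step~1) and forming $|p\pm q|^2$ and $|p\pm iq|^2$ for holomorphic $p,q$ (Step~2).
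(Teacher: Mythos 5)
Your proof is correct, and it is built from the same two engines as the paper's argument: the product identity $1-|z_j|^2|\z^{\mu'}|^2=(1-|\z^{\mu'}|^2)+|\z^{\mu'}|^2(1-|z_j|^2)$ (the paper's $ab-|fg|^2=a(b-|g|^2)+|g|^2(a-|f|^2)$ with $a=b=1$) and a polarization/parallelogram identity for hermitian squares. The bookkeeping differs, though. The paper introduces $A=\{p\in\C[\z]\colon -|p|^2\in\R+M\}$, proves $A$ is a \emph{subring} of $\C[\z]$ containing $\C$ and the $z_j$ (products via the identity above, sums via $|f+g|^2+|f-g|^2=2(|f|^2+|g|^2)$), and then finishes by writing an arbitrary $f\in\R[\x,\y]$ as $\sum_j|p_j|^2-\sum_k|q_k|^2$. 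You instead bound only the \emph{monomial} hermitian squares $|\z^\mu|^2$ by induction, skip any sum-closure statement, and transfer to general $f$ by polarizing pairs of monomials ($|p\pm q|^2$, $|p\pm iq|^2$) to put $\Re(\z^\mu\ol\z^\nu)$ and $\Im(\z^\mu\ol\z^\nu)$ into the space $H$ of bounded elements, which then spans $\R[\x,\y]$ linearly. What the paper's route buys is brevity and structure (one ring-theoretic lemma plus the standard fact that every element of $\R[\x,\y]$ is a difference of hermitian sums of squares); what yours buys is that this last decomposition is made completely explicit at the level of monomials, and your closing remark correctly identifies why the classical archimedean criterion for quadratic modules cannot simply be quoted here --- the same reason the paper gives its own self-contained proof. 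Minor remarks: the normalization $c=1$ via rescaling is harmless but unnecessary (the induction works verbatim with $c-|z_j|^2\in M$ after enlarging $c$), and your ``only if'' direction is indeed immediate from the definition in \ref{dfnarch}, which the paper leaves tacit.
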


\begin{proof}
Assuming $c-||\z||^2\in M$ we have to show $\R+M=\R[\x,\y]$. For
this let $A:=\{p\in\C[\z]\colon-|p|^2\in\R+M\}$. It suffices to prove
$A=\C[\z]$. Indeed, any $f\in\R[\x,\y]$ can be written $f=\sum_j
|p_j|^2-\sum_k|q_k|^2$ with $p_j,\,q_k\in\C[\z]$; if $q_k\in A$ for
every $k$, then $f\in\R+M$.

From $c-|z_j|^2=(c-||\z||^2)+\sum_{k\ne j}|z_k|^2$ we see $z_j\in A$
for $j=1,\dots,n$. Therefore (and since $\C\subset A$) it is enough
to prove that $A$ is a ring. From $a-|f|^2$, $b-|g|^2\in M$ with
$a,\,b\ge0$ we get
$$ab-|fg|^2\>=a(b-|g|^2)+|g|^2(a-|f|^2)\>\in M,$$
so $A$ is closed under products. From $|f+g|^2+|f-g|^2=2(|f|^2+
|g|^2)$ we see that $A$ is also closed under sums. The lemma is
proved.
\end{proof}

Before we start discussing a Positivstellensatz for hermitian
modules, we need to mention a subtle point. The archimedean
Positivstellensatz \ref{archpss} holds for modules over archimedean
semirings, but \emph{not} in general for archimedean modules over
semi\-rings. This distinction is relevant for hermitian modules, as
the following example shows:

\begin{example}\label{pssarchhermmod}%
The hermitian module $M=\Sigma_h+\Sigma_h(1-||\z||^2)$ is
archimedean by Lemma \ref{archhermmod}. But there exist polynomials
that are strictly positive on the closed unit ball $X_M$ and are not
contained in $M$. In fact, $\epsilon+(1-||\z||^2)^2$ is such a
polynomial for $0<\epsilon<1$. To see this, assume
$$\epsilon+(1-||\z||^2)^2\>=\>p+q(1-||\z||^2)$$
with $p$, $q\in\Sigma_h$. Comparing constant coefficients gives
$q(0)\le1+\epsilon$, while comparing coefficients of $z_1\ol z_1$
gives $-2\ge-q(0)$, i.e.\ $q(0)\ge2$, since the coefficient of
$z_1\ol z_1$ in any hermitian sum of squares is nonnegative.

The point is that, although the hermitian module $M$ is archimedean,
$M$ is not a module over any archimedean semiring.

The proper ``quantization'' of the module $M$ is a linear operator
$T$ acting on a Hilbert space, subject to the contractivity condition
$$I-T^*T\>\ge\>0.$$
It is clear that not every contractive operator $T$ is subnormal.
\end{example}

Example \ref{pssarchhermmod} has shown (c.f.\ also Proposition
\ref{sohsarchmodi}):

\begin{lem}
Consider the following two properties of a hermitian module $M$ in
$\R[\x,\y]$:
\begin{itemize}
\item[(i)]
$X_M$ is compact, and $M$ has the Quillen property $(\rm Q)$;
\item[(ii)]
$M$ is archimedean $(\rm A)$.
\end{itemize}
Then (i) implies (ii), but the converse fails in general.
\qed
\end{lem}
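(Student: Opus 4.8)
The plan is to prove the two assertions separately, and both follow from material already assembled in the excerpt.

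First, the implication (i) $\To$ (ii). Suppose $X_M$ is compact and $M$ has the Quillen property (Q). Since $X_M$ is compact, it is contained in a ball, so there is a real number $c$ with $c-||\z||^2>0$ on $X_M$; strict positivity can be arranged by enlarging $c$. By (Q), the polynomial $c-||\z||^2$ lies in $M$. Lemma \ref{archhermmod} then immediately gives that $M$ is archimedean, i.e.\ property (A) holds. This is the easy direction and requires essentially no new computation.

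For the failure of the converse, I would simply invoke Example \ref{pssarchhermmod}, which exhibits a hermitian module $M=\Sigma_h+\Sigma_h(1-||\z||^2)$ that is archimedean (by Lemma \ref{archhermmod}, since $1-||\z||^2\in M$) but does not contain the strictly positive polynomial $\epsilon+(1-||\z||^2)^2$ on its associated set $X_M$, the closed unit ball. Hence $M$ satisfies (A) but not (Q), so (ii) does not imply (i). The verification that $M$ fails (Q) was already carried out in the example by comparing the constant coefficient and the coefficient of $z_1\ol z_1$ in a hypothetical representation $\epsilon+(1-||\z||^2)^2=p+q(1-||\z||^2)$ with $p,q\in\Sigma_h$.

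There is really no hard part here: the lemma is a packaging of Example \ref{pssarchhermmod} together with the trivial compactness observation. The only point worth stating carefully — and the conceptual content highlighted just before the lemma — is \emph{why} (A) does not force (Q) for modules (as opposed to semirings): an archimedean \emph{module} need not be a module over an archimedean \emph{semiring}, so the archimedean Positivstellensatz \ref{archpss} does not apply. The example makes this concrete, and I would phrase the proof as: ``(i) $\To$ (ii) follows from Lemma \ref{archhermmod} as above; that the converse fails is exactly the content of Example \ref{pssarchhermmod}.'' Since the excerpt already says the lemma is proved by that example, the write-up is a two-line proof with a $\qed$.
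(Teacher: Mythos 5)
Your proof is correct and follows the paper's own route: the paper proves the implication via the compactness observation that $c-||\z||^2$ is strictly positive on $X_M$ (combined with Lemma \ref{archhermmod}, cf.\ Proposition \ref{sohsarchmodi}), and refutes the converse exactly by Example \ref{pssarchhermmod}. Nothing further is needed.
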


Here we are mainly interested in a Positivstellensatz, that is, in
the Quillen property (Q), in the case when $X_M$ is compact.
Therefore, we will often \emph{assume} that $M$ is archimedean (which
implies that $X_M$ is compact), and try to find additional properties
for $M$ that will imply the Positivstellensatz. Verifying the
archimedean property of a concretely given hermitian module $M$ is
usually easy, using the criterion of Lemma \ref{archhermmod}.

One instance where we get the Positivstellensatz for free is the
following:

\begin{prop}\label{vaquillthm}%
Let $I\subset\R[\x,\y]$ be any ideal with the archimedean property
$(\rm A)$. Then for any $p_1,\dots,p_r\in\R[\x,\y]$, the hermitian
module
$$M\>=\>I+\Sigma_h+p_1\Sigma_h+\cdots+p_r\Sigma_h$$
has the Quillen property $(\rm Q)$.
\end{prop}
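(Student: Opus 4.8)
The plan is to reduce Proposition \ref{vaquillthm} to the archimedean Positivstellensatz \ref{archpss} by producing an \emph{archimedean semiring} $S$ with $M$ a module over $S$ and $X_S = V_\R(I)$. The natural candidate is the semiring $S := \Sigma_h + I$ itself: by hypothesis $I$ has property (A), so $S$ is archimedean by definition. The module $M = I + \Sigma_h + p_1\Sigma_h + \cdots + p_r\Sigma_h$ is clearly closed under addition and contains $1$, so the only thing to check is $SM \subseteq M$. Since $S$ is generated as a semiring by $\Sigma_h$ and $I$, and $\Sigma_h\cdot\Sigma_h \subseteq \Sigma_h$, $\Sigma_h\cdot(p_k\Sigma_h) \subseteq p_k\Sigma_h$, $I\cdot\R[\x,\y]\subseteq I$, all the required containments are immediate. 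Thus $M$ is a module over the archimedean semiring $S$.

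Next I would identify $X_M$. Since every generator of $M$ is nonnegative exactly where it is (the $I$-part vanishes on $V_\R(I)$, the $\Sigma_h$-part is everywhere nonnegative, and $p_k\Sigma_h$ contributes the condition $p_k \ge 0$ only in the sense of being dominated), one gets
\[
X_M \;=\; V_\R(I) \cap \{a\in\C^n : p_1(a)\ge 0,\dots,p_r(a)\ge 0\}.
\]
Actually one only needs the inclusion $V_\R(I)\supseteq X_S \supseteq X_M$, or more precisely that any $f\in\R[\x,\y]$ with $f>0$ on $X_M$ is picked up. But here is the key point making the proposition clean: we do \emph{not} restrict to polynomials positive on $X_M$ — we want the full Quillen property for $M$, meaning $M$ contains every $f>0$ on $X_M$. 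Since $M$ is a module over the archimedean semiring $S$, Theorem \ref{archpss} applies directly: $M$ contains every $f\in\R[\x,\y]$ that is strictly positive on $X_M$. That is exactly property (Q) for $M$.

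The only genuinely delicate point — and the one I would write out carefully — is the interaction with Example \ref{pssarchhermmod}: there, an archimedean hermitian module $M$ failed (Q) precisely because it was \emph{not} a module over any archimedean semiring. The whole content of Proposition \ref{vaquillthm} is that replacing the "$\Sigma_h(1-\|\z\|^2)$"-type data by an ideal $I$ with property (A) \emph{does} supply an archimedean semiring underneath (namely $\Sigma_h + I$), because adding ideal elements costs nothing for the semiring axioms, whereas adding $p\Sigma_h$ with $p$ a genuine inequality does not give a semiring. So the proof is essentially: exhibit $S=\Sigma_h+I$, verify $M$ is an $S$-module, invoke \ref{archpss}. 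I expect no real obstacle; the subtlety is purely in correctly matching $X_M$ to the set on which positivity is assumed, which is automatic since $X_M$ is by definition the positivity locus of $M$.
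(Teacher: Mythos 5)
Your proof is correct and is essentially the paper's own argument: exhibit $S=\Sigma_h+I$ as an archimedean semiring (this is exactly property (A) for $I$), check that $M$ is an $S$-module, and apply the archimedean Positivstellensatz \ref{archpss} to conclude that $M$ contains every $f$ strictly positive on $X_M$. The explicit identification of $X_M$ with $V_\R(I)\cap\{p_1\ge0,\dots,p_r\ge0\}$ is not needed for the argument (the paper records it only as a remark), since \ref{archpss} is stated directly in terms of $X_M$.
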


\begin{proof}
$M$ is a module over the archimedean semiring $\Sigma_h+I$, so the
assertion follows from the archimedean Positivstellensatz
\ref{archpss}.
\end{proof}

\begin{rems}
\hfil
\smallskip

1.\
For $M$ as in \ref{vaquillthm}, the associated semi-algebraic set is
$$X_M\>=\>V_\R(I)\cap\Bigl\{a\in\C^n\colon\>p_1(a)\ge0,\>\dots,\>
p_r(a)\ge0\Bigr\}.$$

2.\
In the particular case $I=(1-||\z||^2)$, Proposition \ref{vaquillthm}
was proved by D'Angelo and Putinar (\cite{dAP}, Thm.\ 3.1).
\end{rems}

Generalizing Proposition \ref{synth}, the Positivstellensatz for $M$
implies the following subnormality property for commuting tuples $T$
of bounded operators.

\begin{cor}\label{modules}%
For any hermitian module $M\subset\R[\x,\y]$, we have $(\rm Q)\To
(\rm S)$.
\end{cor}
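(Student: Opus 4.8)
The plan is to mimic the proof of Proposition \ref{synth}, using the closedness of $M_T$ in the finest locally convex topology together with the Halmos--Bram--It\^o criterion as captured by Proposition \ref{sigmasubn}. The statement to prove is that for any hermitian module $M\subset\R[\x,\y]$, property (Q) for $M$ implies property (S) for $M$.

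First I would fix a commuting tuple $T=(T_1,\dots,T_n)$ of bounded operators on a Hilbert space with $p(T,T^*)\ge0$ for every $p\in M$; in the notation of \ref{heredcalc} this says precisely that $M\subset M_T$. Since $M_T$ is a $\Sigma_h$-module (established in \ref{heredcalc}) and contains $M$, it certainly contains the hermitian module $M$; the point is to upgrade this to $\Sigma\subset M_T$. Because $M_T$ is closed with respect to the finest locally convex topology on $\R[\x,\y]$ (again by \ref{heredcalc}), and because $M\subset M_T$, the closure of $M$ in that topology is contained in $M_T$. Now the Quillen property (Q) for $M$ says that every $f\in\R[\x,\y]$ with $f>0$ on $X_M$ lies in $M$; in particular every such $f$ lies in $M_T$. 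It remains to deduce $\Sigma\subset M_T$ from this, i.e.\ to handle polynomials in $\Sigma$ that are merely $\ge0$ (not $>0$) on $X_M$, and polynomials that may be negative somewhere on $X_M$ — but the latter cannot occur: any $g=|q|^2\in\Sigma$ is $\ge0$ everywhere on $\C^n$, hence certainly $\ge0$ on $X_M$.

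So the one genuine gap is the passage from strict positivity to nonnegativity: given $g=\sum_j|q_j|^2\in\Sigma$, for each $\varepsilon>0$ the polynomial $g+\varepsilon>0$ on $X_M$, hence $g+\varepsilon\in M\subset M_T$ by (Q). Letting $\varepsilon\to0^+$, the elements $g+\varepsilon$ converge to $g$ in the finest locally convex topology (they lie in the finite-dimensional subspace $g+\R\cdot 1$, on which that topology is the Euclidean one), and since $M_T$ is closed in this topology we conclude $g\in M_T$. Therefore $\Sigma\subset M_T$, and Proposition \ref{sigmasubn} gives that $T$ is subnormal. As $T$ was arbitrary, $M$ has property (S).

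I expect the only subtlety — and it is minor — to be the justification that $g+\varepsilon\to g$ in the finest locally convex topology; this is immediate because convergence in that topology restricted to any finite-dimensional affine subspace is ordinary convergence, and $\{g+\varepsilon:\varepsilon>0\}$ together with its limit $g$ all lie in the line $g+\R\cdot1$. Everything else is a verbatim reprise of the argument in \ref{synth}, with ``$\Sigma_h+I\subset M_T$'' replaced by ``$M\subset M_T$'' and the role of Proposition \ref{sohsarchmodi}'s ideal version of (Q) played by the module version. No new operator-theoretic input beyond Proposition \ref{sigmasubn} is needed.
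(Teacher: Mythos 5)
Your proof is correct and follows essentially the same route as the paper, which simply notes that the argument of Proposition \ref{synth} carries over verbatim: replace $\Sigma_h+I\subset M_T$ by $M\subset M_T$ and use closedness of $M_T$ in the finest locally convex topology to pass from strictly positive polynomials to all of $\Sigma$. Your explicit $g+\varepsilon\to g$ limiting step along the line $g+\R\cdot1$ is exactly the detail the paper leaves implicit in that argument.
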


\begin{proof}
The proof of Proposition \ref{synth} carries over (essentially)
verbatim.
\end{proof}

We next discuss a refinement of the last corollary, in which we are
going to weaken condition (Q) and strengthen condition (S).

\begin{lab}\label{dfnsmp}%
Let $M\subset\R[\x,\y]$ be a hermitian module. Recall \cite{Sm} that
$M$ is said to have the \emph{Strong Moment Property} if the
following holds:
\begin{itemize}
\item[(\rm SMP)]
Every linear functional $L\colon\R[\x,\y]\to\R$ with $L|_M\ge0$ is
integration with respect to some positive Borel measure on $X_M$.
\end{itemize}
\end{lab}

\begin{lem}\label{2.2}%
Let $M\subset\R[\x,\y]$ be a hermitian module. The Quillen property
$(\rm Q)$ for $M$ implies the strong moment property $(\rm SMP)$
for~$M$. The converse is true if $M$ is archimedean.
\end{lem}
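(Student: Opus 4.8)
The plan is to establish both implications through the duality between positivity of linear functionals and membership in a closed convex cone, using the finest locally convex topology on $\R[\x,\y]$ as the bridge.

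\medskip

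\textbf{Proof sketch.} First suppose $M$ has property (Q). Let $L\colon\R[\x,\y]\to\R$ be a linear functional with $L|_M\ge0$; we must realize $L$ as integration against a positive Borel measure on $X_M$. If $f\in\R[\x,\y]$ satisfies $f>0$ on $X_M$, then (Q) gives $f\in M$, hence $L(f)\ge0$. Thus $L$ is nonnegative on all polynomials strictly positive on $X_M$, and by a standard approximation argument $L(f)\ge0$ for every $f\ge0$ on $X_M$ (add $\epsilon g$ for a suitable fixed $g>0$ on $X_M$ and let $\epsilon\downarrow0$; one should first note that $1\in M$ forces $L(1)\ge0$, and that $L$ can be assumed nonzero). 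Then Haviland's theorem produces a positive Borel measure $\mu$ on the closed set $X_M\subset\C^n$ with $L(f)=\int f\,d\mu$ for all $f$, which is exactly (SMP). The only delicate point here is to ensure Haviland applies, i.e.\ that $L$ is nonnegative on the \emph{closed} set $X_M$ and that $X_M$ is closed (which it is, by definition), and that the moment functional is genuinely representable—this is where one cites Haviland's theorem for closed subsets of $\R^{2n}$.

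\medskip

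For the converse, assume $M$ is archimedean and has (SMP); we want (Q). Since $M$ is archimedean, $X_M$ is compact. Let $f\in\R[\x,\y]$ with $f>0$ on $X_M$; suppose for contradiction that $f\notin M$. Because $M$ is archimedean, $M$ is closed in the finest locally convex topology on $\R[\x,\y]$ (this is a standard fact: an archimedean cone is closed in this topology, since $\R+M=\R[\x,\y]$ and one can separate using the unit $1$ as an order unit). Hence by the separation theorem for convex sets in locally convex spaces, there is a linear functional $L\colon\R[\x,\y]\to\R$ with $L|_M\ge0$ but $L(f)<0$. We may normalize so that $L\ne0$; then $L(1)>0$ (since otherwise $L$ would vanish on $\R\cdot1$ and, being $\ge0$ on the archimedean $M$, would have to be identically zero). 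By (SMP), $L(g)=\int_{X_M}g\,d\mu$ for a positive Borel measure $\mu$ on $X_M$ with $\mu\ne0$. But then $L(f)=\int_{X_M}f\,d\mu>0$ because $f>0$ on the (compact) set $X_M$ and $\mu$ is a nonzero positive measure, contradicting $L(f)<0$. Therefore $f\in M$, which is (Q).

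\medskip

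The main obstacle is the first implication's approximation step: passing from ``$L\ge0$ on polynomials strictly positive on $X_M$'' to ``$L\ge0$ on polynomials nonnegative on $X_M$,'' and then invoking Haviland on the possibly unbounded closed set $X_M$. The archimedean case of the converse is comparatively clean, since compactness of $X_M$ makes the final integral estimate immediate and the closedness of an archimedean module in the finest locally convex topology is a well-known lemma from real algebra (see \cite{PD} or \cite{sch:guide}).
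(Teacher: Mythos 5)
Your first implication is essentially the paper's own argument: from (Q) one gets $f+\epsilon\in M$ for every $f\ge0$ on $X_M$ and every $\epsilon>0$ (equivalently $f\in\ol M$ for the finest locally convex topology, on which every linear functional is continuous), hence $L(f)\ge0$ whenever $L|_M\ge0$, and the Riesz--Haviland theorem for the closed set $X_M$ gives the representing measure. No objection there.

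The converse contains a genuine gap: you base the existence of the separating functional on the claim that an archimedean module is \emph{closed} in the finest locally convex topology. That is not a standard fact, and it is false in general. For instance, for an archimedean \emph{quadratic} module $M$ (a special case of a hermitian module), every polynomial strictly positive on $X_M$ lies in $M$, so every polynomial nonnegative on $X_M$ lies in $\ol M$; closedness would therefore force $M$ to contain all polynomials nonnegative on $X_M$, which fails for standard archimedean examples such as $\Sigma+\Sigma(1-||\z||^2)$ in three or more real variables (archimedean but not saturated). Fortunately closedness is also not what you need, and your parenthetical remark about the order unit is the correct idea, misplaced: since $M$ is archimedean, $1$ is an algebraic interior point of the convex cone $M$ (for every $g$ there is $c>0$ with $c\pm g\in M$), so $M$ has nonempty interior in the finest locally convex topology, and Eidelheit's separation theorem (this is what the paper invokes) produces a nonzero linear functional $L$ with $L|_M\ge0$ and $L(f)\le0$ for any $f\notin M$ --- note only $L(f)\le0$, not $L(f)<0$, is available without closedness, but that suffices. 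From there your argument is the paper's: $L(1)>0$ by the archimedean property, normalize $L(1)=1$, apply (SMP) to obtain a probability measure $\mu$ on the compact set $X_M$, and $L(f)=\int_{X_M}f\,d\mu>0$ contradicts $L(f)\le0$. So the skeleton of your proof is right; replace the closedness claim by the interior-point (Eidelheit) separation and the proof is complete.
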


\begin{proof}
Assume that $M$ has property (Q). Then any $f\in\R[\x,\y]$
nonnegative on $X_M$ lies in the closure $\ol M$ with respect to the
finest locally convex topology on $\R[\x,\y]$.
So $L|_M\ge0$ implies $L(f)\ge0$, and therefore $L$ is integration
with respect to some positive Borel measure on $X_M$, according to
the Riesz-Haviland theorem \cite{Hv}.
Conversely, let $M$ be archimedean and satisfy (SMP), and assume that
there is $f\in\R[\x,\y]$ with $f>0$ on $X_M$ but $f\notin M$. By
Eidelheit's separation theorem (e.g., \cite{Ko} \S17.1) there is a
linear functional $L\colon\R[\x,\y]\to\R$ with $L(1)=1$, $L|_M\ge0$
and $L(f)\le0$.
By assumption, $L$ is integration with respect to a probability
measure $\mu$ on $X_M$. We conclude $L(f)=\int_{X_M}f\>d\mu>0$, a
contradiction.
\end{proof}

\begin{lem}\label{surpris}%
For $M$ a hermitian module in $\R[\x,\y]$, consider the following
property:
\begin{itemize}
\item[$(\rm SOS)$]
$\Sigma\subset\ol M$ (closure with respect to the finest locally
convex topology on $\R[\x,\y]$).
\end{itemize}
Then property $(\rm SOS)$ for $M$ implies property $(\rm S)$ for $M$.
The converse is true if $M$ is archimedean.
\end{lem}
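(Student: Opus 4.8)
\textbf{Proof plan for Lemma \ref{surpris}.}
The structure should parallel the proof of Proposition \ref{synth} (and Corollary \ref{modules}), replacing equality conditions by nonnegativity and the finite-dimensional tools by their closure-aware analogues. For the implication $(\rm SOS)\To(\rm S)$: given a commuting tuple $T$ of bounded operators on a Hilbert space with $p(T,T^*)\ge0$ for all $p\in M$, we have $M\subset M_T$, where $M_T$ is the $\Sigma_h$-module of \ref{heredcalc}. Since $M_T$ is closed in the finest locally convex topology on $\R[\x,\y]$ (noted in \ref{heredcalc}), we get $\ol M\subset M_T$, hence $\Sigma\subset\ol M\subset M_T$. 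By Proposition \ref{sigmasubn} this means exactly that $T$ is subnormal. So $(\rm SOS)\To(\rm S)$ holds with no hypothesis on $M$.

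For the converse under the assumption that $M$ is archimedean: we argue by contraposition, so suppose $\Sigma\not\subset\ol M$. Then there is some $q\in\C[\z,\ol\z]$ with $|q|^2\notin\ol M$. Since $\ol M$ is a closed convex cone in $\R[\x,\y]$ (with its finest locally convex topology) and $M$ is archimedean, $\ol M$ is closed and we may separate: by a Hahn--Banach / Eidelheit argument (as in Lemma \ref{2.2}) there is a linear functional $L\colon\R[\x,\y]\to\R$ with $L(1)=1$, $L|_M\ge0$, and $L(|q|^2)\le 0$, in fact $<0$ after adjusting. The plan is then to use $L$ as a positive semidefinite ``hereditary moment functional'' to build a non-subnormal tuple $T$ with $M\subset M_T$, thereby contradicting $(\rm S)$. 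Concretely: the sesquilinear form on $\C[\z]$ given by $(p_1,p_2)\mapsto L(\ol{p_2}\,p_1)$ is positive semidefinite because $L(|p|^2)\ge 0$ for $p\in\C[\z]$ (indeed $|p|^2\in\Sigma_h\subset M$); modding out its kernel and completing yields a Hilbert space $E$ with a dense image of $\C[\z]$, and multiplication by $z_j$ descends to operators $T_j$ on a dense domain. Archimedeanness of $M$ forces $c-||\z||^2\in M$ for some $c$, so $L(\ol p\,(c-||\z||^2)\,p)\ge 0$, which bounds $\sum_j\|T_j\xi\|^2\le c\|\xi\|^2$ on the dense domain; hence each $T_j$ extends to a bounded operator on $E$, and the tuple commutes. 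One checks $M\subset M_T$: for $f\in M$ and $p\in\C[\z]$, $\bil{f(T,T^*)p(T)\xi_0}{p(T)\xi_0}=L(\ol p\, f\, p)\ge 0$ because $\ol p\,f\,p\in M$, and $p(T)\xi_0$ runs over a dense set. Finally $T$ is not subnormal, since $\bil{\psi_T(|q|^2)\xi_0}{\xi_0}=L(|q|^2)<0$ violates the conclusion of Proposition \ref{sigmasubn}.

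The main obstacle is the analytic bookkeeping in the GNS-type construction: one must verify that the $T_j$ genuinely extend to \emph{bounded} commuting operators on the completed Hilbert space (this is exactly where archimedeanness of $M$ is used, via the bound $\sum_j\|T_j\xi\|^2\le c\|\xi\|^2$), and that the hereditary-calculus map $\psi_T$ for this $T$ recovers $L$ on the relevant cone, i.e.\ $\bil{\psi_T(\ol p\, f\, p)\xi_0}{\xi_0}=L(\ol p\, f\, p)$ for $f\in\C[\z,\ol\z]$ and $p\in\C[\z]$. Once these routine (but slightly delicate) points are settled, the inclusion $M\subset M_T$ and the failure of subnormality are immediate from the choices of $L$ and $q$, completing the contrapositive and hence the proof.
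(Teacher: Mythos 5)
Your argument is correct and matches the paper's proof essentially step for step: the forward implication via the closedness of $M_T$ and Proposition \ref{sigmasubn}, and the converse by separating a sum of squares from $\ol M$, running the GNS construction on $\C[\z]$, using archimedeanness ($c-||\z||^2\in M$) to get bounded multiplication operators, and contradicting subnormality via $\bil{\psi_T(f)1}{1}=L(f)<0$. The only cosmetic difference is your (unnecessary but harmless) reduction to a single hermitian square $|q|^2$; the paper works directly with an arbitrary $f\in\Sigma\setminus\ol M$.
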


\begin{proof}
(SOS) $\To$ (S):
Let $T$ be a commuting tuple of operators with $M\subset M_T$. Since
$M_T$ is closed in $\R[\x,\y]$ (\ref{heredcalc}) we have $\ol M
\subset M_T$. So assumption (SOS) implies $\Sigma\subset M_T$, and
hence $T$ is subnormal according to Lemma \ref{sigmasubn}.

(S) $\To$ (SOS):
Let $M$ be archimedean and have property (S). Assume that there is
$f\in\Sigma$ with $f\notin\ol M$. There exists a linear functional
$L\colon\R[\x,\y]\to\R$ satisfying $L|_M\ge0$ and $L(f)<0$.
We extend $L$ to a complex linear functional on $\R[\x,\y]\otimes_\R
\C=\C[\z,\ol\z]$ and perform a GNS construction: Consider the
positive semi-definite inner product $\bil pq:=L(pq^*)$ on $\C[\z]$,
and let $E$ be the corresponding Hilbert space completion of
$\C[\z]$. Since $M$ is archimedean, there is a real number $c>0$ with
$c-|z_j|^2\in M$ for $j=1,\dots,n$, and it follows for any $p\in
\C[\z]$ that
$$L(|z_j|^2\,|p(\z)|^2)\>\le\>c\cdot L(|p(\z)|^2).$$
Hence multiplication by $z_j$ induces a bounded linear operator $T_j$
on $E$ ($j=1,\dots,n$), and $T=(T_1,\dots,T_n)$ is a commuting tuple
in $B(E)$. For $g\in\C[\z,\ol\z]$ and $q\in\C[\z]$ we have $\langle
\psi_T(g)q,\,q\rangle=L(g|q|^2)$.
So for $g\in M$ the operator $\psi_T(g)$ is nonnegative, which means
$M\subset M_T$. On the other hand, $f\notin M_T$ since $\bil
{\psi_T(f)1}1=L(f)<0$. Therefore, the tuple $T$ is not subnormal,
according to Proposition \ref{sigmasubn}. This contradicts
property~(S).
\end{proof}

\begin{lem}\label{1aTo2a}%
For any hermitian module $M$ we have $(\rm SMP)$ $\To$ $(\rm SOS)$.
\end{lem}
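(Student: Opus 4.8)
The plan is to argue by contradiction, separating a putative sum of squares from $\ol M$ by a linear functional, exactly in the spirit of the converse directions in Lemmas~\ref{2.2} and~\ref{surpris}. So I would suppose that $M$ has property $(\mathrm{SMP})$ but $\Sigma\not\subset\ol M$, and fix $f\in\Sigma$ with $f\notin\ol M$, aiming to derive a contradiction.

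First I would record that $M$, being a hermitian module, is a convex cone in $\R[\x,\y]$ containing $0$: it is closed under addition, and closed under multiplication by nonnegative reals since $\R_{\ge0}\subset\Sigma_h$. Hence $\ol M$ is a closed convex cone. Since $\R[\x,\y]$ carries the finest locally convex topology, every linear functional on it is continuous, so the Hahn--Banach separation theorem (cf.\ the proof of Lemma~\ref{2.2}, which invokes Eidelheit's theorem \cite{Ko}) provides a linear functional $L\colon\R[\x,\y]\to\R$ and a constant $c\in\R$ with $L(f)<c\le L(g)$ for all $g\in\ol M$. As $0\in\ol M$ we get $c\le0$, and since $\ol M$ is a cone, replacing $g$ by $tg$ and letting $t\to\infty$ forces $L(g)\ge0$ for every $g\in\ol M$; in particular $L|_M\ge0$ and $L(f)<0$.

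Next I would invoke property $(\mathrm{SMP})$ directly: since $L|_M\ge0$, there is a positive Borel measure $\mu$ on $X_M$ with $L(h)=\int_{X_M}h\,d\mu$ for all $h\in\R[\x,\y]$. But $f$ is a sum of squares, hence $f\ge0$ pointwise on $\C^n$, and \emph{a fortiori} on $X_M$. Therefore $L(f)=\int_{X_M}f\,d\mu\ge0$, contradicting $L(f)<0$. This contradiction shows $\Sigma\subset\ol M$, i.e.\ $(\mathrm{SMP})\To(\mathrm{SOS})$.

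I do not expect a genuine obstacle here; the only points worth double-checking are that the finest locally convex topology makes every linear functional continuous (so that the separation step genuinely yields an $L$ with $L|_M\ge0$), that $\ol M$ is a cone so the separating functional is automatically nonnegative on it, and that every element of $\Sigma$ is nonnegative on all of $\C^n$, hence on the set $X_M$ that carries the representing measure.
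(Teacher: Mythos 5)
Your proof is correct and is essentially the paper's argument: the paper applies (SMP) to every functional $L$ with $L|_M\ge0$ to get $L(f)\ge0$ for $f\in\Sigma$ and concludes $f\in\ol M$, leaving the Hahn--Banach/biduality step implicit, while you simply spell that separation step out in contrapositive form. No gap; the points you flag (continuity of all functionals in the finest locally convex topology, $\ol M$ being a closed convex cone, and $\Sigma$ consisting of globally nonnegative polynomials) all hold.
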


\begin{proof}
By hypothesis, any linear functional $L\colon\R[\x,\y]\to\R$ with
$L|_M\ge0$ is integration with respect to a measure on $X_M$. In
particular, $L(f)\ge0$ for any $f\in\Sigma$. This implies $f\in
\ol M$.
\end{proof}

\begin{rems}
\hfil\smallskip

1.\
The implication (S) $\To$ (SOS) for archimedean $M$ (Lemma
\ref{surpris}) is uninteresting if $M$ is a module over an
archimedean semiring. Indeed, in this case we know anyway that $M$
contains all polynomials strictly positive on $X_M$,
and hence $\Sigma\subset\ol M$ is clear. But in the other cases,
the equivalence of (S) and (SOS) is a new information.
\smallskip

2.\
Altogether we have now obtained the chain of implications
$$(\rm Q)\ \To\ (\rm SMP)\ \To\ (\rm SOS)\ \To\ (\rm S)$$
for any hermitian module $M$. When $M$ is archimedean, the first and
the last implication can be reversed.
\end{rems}

\begin{lab}
Under a stronger condition on $M$, we are now going to prove the
implication (SOS) $\To$ (SMP), and hence the equivalence of (Q)
and (S), when $M$ is archimedean. Recall that a closed subset
$K\subset\C^n$ is said to be \emph{polynomially convex} if the
following holds: For every $\xi\in\C^n$ with $\xi\notin K$, there
exists a polynomial $p\in\C[\z]$ with $|p|\le1$ on $K$ and $|p(\xi)|
>1$. We shall consider the following property for a hermitian module
$M\subset\R[\x,\y]$:
\begin{itemize}
\item[(PC)]
For every $\xi\in\C^n\setminus X_M$, there exist $f\in M$ and $q\in
\Sigma_h$ such that $q\le1$ on $\{a\in\C^n\colon f(a)\ge0\}$, and
such that $q(\xi)>1$.
\end{itemize}
\end{lab}

\begin{rems}
\hfil
\smallskip

1.\
If $M$ satisfies condition $(\rm PC)$, then the set $X_M$ is
polynomially convex in $\C^n$. Indeed, $X_M$ has the form
$$X_M\>=\>\bigcap_\nu\{a\in\C^n\colon q_\nu(a)\le1\}$$
for some family of polynomials $q_\nu\in\Sigma_h$. For each $\nu$,
the set $\{a\in\C^n\colon q_\nu(a)\le1\}$ is polynomially convex,
since it is the preimage of the closed unit ball in some $\C^m$
under a polynomial map $\C^n\to\C^m$. Therefore $X_M$ is
polynomially convex.
\smallskip

2.\
Condition (PC) is satisfied when the hermitian module $M$ is
generated by polynomials of the form $1-q_\nu$ with $q_\nu\in
\Sigma_h$.
More generally, (PC) holds when $M$ contains a family $\{f_\nu\}$
of polynomials with $X_M=\bigcap_\nu\{a\in\C^n\colon f_\nu(a)\ge0\}$
such that, for every $\nu$, the set $\{f_\nu\ge0\}$ is polynomially
convex.
\end{rems}

\begin{thm}\label{conversepolyconv}%
Let $M$ be an archimedean hermitian module in $\R[\x,\y]$ which
satisfies condition $(\rm PC)$. Then the subnormality property
$(\rm S)$ implies the Quillen property $(\rm Q)$ for~$M$.
\end{thm}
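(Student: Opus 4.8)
The plan is to prove Theorem~\ref{conversepolyconv} by deriving the chain of implications already available for archimedean hermitian modules. We have established in the preceding lemmas that $(\mathrm{Q})\To(\mathrm{SMP})\To(\mathrm{SOS})\To(\mathrm{S})$ always, and that $(\mathrm{S})\To(\mathrm{SOS})$ and $(\mathrm{SMP})\To(\mathrm{Q})$ hold when $M$ is archimedean. So the only missing link needed to close the loop and obtain $(\mathrm{S})\To(\mathrm{Q})$ is the implication $(\mathrm{SOS})\To(\mathrm{SMP})$. Thus the entire content of the theorem is: \emph{if $M$ is archimedean and satisfies $(\mathrm{PC})$, then $(\mathrm{SOS})$ implies $(\mathrm{SMP})$.}

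First I would take a linear functional $L\colon\R[\x,\y]\to\R$ with $L|_M\ge0$ and $L(1)=1$; by the archimedean Positivstellensatz-type reasoning (or rather, since $(\mathrm{SOS})$ gives $\Sigma\subset\ol M$, so $L|_\Sigma\ge0$) together with the archimedean hypothesis, $L$ is a continuous positive functional on $\R[\x,\y]$ in the appropriate topology; by the solution of the multivariate moment problem for sums of squares (e.g.\ via Haviland/Riesz and the density argument from Lemma~\ref{2.2}), $L$ is represented by a positive Borel measure $\mu$ on $\C^n$. The measure $\mu$ is a priori supported on the (possibly larger) set $\{a:\all f\in\Sigma\ f(a)\ge0\}=\C^n$, or more precisely on the set where all polynomials nonnegative $\mu$-a.e.\ are nonnegative; I need to push its support into $X_M$. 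Since $L|_M\ge0$, for each $f\in M$ we have $\int f\,d\mu=L(f)\ge0$, but this alone does not force $\supp\mu\subset\{f\ge0\}$; this is exactly the subtlety flagged in Example~\ref{pssarchhermmod}, and it is where $(\mathrm{PC})$ enters.

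The key step — and the main obstacle — is using $(\mathrm{PC})$ to show $\supp\mu\subset X_M$. Fix $\xi\notin X_M$; by $(\mathrm{PC})$ there are $f\in M$ and $q\in\Sigma_h$ with $q\le1$ on $K_f:=\{a:f(a)\ge0\}$ and $q(\xi)>1$. Writing $q=\sum_j|p_j(\z)|^2$ with $p_j\in\C[\z]$, the operator-theoretic/GNS machinery already developed (as in the proof of Lemma~\ref{surpris}) lets me build from $L$ a commuting tuple $T$ on the GNS Hilbert space $E$ with $M\subset M_T$, hence $\Sigma\subset M_T$ and $T$ subnormal by property $(\mathrm{S})$. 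Subnormality gives a normal extension $N$ on $E'\supset E$ with joint spectral measure $\nu$ supported on the joint spectrum; the constraints $\psi_T(f)\ge0$ for $f\in M$ translate, via the spectral theorem for $N$ and compression to $E$, into $\supp\nu\subset X_M$ (this is the standard fact that a subnormal tuple annihilated/dominated by the relations of $M$ has spectrum in $X_M$, using that $K_f$ is polynomially convex so holomorphic functional calculus sees the whole spectrum). Then $\mu$, being the scalar spectral measure $\langle\nu(\cdot)\,1,1\rangle$ pushed down, is supported in $X_M$, so $L$ is integration against a measure on $X_M$, which is precisely $(\mathrm{SMP})$.

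To write this cleanly I would: (i) invoke Lemma~\ref{1aTo2a}, Lemma~\ref{surpris}, and Lemma~\ref{2.2} to reduce to proving $(\mathrm{SOS})\To(\mathrm{SMP})$ for archimedean $M$ with $(\mathrm{PC})$; (ii) given $L$ with $L|_M\ge0$, run the GNS construction exactly as in Lemma~\ref{surpris} to get a commuting tuple $T$ on $E$ with $M\subset M_T$, and conclude via $(\mathrm{S})$ that $T$ is subnormal; (iii) take a minimal normal extension $N=(N_1,\dots,N_n)$ on $E'$ with joint spectral measure $E_N$, and argue that $\supp E_N\subset X_M$: for any $f\in M$, $\psi_T(f)\ge0$ together with polynomial convexity of the sets $\{f\ge0\}$ (guaranteed by $(\mathrm{PC})$ via the first Remark after the definition of $(\mathrm{PC})$) forces the spectrum of $N$ into $\{f\ge0\}$, using that $q(N)=\psi_{T}(q)$-compatibility and $\|q(N)\|\le1$ whenever $q\le1$ on the relevant set — here the hereditary calculus identity $\psi_T(q)=\sum p_j(T)^*p_j(T)$ and $q(N)=\sum p_j(N)^*p_j(N)$, compressed back to $E$, give $\langle q(N)x,x\rangle=\langle\psi_T(q)\pi x,\pi x\rangle$ for $x\in E$, and one concludes $\|q(N)\|\le1$, hence $\supp E_N\subset\{q\le1\}$, and intersecting over the family from the Remark yields $\supp E_N\subset X_M$; (iv) set $\mu:=\langle E_N(\cdot)\,1,1\rangle$, a probability measure on $X_M$ with $\int g\,d\mu=\langle\psi_T(g)1,1\rangle=L(g)$ for all $g\in\C[\z,\ol\z]$, which gives $(\mathrm{SMP})$ and completes the proof. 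The step I expect to be delicate is (iii), pinning down $\supp E_N\subset X_M$ rather than merely $\supp E_N\subset\{f\ge0\}$ "on average" — this is exactly where polynomial convexity is indispensable, since without it the unit-ball trick in Example~\ref{pssarchhermmod} shows the conclusion can fail.
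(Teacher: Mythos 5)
Your overall skeleton agrees with the paper's proof up to the last step: reduce via Lemma~\ref{2.2} to proving (SMP), run the GNS construction on a functional $L$ with $L|_M\ge0$ (this needs only $\Sigma_h\subset M$ and archimedeanness, so your detour through (SOS) is unnecessary but harmless), invoke (S) to conclude the GNS tuple $T$ is subnormal, and let the spectral measure of a normal extension, evaluated at the cyclic vector, produce a measure $\mu$ on $\C^n$ with $\int f\,d\mu=L(f)$. The genuine gap is in your step (iii), the localization $\supp(\mu)\subset X_M$. Your mechanism is: $\psi_T(f)\ge0$ plus polynomial convexity of $\{f\ge0\}$ "forces" the spectrum of the normal extension $N$ into $\{f\ge0\}$, via a bound $\|q(N)\|\le1$ extracted from the compression identity $\bil{q(N)x}{x}=\bil{\psi_T(q)x}{x}$ for $x\in E$. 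This does not work. First, nothing gives you $\|\psi_T(q)\|\le1$: the statement "$q\le1$ on $\{f\ge0\}$ and $\psi_T(f)\ge0$ imply $\psi_T(q)\le1$" is precisely the kind of positivity-transfer the theorem is trying to establish, not something you may assume. Second, even with such a bound, the compression identity controls $\bil{q(N)y}{y}$ only for $y\in E$, not for all $y$ in the extension space, so it yields at best $\|\psi_T(q)\|\le\|q(N)\|$ — the inequality in the wrong direction. Moreover, the bare claim "$T$ subnormal and $\psi_T(f)\ge0$ for all $f\in M$ imply $\sigma(N)\subset X_M$" is false without a quantitative use of the pair $(f,q)$ from (PC): in Example~\ref{putex} (the annulus) the GNS tuple of the measure $\epsilon\delta_0+$ arc measure is subnormal and satisfies $M\subset M_T$, yet the spectrum of its minimal normal extension contains $0\notin X_M$. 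So polynomial convexity must enter through an actual argument, and your sketch never uses the joint data $(f,q)$ beyond invoking the Remark.

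What is missing is exactly the content of the paper's Lemma~\ref{supplemm}. Since $M$ is a $\Sigma_h$-module, $fq^m\in M$ for every $m$, hence $\int fq^m\,d\mu=L(fq^m)\ge0$. Suppose $\xi\in\supp(\mu)$ with $\xi\notin X_M$, and take $(f,q)$ as in (PC). Split $\int fq^m\,d\mu$ over $\{f\ge0\}$ and $\{f<0\}$: on $\{f\ge0\}$ one has $0\le fq^m\le f$, so the first term stays bounded (dominated convergence), while on a small ball around $\xi$ of positive $\mu$-mass one has $q\ge a>1$ and $f\le b<0$, so the second term tends to $-\infty$; for large $m$ the total is negative, a contradiction. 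Applying this to the scalar measure $\mu=\bil{E_N(\cdot)\xi}{\xi}$ is all that (SMP) requires — you do not need $\supp E_N\subset X_M$, which is a stronger statement demanding extra work. As written, your proof asserts the crucial support inclusion but does not prove it, so the argument is incomplete at its decisive point.
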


\begin{proof}
By Lemma \ref{2.2} it suffices to show that $M$ has the strong moment
property (SMP). Given a linear functional $L\colon\R[\x,\y]\to\R$
with $L|_M\ge0$, we need to show that $L$ is integration with respect
to a positive Borel measure supported on $X_M$. Similar to the proof
of Lemma \ref{surpris}, we use a GNS construction to get a Hilbert
space $E$ together with a commuting tuple $T=(T_1,\dots,T_n)$ in
$B(E)$ and a cyclic vector $\xi$, such that $L(pq^*)=\bil{p(T)\xi}
{q(T)\xi}$ for all $p,q\in\C[\z]$.
Since $M$ has property (S), the tuple $T$ is subnormal.
The spectral
measure of a commuting normal extension $S$ of $T$ gives a Borel
measure $\mu$ on $\C^n$ with
$$L(f)\>=\>\bil{\psi_S(f)\xi}\xi\>=\>\int f\>d\mu$$
for all $f\in\R[\x,\y]$.
It remains to prove $\supp(\mu)\subset X_M$, which follows from the
next lemma.
\end{proof}

\begin{lem}\label{supplemm}%
Let $M$ be a hermitian module, and let $\mu$ be a
positive measure on $\C^n$ all of whose moments exist, satisfying
$\int f\,d\mu\ge0$ for every $f\in M$. If $M$ satisfies condition
$(\rm PC)$, then $\supp(\mu)\subset X_M$.
\end{lem}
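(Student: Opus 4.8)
The plan is to show that any point $\xi \in \C^n \setminus X_M$ lies outside $\supp(\mu)$, using condition (PC) exactly as it was used to establish polynomial convexity of $X_M$ in the preceding remark. So fix $\xi \notin X_M$; by (PC) there are $f \in M$ and $q \in \Sigma_h$ with $q \le 1$ on the set $K := \{a \in \C^n \colon f(a) \ge 0\}$ and $q(\xi) > 1$. Since $f \in M$, the hypothesis gives $\int f \, d\mu \ge 0$. The key point will be to deduce from this that $\mu$ is supported on $K$, or at least ``essentially'' there, and then to exploit $q \le 1$ on $K$ together with $q(\xi) > 1$ and the continuity of $q$ to produce a neighborhood of $\xi$ of zero $\mu$-measure.

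The first step is to control the support relative to $K$. Here I would iterate the inequality: for every $p \in \C[\z]$ we have $|p|^2 f \in M$ (since $M$ is a $\Sigma_h$-module and $|p|^2 \in \Sigma_h$), hence $\int |p|^2 f \, d\mu \ge 0$. Writing $d\nu = f \, d\mu$ (a signed measure with all moments finite), this says $\int |p|^2 \, d\nu \ge 0$ for all holomorphic polynomials $p$, and more generally $\int g \, d\nu \ge 0$ for every $g \in \Sigma_h$; since every real polynomial is a difference of two elements of $\Sigma$ and $\Sigma \subset \ol{\Sigma_h}$ is \emph{not} available in general, I would instead argue directly that the negative part of $\nu$ is supported where $f \le 0$: on the open set $\{f < 0\}$ one can approximate, but cleaner is to note $\int h^2 f \, d\mu \ge 0$ need not force this. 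The robust route: apply the hypothesis to $f$ itself multiplied by an approximation of the indicator of a small ball around any point $a_0$ with $f(a_0) < 0$ — but polynomials can't do that. So the genuinely correct argument is the standard one: since $\int |p|^2 f \, d\mu \ge 0$ for all $p \in \C[\z]$, the functional $p \mapsto \int |p|^2 f \, d\mu$ is a positive semidefinite Hankel-type form, and by a Riesz-type representation (or by decomposing $f\,d\mu$ via Hahn) the part of $\mu$ on $\{f < 0\}$ must vanish against all such forms, forcing $\supp(\mu) \subset \{f \ge 0\} = K$. I expect establishing $\supp(\mu)\subset K$ rigorously to be the main obstacle, and I would look to earlier literature (e.g.\ a Positivstellensatz/localization argument: $f \ge 0$ $\mu$-a.e.\ because $\int f\,d\mu \ge 0$ and $\int f \, p^2 \, d\mu \ge 0$ for all polynomials $p$, which by density of polynomials in $L^2(|f|\,d\mu)$-type spaces gives $f \ge 0$ a.e., assuming $\mu$ is determinate enough; compactness is not assumed here, so some care with moment-determinacy is needed — alternatively one restricts attention to the determinate case or invokes that $|p|^2 f \in M$ for all $p$ already pins down $\supp\mu \subset K$ directly via the GNS/spectral picture in the application).

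Once $\supp(\mu) \subset K$ is in hand, the rest is immediate: $q \le 1$ holds on $K \supset \supp(\mu)$, so $q \le 1$ $\mu$-almost everywhere. Since $q$ is continuous and $q(\xi) > 1$, there is an open neighborhood $\Omega$ of $\xi$ on which $q > 1$; then $\mu(\Omega) = 0$, so $\xi \notin \supp(\mu)$. As $\xi \in \C^n \setminus X_M$ was arbitrary, $\supp(\mu) \subset X_M$, proving the lemma.
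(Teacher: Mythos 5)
There is a genuine gap, and it sits exactly where you predicted: the intermediate claim $\supp(\mu)\subset K=\{f\ge0\}$. As you set it up, that claim is to be extracted from the positivity $\int|p|^2f\,d\mu\ge0$ for all $p\in\C[\z]$ (equivalently $\int gf\,d\mu\ge0$ for $g\in\Sigma_h$) \emph{alone}, without using (PC) at that stage. But this implication is false in general: the paper's own Example \ref{putex} is a counterexample. There $f=|z|^2-r^2\in M$, and $\mu=\epsilon\delta_0+$ arc-length on the circle of radius $\rho$ satisfies $\int|p|^2f\,d\mu\ge0$ for every $p\in\C[z]$, yet $0\in\supp(\mu)$ while $f(0)<0$. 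So no Hankel-form, density, or determinacy argument can rescue the step in the stated generality (the measure need not even be compactly supported, and the hypotheses are only positivity of moments against $M$). Your own hedging (``assuming $\mu$ is determinate enough'') flags the problem but does not resolve it; the decomposition of the proof into ``first localize $\mu$ on $K$, then use $q$'' cannot work, because (PC) must enter \emph{before} any localization is possible.

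The paper's proof avoids this entirely by combining $f$ and $q$ from the start: assuming $\xi\in\supp(\mu)\setminus X_M$, it considers $fq^m\in M$ (legitimate since $q^m\in\Sigma_h$) and splits $\int fq^m\,d\mu$ over $\{f\ge0\}$ and $\{f<0\}$. On $\{f\ge0\}$ one has $0\le q\le1$, so that contribution stays bounded (indeed tends to $0$ by dominated convergence when $q<1$ there), while on a small ball $B$ around $\xi$ one has $f\le b<0$ and $q\ge a>1$ with $\mu(B)>0$, so the contribution from $\{f<0\}$ is at most $b\,a^m\mu(B)\to-\infty$; for large $m$ this forces $\int fq^m\,d\mu<0$, contradicting $fq^m\in M$. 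The essential idea your proposal is missing is precisely this: use the \emph{powers} of the (PC)-witness $q$ multiplied by $f$ as test elements of $M$, so that the exponential growth of $q^m$ at the bad point beats the bounded (nonnegative) contribution from the region where $f\ge0$. With that device no a priori localization of $\supp(\mu)$ is needed; your final step (continuity of $q$ and $q(\xi)>1$) becomes unnecessary as well.
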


\begin{proof}
Assume there exists $\xi\in\supp(\mu)$ with $\xi\notin X_M$. By
(PC) we find $f\in M$ and $q\in\Sigma_h$ such that $q(\xi)>1$ and
$q<1$ on $\{f\ge0\}$. We have
$$\int fq^m\,d\mu\>=\>\int_{\{f\ge0\}}fq^m\,d\mu+\int_{\{f<0\}}fq^m\,
d\mu.$$
For $m\to\infty$, the first summand on the right tends to zero by the
dominated convergence theorem.
On the other hand, the second summand tends to $-\infty$: Consider a
small ball $B$ around $\xi$ on which $q\ge a>1$ and $f\le b<0$, and
note that $B$ has positive $\mu$-measure.
So there is $m\in\N$ for which the integral on the left is negative,
a contradiction since $fq^m\in M$.
\end{proof}

Summarizing Lemmas \ref{2.2}--\ref{1aTo2a} and Proposition
\ref{conversepolyconv}, we obtain:

\begin{thm}
Let $M$ be a hermitian module in $\R[\x,\y]$ which is archimedean
and satisfies condition $(\rm PC)$. Then for $M$ we have
$$(\rm Q)\ \iff\ (\rm SMP)\ \iff\ (\rm SOS)\ \iff\ (\rm S).
\eqno\square$$
\end{thm}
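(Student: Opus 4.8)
The plan is to close a cycle of implications, most of which are already in place. For \emph{every} hermitian module $M$ the three arrows
$$(\rm Q)\ \To\ (\rm SMP)\ \To\ (\rm SOS)\ \To\ (\rm S)$$
hold unconditionally: the first is Lemma \ref{2.2}, the second is Lemma \ref{1aTo2a}, and the third is the forward direction of Lemma \ref{surpris}. So it suffices to establish the single return arrow $(\rm S)\To(\rm Q)$, after which all four properties are equivalent, since any cyclic chain of implications forces all the statements in it to be equivalent.

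This missing arrow is exactly Theorem \ref{conversepolyconv}: if $M$ is archimedean and satisfies $(\rm PC)$ — which are precisely the standing hypotheses here — then $(\rm S)$ implies $(\rm Q)$. Thus the entire proof amounts to the observation that the four conditions lie on the cycle $(\rm Q)\To(\rm SMP)\To(\rm SOS)\To(\rm S)\To(\rm Q)$, the last step being Theorem \ref{conversepolyconv} and the first three being the lemmas just cited. Note that the archimedean hypothesis enters only through Theorem \ref{conversepolyconv}; in particular I do not need the (conditional) converse implications of Lemmas \ref{2.2} and \ref{surpris} separately, although they could equally well be used to organize the argument as: under archimedeanness Lemma \ref{2.2} gives $(\rm Q)\iff(\rm SMP)$ and Lemma \ref{surpris} gives $(\rm S)\iff(\rm SOS)$, Lemma \ref{1aTo2a} links the two pairs one way, and Theorem \ref{conversepolyconv} links them the other.

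There is essentially no obstacle remaining at this stage: the statement is a repackaging of Lemmas \ref{2.2}--\ref{1aTo2a} together with Theorem \ref{conversepolyconv}. The genuine difficulty — producing, from a functional $L\ge0$ on $M$, a commuting tuple $T$ with cyclic vector via the GNS construction, invoking property $(\rm S)$ to make $T$ subnormal, and then using condition $(\rm PC)$ through Lemma \ref{supplemm} to confine the spectral measure of a commuting normal extension to $X_M$ — has already been carried out in the proof of Theorem \ref{conversepolyconv}. Hence the proof of the present theorem is a one-line assembly of the preceding results.
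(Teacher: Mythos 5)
Your proposal is correct and is essentially identical to the paper's own argument: the theorem is stated there as a summary of Lemmas \ref{2.2}, \ref{surpris}, \ref{1aTo2a} and Theorem \ref{conversepolyconv}, closing the same cycle $(\mathrm{Q})\To(\mathrm{SMP})\To(\mathrm{SOS})\To(\mathrm{S})\To(\mathrm{Q})$ with the last arrow supplied by the archimedean and $(\mathrm{PC})$ hypotheses. Nothing is missing.
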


\begin{example}\label{putex}%
Without any hypothesis like (PC), the implication (S) $\To$ (Q) is
false, even if we assume that $M$ is archimedean. Indeed, let $n=1$
and $0<r<R$, and consider the $\Sigma_h$-module
$$M\>=\>\Sigma+\Sigma_h(R^2-|z|^2)+\Sigma_h(|z|^2-r^2).$$
Here $X_M$ is the annulus around the origin with radii $r<R$.
Clearly, $M$ is archimedean (\ref{archhermmod}) and satisfies
condition (S) (Lemma \ref{surpris}).
But there exists a compactly supported measure $\mu$ on $\C$ with
$\supp(\mu)\not\subset X_M$ and with
$$\int f(z)\,\mu(dz)\>\ge\>0$$
for every $f\in M$.
Namely, let $r<\rho<R$, and let
$$\int f\,d\mu\>:=\>\epsilon f(0)+\int_{-\pi}^\pi f(\rho e^{it})
\,dt.$$
When $\epsilon>0$ is sufficiently small, we have $\int(|z|^2-r^2)
|p(z)|^2\,d\mu\ge0$ for every $p\in\C[z]$, and hence $\int f(z)\,
\mu(dz)\ge0$ for every $f\in M$. Namely, the integral is
$$(\rho^2-r^2)\int_{-\pi}^\pi|p(\rho e^{it})|^2\,dt-\epsilon r^2
|p(0)|^2\>\ge\>\bigl(2\pi(\rho^2-r^2)-\epsilon r^2\bigr)|p(0)|^2.$$
Note that the annulus $X_M$ is not polynomially convex, and so the
hypotheses of Theorem \ref{conversepolyconv} are not satisfied.
\end{example}

\begin{thm}\label{ausbau}%
Let $M\subset\R[\x,\y]$ be a finitely generated hermitian module. If
$M$ satisfies $(\rm Sf)$, then the semi-algebraic set $X_M$ in $\C^n$
does not contain an analytic disc.
\end{thm}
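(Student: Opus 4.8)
The plan is to prove the contrapositive: from an analytic disc in $X_M$ I will build a commuting, non-normal tuple $T$ of $2\times2$ matrices satisfying $p(T,T^*)\ge0$ for every $p\in M$, contradicting $(\rm Sf)$. So suppose there is a non-constant holomorphic map $\phi\colon\D\to\C^n$ with $\phi(\D)\subset X_M$. Since $\phi$ is non-constant, $\phi'$ is not identically zero, and after composing $\phi$ with a M\"obius automorphism of $\D$ I may assume $v:=\phi'(0)\ne0$. Put $a:=\phi(0)\in X_M$. Using that $M$ is finitely generated, fix $p_1,\dots,p_k\in\R[\x,\y]$ with $M=\Sigma_h+p_1\Sigma_h+\cdots+p_k\Sigma_h$.

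The tuple I use is the first-order jet of $\phi$ at~$0$, modelled on Example~\ref{keyex} with $r=1$: let $N=\begin{pmatrix}0&0\\1&0\end{pmatrix}\in\M_2(\C)$ and set $T_j:=a_jI_2+\epsilon\,v_j\,N$ for a small parameter $\epsilon>0$ to be fixed below. As $N^2=0$, the $T_j$ commute, and since $v\ne0$ at least one $T_j=a_jI_2+\epsilon v_jN$ is non-normal, so $T$ is not normal. From $h(T)=h(a)I_2+\epsilon\,\bigl(v\cdot\nabla_\z h\bigr)(a)\,N$ for $h\in\C[\z]$ (and the adjoint statement), a short computation in the spirit of~\ref{keyex} gives, for every $p=p^*\in\C[\z,\ol\z]$,
\[
\psi_T(p)\>=\>\begin{pmatrix}p(a)+\epsilon^2\,\mathcal L_v p(a)&\epsilon\,\overline{(v\cdot\nabla_\z p)(a)}\\[2pt]\epsilon\,(v\cdot\nabla_\z p)(a)&p(a)\end{pmatrix},
\]
where $\mathcal L_v p(a)=\sum_{j,k}v_j\ol v_k\,\partial_{z_j}\partial_{\ol z_k}p(a)$ is the Levi form of~$p$ at~$a$ in the direction~$v$; for $p=p^*$ the diagonal entries are real and the off-diagonal entries are conjugate. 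Since $\psi_T(|q|^2p)=q(T)^*\psi_T(p)\,q(T)$, one has $M\subset M_T$ as soon as $\psi_T(p_i)\ge0$ for $i=1,\dots,k$.

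The key point is that the disc supplies exactly the information needed to verify these $k$ inequalities. For each~$i$ the function $u_i:=p_i\circ\phi$ is smooth on~$\D$ and $u_i\ge0$, with $u_i(0)=p_i(a)\ge0$ (as $a\in X_M$). If $p_i(a)=0$, then $0$ is a global minimum of $u_i$, so $\partial_\zeta u_i(0)=0$ and $\partial_\zeta\partial_{\ol\zeta}u_i(0)\ge0$; because $\phi$ is holomorphic (so $\overline{\phi_j(\zeta)}$ is antiholomorphic), the chain rule turns these into $(v\cdot\nabla_\z p_i)(a)=0$ and $\mathcal L_v p_i(a)\ge0$, whence $\psi_T(p_i)=\diag\bigl(\epsilon^2\mathcal L_v p_i(a),\,0\bigr)\ge0$ for every $\epsilon$. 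If instead $p_i(a)>0$, then $\psi_T(p_i)$ has positive diagonal and determinant $p_i(a)^2+\epsilon^2\bigl(p_i(a)\mathcal L_v p_i(a)-|(v\cdot\nabla_\z p_i)(a)|^2\bigr)$, which is positive once $\epsilon$ is small. Choosing a single $\epsilon>0$ small enough for all of the finitely many generators $p_i$ with $p_i(a)>0$ (here finite generation of $M$ is used), we obtain $\psi_T(p_i)\ge0$ for all~$i$, hence $M\subset M_T$, while $T$ is a commuting non-normal tuple of matrices: this contradicts $(\rm Sf)$ and proves the theorem.

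I expect the only real work to lie in the two displayed computations: deriving the matrix form of $\psi_T(p)$ for the jet tuple (including the self-adjointness relation $\overline{(v\cdot\nabla_\z p)(a)}=(\ol v\cdot\nabla_{\ol\z}p)(a)$ for $p=p^*$), and identifying $\partial_\zeta u_i(0)$ and $\partial_\zeta\partial_{\ol\zeta}u_i(0)$ with the holomorphic directional derivative and the Levi form of $p_i$ at~$a$. Both are elementary chain-rule bookkeeping; the conceptual content is simply that positivity of $p_i$ along the disc, together with vanishing at its centre, kills the first jet of $p_i$ at $a$ in the direction $v$ and makes its Levi form nonnegative, which is precisely what the $2\times2$ positivity test requires.
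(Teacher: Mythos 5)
Your proof is correct, but it follows a genuinely different route from the paper's. The paper substitutes an arbitrary non-normal matrix $A$ of small norm into the whole disc map, forming the tuple $\phi(A)$ by the analytic functional calculus; non-normality of $\phi(A)$ is deduced from a local left inverse $F$ of $\phi$ via the superposition property, and positivity of the generators on $M_{\phi(A)}$ is obtained by splitting them into those vanishing identically on $\phi(\D)$ (which map to $0$ as hereditary power series) and those positive at the centre after shrinking the disc (which map to positive operators by continuity of the calculus). You instead use only the first-order jet of $\phi$ at an immersive point: the explicit $2\times2$ nilpotent perturbation $T_j=a_jI_2+\epsilon v_jN$, whose hereditary image you compute in closed form, and you verify $\psi_T(p_i)\ge0$ generator by generator, using for $p_i(a)=0$ the first- and second-order conditions at an interior minimum of $p_i\circ\phi$ (which correctly give $(v\cdot\nabla_\z p_i)(a)=0$ and $\mathcal L_vp_i(a)=\partial_\zeta\partial_{\ol\zeta}(p_i\circ\phi)(0)\ge0$), and for $p_i(a)>0$ a small-$\epsilon$ determinant estimate, with finite generation supplying a uniform $\epsilon$. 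Your computation of $\psi_T(p)$ checks out, the $\Sigma_h$-module property of $M_T$ from \ref{heredcalc} reduces everything to the generators as you say, and the reparametrization making $\phi'(0)\ne0$ is legitimate (the paper implicitly needs the same reduction to obtain its local inverse $F$). In effect you have extended the jet argument of Remark \ref{remausbauideal} --- which the paper only records for modules of the form $\Sigma_h+I$, via the ideals $J(a,U)$ of Theorem \ref{umker} --- to arbitrary finitely generated hermitian modules. What your approach buys is elementarity and explicitness: no functional calculus, no local inverse, no shrinking of the disc, and matrices of fixed size $2$; what the paper's approach buys is that it exploits the full disc rather than its $1$-jet and exhibits a whole family of non-normal witnesses $\phi(A)$, at the cost of invoking the holomorphic functional calculus and its continuity and composition properties.
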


\begin{proof}
By an analytic disc we mean the image of a nonconstant holomorphic
map $\phi\colon\D\to\C^n$. We can assume $\phi(0)=0$, and since we
work locally, we can assume that there exists an analytic function
$F\colon\phi(\D)\to\D$ such that $F(\phi(\zeta))=\zeta$ ($\zeta
\in\D$). By assumption, $\phi(\D)\subset X_M$. Let $M$ be generated
by nonzero polynomials $f_1,\dots,f_r\in\R[\x,\y]$. We reorder the
generators so that $f_1,\dots,f_s$ vanish identically on $\phi(\D)$,
and $f_{s+1},\dots,f_r$ have only isolated zeros on $\phi(\D)$. By
passing to an appropriate subdisc on $\D$ we can assume $f_{s+1}(0)
>0,\dots,f_r(0)>0$.

Let $\epsilon>0$ be sufficiently small, and choose a non-normal
matrix $A$ of norm $||A||\le\epsilon$. Then the commuting tuple of
matrices $\phi(A)$ is not normal, as $A=F(\phi(A))$ by the
superposition property of the analytic functional calculus. In
addition, $M\subset M_{\phi(A)}$. Indeed, fixing $1\le j\le s$, the
composite function $f_j(\varphi(z),\ol{\varphi(z)})$ is identically
zero. If
$$f_j(\z,\ol\z)\>=\>\sum_{\alpha,\beta}c_{\alpha,\beta}\,
\z^\alpha\ol\z^\beta,$$
this means that the power series
$$\sum_{\alpha,\beta}c_{\alpha,\beta}\,\varphi_1(z)^{\alpha_1}\cdots
\varphi_n(z)^{\alpha_n}\,\ol{\varphi_1(z)}^{\beta_1}\cdots
\ol{\varphi_n(z)}^{\beta_n}$$
in $z$ und $\ol z$ is identically zero, from which we see $f_j
(\phi(A))=0$.
On the other hand, $f_{s+1}(\phi(A))>0,\dots,f_r(\phi(A))>0$ by the
continuity of the functional calculus.

Since $\phi(A)$ is not (sub-) normal, this implies that $M$ does
not satisfy~(Sf).
\end{proof}

\begin{rem}\label{remausbauideal}%
In the case where $M=\Sigma_h+I$ for some ideal $I\subset\R[\x,\y]$,
Theorem \ref{ausbau} also follows from Theorem \ref{umker}. Indeed,
assume that $\varphi\colon\D\to\C^n$ is a holomorphic map with
$\varphi(\D)\subset V_\R(I)$ and with $u:=\nabla_z\varphi(0)\ne0$. A
direct calculation shows that $I\subset J(a,U)$ (see \ref{dfnjau})
with $a:=\varphi(0)$ and $U:=u^*u$ (we consider $u$ as a row vector).
Therefore, the ideal $I$ does not satisfy condition (G), and by
Theorem \ref{umker}, it neither satisfies (Sf).
\end{rem}

By Corollary \ref{modules}, Theorem \ref{ausbau} implies:

\begin{cor}
Let $M$ be a finitely generated hermitian module. If $X_M$ contains
an analytic disc, then $M$ does not satisfy Quillen's property
$(\rm Q)$.
\qed
\end{cor}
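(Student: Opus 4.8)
The plan is to combine the two results that have just been established. By Corollary \ref{modules}, the Quillen property $(\rm Q)$ for a hermitian module $M$ implies the subnormality property $(\rm S)$; and trivially, property $(\rm S)$ (subnormality of \emph{all} commuting tuples of bounded operators annihilated by $M$ in the sense of $M\subset M_T$) implies property $(\rm Sf)$ (normality of all such tuples of matrices), since a subnormal operator on a finite-dimensional space is normal. Hence for a finitely generated hermitian module $M$ we have the chain $(\rm Q)\To(\rm S)\To(\rm Sf)$. Theorem \ref{ausbau} says that if $M$ is finitely generated and satisfies $(\rm Sf)$, then $X_M$ contains no analytic disc. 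Contrapositively, if $X_M$ does contain an analytic disc, then $M$ fails $(\rm Sf)$, and therefore $M$ fails $(\rm Q)$.

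Concretely, I would write: suppose $M$ is finitely generated and $X_M$ contains an analytic disc. If $M$ had property $(\rm Q)$, then by Corollary \ref{modules} it would have property $(\rm S)$, and \emph{a fortiori} property $(\rm Sf)$; but then Theorem \ref{ausbau} would force $X_M$ to contain no analytic disc, a contradiction. Hence $M$ does not satisfy $(\rm Q)$.

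There is really no obstacle here: the statement is a formal corollary, and the only thing to check is that the hypothesis ``$M$ finitely generated'' is exactly the hypothesis needed for Theorem \ref{ausbau}, which it is, and that the implication $(\rm Q)\To(\rm S)$ of Corollary \ref{modules} is stated for arbitrary hermitian modules, which it is. So the proof is a one-line chase through the already-proved implications, and the corollary can be marked with \qed as in the excerpt. If one wanted to be slightly more self-contained, one could also invoke Remark \ref{remausbauideal} in the special case $M=\Sigma_h+I$, but for general finitely generated $M$ the route through Theorem \ref{ausbau} and Corollary \ref{modules} is the one to take.
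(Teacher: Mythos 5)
Your proof is correct and follows essentially the same route as the paper: the paper's own argument also deduces from Theorem \ref{ausbau} that the subnormality condition fails when $X_M$ contains an analytic disc, and then invokes Corollary \ref{modules} ((Q) $\To$ (S)) to conclude that (Q) fails. Your explicit insertion of the trivial step (S) $\To$ (Sf) only makes precise what the paper leaves implicit, so there is nothing to add.
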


\begin{proof}
Let $M$ be generated by nonzero polynomials $f_1,\dots,f_r\in
\R[\x,\y]$. Since $X_M$ contains an analytic disc, the subnormality
condition (S) does not hold for $M$, by  Theorem \ref{ausbau} above.
According to Corollary \ref{modules}, the Positivstellensatz does not
hold either.
\end{proof}


\section{Historical comments}

We feel that leaving aside the analytic roots of the questions
encountered in this article would deprave the reader of some
essential insight. We briefly describe below some of the old sources
and applications of the decomposition of a real polynomial in a sum
of hermitian squares.

Start with Riesz-Fej\'er Theorem asserting that a polynomial
$p(z,\ol z)$ which is non-negative on the unit circle can be
decomposed as
\begin{equation}\label{1d-torus}
p(z,\ol z)\>=\>|h(z)|^2+(1-|z|^2)g(z,\ol z),
\end{equation}
where $h\in\C[z]$ and $g\in\C[z,\ol z]$.

Next we ``quantize'' the above setting, that is we replace the
complex variable by a linear transformation. Let $T$ be a bounded
linear operator acting on a Hilbert space $E$ and denote by $T^*$ its
adjoint. The simple operator identity
$$T^*T\>=\>\id$$
defines an isometric transformation, with the known consequences:
spectral picture, functional model and classification, see \cite{Co}.
In particular, the operator $T$ has in this case spectrum contained
in the closed unit disc $\ol\D$, and for every real valued polynomial
$p(z,\ol z)$ the estimate
\begin{equation}\label{harmonic}
p(T,T^*)\>\le\>\max_{\lambda\in\T}\,p(\lambda,\ol\lambda)\,\id
\end{equation}
holds true. Recall that here we adopt the hereditary calculus
convention, putting the powers of $T^*$ to the left of the powers
of $T$, in every monomial appearing in $p$.

Inequality \eqref{harmonic} is a simple consequence of
\eqref{1d-torus}: If $p(z,\ol z)\le M$ on $\T$, then
\begin{equation}\label{eq}%
M-p(T,T^*)\>=\>h(T)^*h(T)+[(1-|z|^2)g(z,\ol z)](T,T^*)\>=\>
h(T)^*h(T)\>\ge\>0.
\end{equation}
As a matter of fact, estimate \eqref{harmonic} implies that the
linear functional calculus $p(z,\ol z)\mapsto p(T,T^*)$ possesses an
additional positivity property. The latter implies, essentially
repeating F.~Riesz construction of the representing measure for a
positive functional, that the operator $T$ is subnormal, that is,
there exists a larger Hilbert space $E\subset K$ and a normal
operator $U$ acting on $K$, such that $U(E)\subset E$ and $U|_E=T$.
By choosing $U$ minimal with this property we can also assume that
the spectrum of $U$ is contained in the torus $\T$, hence $U$ is
unitary, see for instance \cite{Co}. In particular, if in addition
$TT^*=\id$, that is $T$ is unitary from the beginning, we obtain in
this manner a proof of the spectral theorem, as advocated by F.~Riesz
from the dawn of functional analysis \cite{R,RN}.

Turning now to several complex variables, or their quantized form,
commuting tuples of linear operators, we encounter Quillen's idea
\cite{Q}. Let $P(z,\ol z)$ be a conjugation-invariant polynomial,
bihomogeneous of the same degree in the variables $z$ and $\ol z$.
Assume that $P(z,\ol z)>0$ whenever $z\ne0$. Denote by $M=(M_{z_1},
\dots,M_{z_n})$ the $n$-tuple of commuting multipliers by the
complex variables, on the Bargmann-Fock space of entire functions
(square integrable in $\C^n$ with respect to the Gaussian weight).
Using analytical tools (elliptic estimates and Fredholm theory),
Quillen analyzes the positivity of the operator $P(M,M^*)$ inherited
from the positivity of the symbol $P$. He reaches the purely
algebraic conclusion that there exists a positive integer $N$ and
homogeneous complex analytic polynomials $h_1,\dots,h_k$ such that
\begin{equation}\label{homogeneous}
\|z\|^{2N}P(z,\ol z)\>=\>|h_1(z)|^2+\cdots+|h_k(z)|^2.
\end{equation}
Very recently Drout and Zworski \cite{DZ} have obtained, using the
same Bargmann-Fock space representation, degree bounds in Quillen's
decomposition above.

An elementary dehomogenization argument shows that
\eqref{homogeneous} implies that every positive polynomial on the
unit sphere of $\C^n$ is equal, on the sphere, to a sum of hermitian
squares, as stated by condition (Q) in our article.

On the abstract operator theory side, we mention the 1987 discovery
of Athavale \cite{At} stating that every commuting tuple of bounded
operators $T=(T_1,\dots,T_n)$ subject to the sphere identity
$$T_1^*T_1+\cdots+T_n^*T_n\>=\>\id$$
is subnormal, and hence possesses a functional calculus with a
positivity property of type \eqref{harmonic}. Athavale's work belongs
to a framework advocated for several dozen years by now by Conway
\cite{Co}, Agler and McCarthy \cite{AM} and their followers.

Quillen's theorem was rediscovered in 1996, generalized and put into
the context of Cauchy-Riemann geometry and function theory of several
complex variables by Catlin and D'Angelo \cite{CdA1}. Their proof
also uses analysis, this time employing analytic Toeplitz operators
acting on the Bergman space of the unit ball. One of the main themes
of research in Cauchy-Riemann geometry is the (local) classification
up to bi-holomorphic transformations of real algebraic subvarieties
of $\C^n$. There is no surprise that Quillen property, or better its
algebro-geometric consequences (Sf) and (G) are relevant for CR
manifold theory. A modest step into this direction was taken in
\cite{dAP2}.


\end{document}